\journal{Computational and Applied Mathematics }
\newtheorem{theorem}{Theorem}[section]
\newtheorem{corollary}[theorem]{Corollary}
\newtheorem{lemma}[theorem]{Lemma}
\newtheorem{proposition}[theorem]{Proposition}
\newtheorem{definition}[theorem]{Definition}
\newtheorem{remark}[theorem]{Remark}
\newtheorem{proof}{Proof}
\newcommand{\poubelle}[1]{}
\begin{document}

\begin{frontmatter}

\title{Exploring Well-Posedness and Asymptotic Behavior in an Advection-Diffusion-Reaction (ADR) Model}

\author[a,b,d]{Mohammed Elghandouri\corref{cor1}}
\ead{medelghandouri@gmail.com}
\author[b,d]{Khalil Ezzinbi}
\ead{ezzinbi@uca.ac.ma}
\author[b,c]{Lamiae Saidi}
\ead{maymia1987@gmail.com}
\cortext[cor1]{Corresponding author}
\affiliation[a]{organization={Centre INRIA de Lyon}, addressline={CEI-2 56, Boulevard Niels Bohr}, postcode={69 603}, city={Villeurbanne}, country={France}}
\affiliation[b]{organization={UCA-FSSM}, addressline={Bd. Prince My Abdellah, B.P. 2390}, postcode={40 000}, city={Marrakesh}, country={Morocco}}
\affiliation[c]{organization={UM6P-CRSA}, addressline={Lot 660, Hay Moulay Rachid}, postcode={43 150}, city={Ben Guerir}, country={Morocco}}
\affiliation[d]{organization={IRD-UMMISCO}, addressline={32 Av Henri Varagnat}, postcode={93 143}, city={Bondy}, country={France}}

\begin{abstract}
In this paper, the existence, uniqueness, and positivity of solutions, as well as the asymptotic behavior through a finite fractal dimensional global attractor for a general Advection-Diffusion-Reaction (ADR) equation, are investigated. Our findings are innovative, as we employ semigroups and global attractors theories to achieve these results. Also, an analytical solution of a two-dimensional Advection-Diffusion Equation is presented. And finally, two Explicit Finite Difference schemes are used to simulate solutions in the two- and three-dimensional cases. The numerical simulations are conducted with predefined initial and Dirichlet boundary conditions. 
\end{abstract}

\begin{keyword}
Advection–Diffusion–Reaction\sep Partial Differential Equation\sep Semigroups Theory\sep Existence problems\sep Mild solutions\sep Global Attractor\sep Fractal dimension\sep Numerical methods.
\MSC{35K57\sep 47D60\sep 35A01\sep 34D45\sep 65C20.}
\end{keyword}

\end{frontmatter}

\section{Introduction}
\noindent

The study of Partial Differential Equations (PDEs) holds a paramount position in the realm of mathematical analysis, finding applications across a diverse range of scientific disciplines and engineering fields. Among these, the Advection–Diffusion–Reaction (ADR) partial differential equations, that have widespread applications in fluid dynamics, heat or mass transfer, chemical reaction processes, contaminant transport and population dynamics \cite{Bear, Genuchten, Hetrick, Hundsdorfer, Murray, Shih, Steinfeld}. They model the temporal evolution of biological species in a flowing medium such as water or air, or contaminant transport with biological, chemical or radioactive processes, accounting for advection, diffusion and reaction processes. Therefore, there exists a wide literature about its resolution, numerically and analytically, surrounding the existence, uniqueness, asymptotic behavior, controllability,
numerical approximation of its solutions, etc. 
In  \cite{Zavaleta}, a distinctive classical solution for the single-step irreversible exothermic Arrhenius-type reaction within an incompressible fluid subject to Neumann boundary conditions is demonstrated. The authors provide both proof of the solution's existence and its uniqueness through the utilization of a semigroup formulation as well as the application of the maximum principle. In \cite{Peng}, the authors study the existence and uniqueness of mild solutions for a fractional reaction-diffusion equation, namely fractional Fokker–Planck equations. They use semigroup theory to establish their results. In \cite{MCLEAN}, the authors establish the well-posedness of an initial-boundary value problem encompassing a broad range of linear time-fractional advection-diffusion-reaction equations. The analysis hinges on innovative energy techniques, coupled with the application of a fractional Gronwall inequality and characteristics of fractional integrals. In \cite{ELAMVAZHUTHI}, the authors explore the precise characteristics of controllability exhibited by an advection–diffusion equation within a bounded domain. They achieve this investigation by utilizing time- and space-dependent velocity fields as controlling factors. Other relevant works are mentioned in \cite{LANSER, Polyanin, Schmidt, ZHANG, WHEELER}. Nevertheless, it's important to note that, in terms of mathematical advancements such as asymptotic behavior, existence, uniqueness, and positiveness of solutions for general ADR equations, there hasn't been significant development, particularly concerning semigroup methods.  

Numerically, there are various methods to solve different partial differential equations considering initial and boundary conditions like Dirichlet, Neumann, or Robin. A brief list of some numerical methods is: explicit methods (Finite Difference Methods \cite{Burden,Gustafson,Mphephu} (Explicit Forward Euler method, Upwind scheme, the Central Finite Difference scheme, Total Variation Diminishing (TVD) Schemes \cite{Shu}, etc.) and method of lines); Implicit methods (implicit backward Euler method, Crank Nicholson, Method of Lines); Iterative methods (Jacobi, Gauss–Seidel, and relaxation methods); and Finite Element Methods (Galerkin Method) \cite{Lewis}. However, the numerical resolution of a reaction-advection-diffusion equation while maintaining certain global physical properties such as mass conservation, positivity, and causality is complex. The goal is for the numerical solutions to satisfy the same properties as the exact solution such as positivity, boundedness, or monotonicity. Most numerical schemes aren't inherently designed to satisfy these properties, and they can lead to various numerical artifacts such as artificial oscillations and numerical dispersion. Explicit schemes are efficient but suffer from stability issues and require small time steps to meet the Courant-Friedrichs-Lewy (CFL) condition, thus the overall computational cost is usually high. Implicit schemes are unconditionally stable and allow larger time steps but involve solving algebraic systems at each time step. Semi-implicit schemes strike a balance between explicit and implicit methods. The choice of which scheme to use depends on the specific problem and requirements. Implicit schemes are often preferred for stiff initial value problems due to their stability and larger allowable time steps. Nevertheless, among the various numerical methods for solving ADR equations, finite difference method (FDM) seems to be more popular for the ease of implementation and its simplicity.

Our primary contribution in this work is the comprehensive investigation of the global existence, uniqueness, and positivity of solutions, as well as the exploration of the asymptotic behavior of an ADR equation through the application of semigroup theory. As a result, we establish the existence of a global attractor and determine its finite fractal dimension. To implement our theoretical findings numerically, we employ two fully explicit finite difference numerical schemes for solving the ADR equation: a 2-D explicit centered scheme and a 3-D explicit upwind scheme.

The organization of this paper is as follows: In Section~\ref{sec 1}, we present the model formulation, its origins, and the associated problems. In Section~\ref{sec 2}, we review useful concepts that will assist in proving our main results. Section~\ref{sec 3} employs semigroup theory to demonstrate the main findings related to the global existence, uniqueness, and positivity of solutions for our model. The existence of a global attractor with a finite fractal dimension is established in Section~\ref{section 6}. Section~\ref{sec 4} is dedicated to the analytical solution of the advection-diffusion equation in 2-D, while Section~\ref{sec 5} focuses on numerical simulations using finite difference methods.

\section{Formulation of the model} \label{sec 1}
\subsection{Origins and Problematic}
\noindent

The ADR model, which describes the time evolution of chemical species in air, is characterized by Partial Differential Equations (PDEs) derived from mass balances. Consider a concentration $c(t,x)$ of a certain chemical species, with space variable $x\in \Omega\subset\mathbb{R}$ and time $t\geq 0$. Let $h>0$ be a small number, and consider the average concentration $\bar{c}(t,x)$ in a cell $\Omega(x)=\left[x-\frac{1}{2} h, x+\frac{1}{2} h\right]$,
$$
\bar{c}(t,x)=\frac{1}{h} \int_{x-h / 2}^{x+h / 2} c\left(t,y\right) d y=c(t,x)+\frac{h^2}{24} \frac{\partial^2}{\partial x^2}c(t,x)+\cdots .
$$
If the species is carried along by a flowing medium with velocity $u(t,x)$ then the mass conservation law implies that the change of $\bar{c}(t,x)$ per unit of time is the net balance of inflow and outflow over the cell boundaries,
$$
\frac{\partial}{\partial t} \bar{c}(t,x)=\frac{1}{h}\left[u\left(t,x-\frac{1}{2} h\right) c\left(t,x-\frac{1}{2} h\right)-u\left(t,x+\frac{1}{2} h\right) c\left(t,x+\frac{1}{2} h\right)\right],
$$
where $u\left(t,x \pm \frac{1}{2} h\right) c\left(t,x \pm \frac{1}{2} h\right)$ are the mass fluxes over the left and right cell boundaries. Now, if we let $h \rightarrow 0$, it follows that the concentration satisfies
$$
\frac{\partial}{\partial t} c(t,x)+\frac{\partial}{\partial x}(u(t,x) c(t,x))=0.
$$
This is called an advection equation (or convection equation). In a similar way, we can consider the effect of diffusion. The change in $\bar{c}(t,x)$ is induced by gradients in the solution, and the fluxes across the cell boundaries are $-k\left(t,x \pm \frac{1}{2} h\right) \frac{\partial}{\partial x} c\left(t,x \pm \frac{1}{2} h\right)$, where $k(t,x)$ represents the diffusion coefficient. The corresponding diffusion equation is
$$
\frac{\partial}{\partial t} c(t,x)=\frac{\partial}{\partial x}\left(k(t,x) \frac{\partial}{\partial x} c(t,x)\right).
$$
There may also be a change in $c(t,x)$ due to sources, sinks and chemical reactions, leading to
$$
\frac{\partial}{\partial x}(u(t,x) c(t,x))=f(c(t,x),t,x).
$$
The overall change in concentration is described by combining these three effects, leading to the following Advection-Diffusion-Reaction (ADR) equation:
$$
\frac{\partial}{\partial t} c(t,x)+\frac{\partial}{\partial x}(u(t,x) c(t,x))=\frac{\partial}{\partial x}\left(k(t,x) \frac{\partial}{\partial x} c(t,x)\right)+f(c(t,x),t,x) .
$$
We shall consider the equation in a spatial domain $\Omega \subset \mathbb{R}^m$ ($m=1,2 \text{ or } 3$) with time $t \geq 0$. An initial profile $c(0,x)$ will be given and we also assume that suitable boundary conditions are provided.
More generally, let \( c_1(t,x), \dots, c_s(t,x) \) denote the concentrations of \( s \) chemical species, where the spatial variable \( x \in \Omega \) and time \( t \geq 0 \). Then the basic mathematical equations for transport and reaction are given by the following set of (PDEs):
$$
\begin{gathered}
\frac{\partial}{\partial t} c_j(t,x)+\sum_{i=1}^m \frac{\partial}{\partial x_i}\left(u_i(t,x) c_j(t,x)\right)= \\
\sum_{i=1}^m \frac{\partial}{\partial x_i}\left(k_i(t,x) \frac{\partial}{\partial x_i} c_j(t,x)\right)\!+\!f_j\left(c_1(t,x), \ldots, c_s(t,x),t,x\right),\quad j=1,2, \ldots, s,
\end{gathered}
$$
with suitable initial and boundary conditions. The quantities $u_i$ that represent the velocities of the transport medium, such as water or air, are either given in a data archive or computed alongside with a meteorological or hydrodynamical code. (In such codes Navier-Stokes or shallow water equations are solved, where again advection-diffusion equations are of primary importance.) The diffusion coefficients $k_i$ are constructed by the modelers and may include also parameterizations of turbulence. The final term $f_j(c, x, t)$, which gives a coupling between the various species, describes the nonlinear chemistry together with emissions (sources) and depositions (sinks). In actual models these equations are augmented with other suitable subgrid parameterizations and coordinate transformations.

\subsection{Description of the model}
\noindent

In this work, we undertake a comprehensive exploration of the following ADR equation:
\begin{equation}
\dfrac{\partial c_j(t,x)}{\partial t}=\sum\limits_{i=1}^{m}\left(k_i\dfrac{\partial^2 c_j(t,x)}{\partial x_i^2}-u_i\dfrac{\partial c_j(t,x)}{\partial x_i} \right)+R_j(t,c(t,x)),
\label{Eq 0}
\end{equation}
for $t\geq 0$, $x=(x_1,\ldots,x_m)\in\Omega$, where $\Omega$ is a bounded set in $\mathbb{R}^m$, $m\in\{1,2,3\}$, and $j=1,\ldots,s$ with $s\in \mathbb{N}^{*}$. The coefficients $k_i$ and $u_i$ are positive constants, and $c(t,x)=(c_1(t,x),\ldots,c_s(t,x))$. The function $R_j(t,c(t,x))$ represents the $j^{th}$ chemical reaction between species and is given by

\begin{equation}
R_j(t,c(t,x))=\sum\limits_{\kappa=1}^{r}(r_{j\kappa}-l_{j\kappa})h_\kappa(t)\prod\limits_{\nu=1}^{s}(c_\nu(t,x))^{l_{\nu\kappa}},
\hspace{0.1cm} j=1,\ldots,s, \hspace{0.1cm} x\in \Omega, \hspace{0.1cm} t\geq 0.
\label{Eq 1-2}
\end{equation}
Here, $r\in \mathbb{N}^{*}$ represents the number of chemical reactions between species, $l_{j\kappa}$ and $r_{j\kappa}$ are non-negative integers (belong to $\mathbb{N}$) describing the loss and gain of the number of molecules in the $j^{th}$ reaction, and $h_\kappa(t)$ is the rate of reaction, dependent on time $t$ due to external influences such as temperature and sunlight. Importantly, for each $\kappa=1,\ldots,r$, $h_\kappa(\cdot)$ is almost everywhere continuous, and there exists $d_\kappa>0$ such that $0\leq h_\kappa(t)\leq d_\kappa$ for $t\geq 0$. On the boundary of $\Omega$, we use Dirichlet boundary conditions, i.e.,
\begin{equation}
c_j(t,x)=0, \hspace{0.1cm} j=1,\ldots,s \hspace{0.1cm}\text{ for } \hspace{0.1cm} x\in \partial \Omega \hspace{0.1cm} \text{ and } \hspace{0.1cm} t\geq 0.
\label{Eq 2}
\end{equation}
As for the initial conditions, we denote them by
\begin{equation}
c_j(0,x)=c_{j,0}(x), \hspace{0.1cm} j=1,\ldots,s \hspace{0.1cm} \text{ for } \hspace{0.1cm} x\in \Omega,
\label{Eq 3.3}
\end{equation}
which represent the initial concentrations of the species.

 Let $\mathcal{A}:\left(H_0^{1}(\Omega)\cap H^{2}(\Omega) \right)^{s} \to \left(\mathbb{L}^{2}(\Omega)\right)^{s}$ be the map defined by
\begin{equation*}
\left(\mathcal{A} z\right)(x)=\sum\limits_{i=1}^{m}\left( k_i\dfrac{\partial^2 z(x)}{\partial x_i^2}-u_i\dfrac{\partial z(x)}{\partial x_i}\right) \text{ for } z\in \left(H_0^{1}(\Omega)\cap H^{2}(\Omega) \right)^{s} \text{ and } x\in \Omega.
\end{equation*}
We denote by $u(t)(x)=(u_1(t)(x),\ldots,u_s(t)(x)):=(c_1(t,x),\ldots,c_s(t,x))=c(t,x)$, and let $F(t,u(t))(x)=\left(F_1(t,u(t))(x),\ldots, F_s(t,u(t))(x)\right)$ for $t\geq 0$, where
\begin{equation}
F_j(t,u(t))(x)=R_j(t,u(t)(x)),\quad j=1, \ldots,s, \quad x\in \Omega,
\label{F}
\end{equation}
for $u(t)=(u_1(t),\ldots,u_s(t))\in D(F)$, where
\begin{equation}
D(F):=\left\{z=(z_1,\ldots,z_s)\in \left(\mathbb{L}^{2}(\Omega)\right)^{s}\mid F(t,z)\in \left(\mathbb{L}^{2}(\Omega)\right)^{s} \text{ for all } t\geq 0\right\}.
\end{equation}
Under the above notations, equations \eqref{Eq 0}--\eqref{Eq 3.3} can be expressed in the following abstract form:
\begin{equation}
\left\{
\begin{array}{l}
u'(t)=\mathcal{A} u(t)+ F(t,u(t)), \quad t\geq 0\\
u(0)=u_0.
\end{array}\right.
\label{Eq 1}
\end{equation}

We then delve into the mathematical properties of equation \eqref{Eq 1}, investigating conditions that ensure the existence, uniqueness and positivity of solutions, as well as exploring the asymptotic behavior through a finite fractal-dimensional global attractor. Additionally, we develop numerical strategies for effective approximation. Numerical simulations are conducted using the explicit forward Euler method for the three-dimensional advection-diffusion-reaction equations with Dirichlet boundary conditions set to zero. Our focus is on an advection-dominant problem related to the transport of air pollutants, providing valuable insights into the practical implications of our theoretical framework.
\section{Basic working tools} \label{sec 2}
\noindent

In this section, some necessary definitions, theorems, and lemmas to demonstrate our main results are recalled. Let $(X, \Vert\cdot\Vert_X)$ be a real Banach lattice space endowed with the ordering $\geqq$. We define $X_{+}$ as the set of all elements $x$ in $X$ such that $x\geqq 0$; this set is called the positive cone of $X$. Additionally, we denote $x^{+} = \max(0, x)$ for $x\in X$.

\begin{definition} \cite[Definition 11.9]{Batkai}
	A linear operator $A$ on $X$ is called dispersive if for every $f\in D(A)$, and $\lambda>0$ 
	\begin{center}
		$\Vert(\lambda f-Af)^{+}\Vert_X\geq \lambda\Vert f^{+}\Vert_X$.
	\end{center}
\end{definition}

The following Theorem shows the dispersivity of linear operators in Hilbert spaces.

\begin{theorem} \cite{Batkai} A linear operator $A$ on a real Hilbert lattice space $H$ is dispersive if and only if $\langle Af,f^+\rangle_{H}\leq 0$ for every $f\in D(A)$.
	\label{Remark 11.13}
\end{theorem}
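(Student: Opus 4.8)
The plan is to prove the two implications separately, using only standard facts about a real Hilbert lattice $H$ that follow from its combined lattice/Hilbert structure: for $f\in H$ one has the disjoint decomposition $f=f^{+}-f^{-}$ with $\langle f^{+},f^{-}\rangle_{H}=0$, hence $\|f\|_{H}^{2}=\|f^{+}\|_{H}^{2}+\|f^{-}\|_{H}^{2}$; the inner product of two positive elements is nonnegative; the positive‑part map is order preserving and satisfies $\|g^{+}\|_{H}\le\|g\|_{H}$; and, since a Hilbert lattice is Dedekind complete, for the band generated by $f^{+}$ there is a band projection $P$ that is simultaneously a self‑adjoint orthogonal projection and a lattice homomorphism, with $Pf^{+}=f^{+}$, $Pf^{-}=0$, and $\|x\|_{H}^{2}=\|Px\|_{H}^{2}+\|(I-P)x\|_{H}^{2}$ for all $x$.

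\textbf{Sufficiency.} Assume $\langle Af,f^{+}\rangle_{H}\le0$ for all $f\in D(A)$ and fix $f\in D(A)$ and $\lambda>0$. If $f^{+}=0$ the dispersivity inequality is trivial, so assume $f^{+}\neq0$. Then
\[
\lambda\|f^{+}\|_{H}^{2}=\langle\lambda f,f^{+}\rangle_{H}\le\langle\lambda f-Af,f^{+}\rangle_{H}\le\langle(\lambda f-Af)^{+},f^{+}\rangle_{H}\le\|(\lambda f-Af)^{+}\|_{H}\,\|f^{+}\|_{H},
\]
where the first equality uses $\langle f^{-},f^{+}\rangle_{H}=0$, the second step uses $-\langle Af,f^{+}\rangle_{H}\ge0$, the third writes $\lambda f-Af=(\lambda f-Af)^{+}-(\lambda f-Af)^{-}$ together with $\langle(\lambda f-Af)^{-},f^{+}\rangle_{H}\ge0$, and the last is Cauchy–Schwarz. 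Dividing by $\|f^{+}\|_{H}$ gives dispersivity.

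\textbf{Necessity.} Assume $A$ is dispersive. Since $(\lambda f-Af)^{+}=\lambda\,(f-\lambda^{-1}Af)^{+}$ for $\lambda>0$, dispersivity is equivalent to $\|(f-tAf)^{+}\|_{H}\ge\|f^{+}\|_{H}$ for all $f\in D(A)$ and all $t>0$. The heart of the argument is the opposite, $t$‑quadratic upper bound
\[
\|(f-tAf)^{+}\|_{H}^{2}\le\|f^{+}\|_{H}^{2}-2t\,\langle Af,f^{+}\rangle_{H}+t^{2}\|Af\|_{H}^{2}\qquad(t>0),
\]
which I would obtain as follows. With $P$ the band projection onto the band generated by $f^{+}$ one has $Pf=f^{+}$ and, for $Q:=I-P$, $Qf=-f^{-}\le0$. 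Write $\rho:=(f-tAf)^{+}$. Since $P$ is a lattice homomorphism, $P\rho=(P(f-tAf))^{+}\le|f^{+}-tP(Af)|$, so using self‑adjointness of $P$ and $Pf^{+}=f^{+}$,
\[
\|P\rho\|_{H}^{2}\le\|f^{+}-tP(Af)\|_{H}^{2}=\|f^{+}\|_{H}^{2}-2t\,\langle Af,f^{+}\rangle_{H}+t^{2}\|P(Af)\|_{H}^{2}.
\]
Likewise $Q\rho=(Qf-tQ(Af))^{+}\le(-tQ(Af))^{+}=t\,(Q(Af))^{-}$ because $Qf\le0$, hence $\|Q\rho\|_{H}^{2}\le t^{2}\|Q(Af)\|_{H}^{2}$. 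Adding the two estimates and using $\|\rho\|_{H}^{2}=\|P\rho\|_{H}^{2}+\|Q\rho\|_{H}^{2}$ and $\|P(Af)\|_{H}^{2}+\|Q(Af)\|_{H}^{2}=\|Af\|_{H}^{2}$ gives the claimed bound. Combining it with $\|(f-tAf)^{+}\|_{H}^{2}\ge\|f^{+}\|_{H}^{2}$ yields $2t\,\langle Af,f^{+}\rangle_{H}\le t^{2}\|Af\|_{H}^{2}$, i.e. $\langle Af,f^{+}\rangle_{H}\le\tfrac{t}{2}\|Af\|_{H}^{2}$ for every $t>0$; letting $t\to0^{+}$ gives $\langle Af,f^{+}\rangle_{H}\le0$.

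The only delicate point is this quadratic upper bound: the naive estimate $(f-tAf)^{+}\le f^{+}+t(Af)^{-}$ is too lossy, since it produces the coefficient $2\langle f^{+},(Af)^{-}\rangle_{H}\ge-2\langle Af,f^{+}\rangle_{H}$ instead of the exact first‑order coefficient $-2\langle Af,f^{+}\rangle_{H}$, so one really must split the positive part through the band projection that "lives on $\{f>0\}$''. Equivalently, one may invoke the representation of a real Hilbert lattice as an $L^{2}(\mu)$‑space and run the same computation by splitting the ground set into $\{f>0\}$ and $\{f\le0\}$ and using $((f-tAf)^{+})^{2}\le(f-tAf)^{2}$ there; this is the form most directly relevant here, since in the present paper $H=\left(\mathbb{L}^{2}(\Omega)\right)^{s}$.
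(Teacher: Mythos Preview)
Your proof is correct. The sufficiency direction is the standard one-line chain, and your necessity argument via the band projection onto the band generated by $f^{+}$ is clean: the key quadratic upper bound on $\|(f-tAf)^{+}\|_{H}^{2}$ is exactly what is needed, and the splitting through $P$ and $Q=I-P$ recovers the correct first-order coefficient $-2\langle Af,f^{+}\rangle_{H}$, which the naive estimate $(f-tAf)^{+}\le f^{+}+t(Af)^{-}$ would miss, as you note.

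There is nothing to compare against in the paper, however: this theorem is quoted verbatim from the reference \cite{Batkai} (it is essentially Remark~11.13 there) and is not proved in the present paper at all---it is used as a black box to verify that the advection part $\mathcal{A}_{2}$ is dispersive. So your write-up supplies a full argument where the authors simply invoke a citation; in that sense you have done strictly more than the paper.
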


We state the following definition of $A$-boundedness.
\begin{definition} \cite{Batkai} Let $A$ be a closed linear operator on $X$. A linear operator $B$ with a domain $D(B)$ is called $A$-bounded if $D(A)\subseteq D(B)$ and there exist $a,b\geq 0$ such that for all $f\in D(A)$ the inequality 
	\begin{equation}
	\Vert Bf\Vert_X\leq a\Vert A f\Vert_X+b\Vert f\Vert_X,
	\label{In 6}
	\end{equation}
	holds. The $A$-bound of $B$ is defined by
	\begin{center}
		$a_0=\inf\left\{a\geq 0\mid \text{there exists $b \geq 0$ such that the inequality \eqref{In 6} holds} \right\}$.
	\end{center}
\end{definition}

The following Theorem will be needed to demonstrate that the linear operator $\mathcal{A}$ generates a positive $C_0$-semigroup of contraction on $\left(\mathbb{L}^{2}(\Omega)\right)^{s}$.

\begin{theorem} \cite[Theorem 13.3.]{Batkai}
	Let $A$ be the generator of a positive $C_0$-semigroup of contraction on $X$ and $B$ a dispersive and $A$-bounded operator with $A$-bound $a_0<1$. Then $A+B$, defined on $D(A)$, generates a positive $C_0$-semigroup of contraction on $X$.
	\label{Theorem 13.3}
\end{theorem}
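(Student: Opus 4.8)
The plan is to derive the statement from the positive version of the Lumer--Phillips theorem (see \cite{Batkai}): a densely defined operator $C$ on the Banach lattice $X$ generates a positive $C_0$-semigroup of contractions if and only if $C$ is dispersive and $\mathrm{ran}(\lambda_0 - C) = X$ for some $\lambda_0 > 0$. Writing $C := A + B$ with $D(C) = D(A)$ --- legitimate since $A$-boundedness gives $D(A) \subseteq D(B)$ --- I would verify these two properties for $C$; density of $D(A)$ is automatic because $A$ is a generator.

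\emph{Dispersivity of $C$.} Both $A$ and $B$ are dispersive: $A$ because it generates a positive contraction semigroup, $B$ by hypothesis; moreover $tB$ is dispersive for every $t \ge 0$, as one sees by rescaling $\lambda$ in the defining inequality $\|(\lambda f - tBf)^+\|_X \ge \lambda\|f^+\|_X$. In the Hilbert-lattice case $X = (\mathbb{L}^2(\Omega))^s$ relevant to our model, dispersivity of the sum is then immediate from Theorem~\ref{Remark 11.13}: $\langle (A + tB)f, f^+\rangle = \langle Af, f^+\rangle + t\langle Bf, f^+\rangle \le 0$ for every $f \in D(A)$ and $t \ge 0$.

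\emph{Surjectivity of $\lambda_0 - C$.} On $D(A)$ one has the factorization $\lambda - C = (I - B\,R(\lambda,A))(\lambda - A)$, so since $\mathrm{ran}(\lambda - A) = X$ it suffices to invert $I - B\,R(\lambda,A)$ on $X$. Because $A$ generates a contraction semigroup, $\|R(\lambda,A)\| \le 1/\lambda$ for $\lambda > 0$, and $A\,R(\lambda,A) = \lambda R(\lambda,A) - I$ gives $\|A\,R(\lambda,A)\| \le 2$; hence the $A$-boundedness estimate yields $\|B\,R(\lambda,A)\| \le a\,\|A\,R(\lambda,A)\| + b\,\|R(\lambda,A)\| \le 2a + b/\lambda$. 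If the $A$-bound can be taken below $1/2$, I would choose such an $a$ and $\lambda$ large enough that $\|B\,R(\lambda,A)\| < 1$; then $I - B\,R(\lambda,A)$ is invertible by the Neumann series, $\lambda - C$ is onto, and the characterization applies. To reach the stated hypothesis $a_0 < 1$, I would interpolate by finitely many steps: fix an integer $n > 2a/(1-a)$ and pass from $A$ to $A + B$ through $A_k := A + \tfrac{k}{n}B$, $k = 0,\dots,n$. A direct computation shows $\tfrac1n B$ is $A_k$-bounded with $A_k$-bound at most $\tfrac{a}{n(1-a)} < \tfrac12$, uniformly in $k$, and $\tfrac1n B$ is dispersive; so, inductively, $A_{k+1} = A_k + \tfrac1n B$ generates a positive contraction semigroup (using the already-settled small-bound case together with dispersivity of $A_k + \tfrac1n B$ via Theorem~\ref{Remark 11.13}), and $A_n = A + B$ is the desired generator.

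The crux --- and the only real obstacle --- is this interpolation step: the factor $2$ in $\|A\,R(\lambda,A)\| \le 2$ blocks a single-shot resolvent estimate once $a_0 \in [\tfrac12,1)$, so contractivity (via the resolvent bound) and positivity (via dispersivity) must be propagated in small increments, and the point of the hypothesis $a_0 < 1$ is exactly that it keeps the relative $A$-bound of each increment uniformly below $1/2$, so that a fixed finite number of steps suffices. In a general Banach lattice there is the further subtlety that dispersivity of a sum is not free; one then carries the dispersivity along the same homotopy using positive tangent (norming) functionals on $f^+$ rather than the inner-product shortcut provided here by Theorem~\ref{Remark 11.13}.
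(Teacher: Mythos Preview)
The paper does not supply its own proof of this statement: Theorem~\ref{Theorem 13.3} is quoted verbatim from \cite[Theorem~13.3]{Batkai} and invoked as a black box in the proof of Proposition~\ref{positive}. There is therefore no in-paper argument to compare your proposal against.

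That said, your proposal is correct and is essentially the standard argument behind the cited result. The two ingredients --- dispersivity of $A+B$ via the positive Lumer--Phillips characterization, and surjectivity of $\lambda - (A+B)$ via the factorization $\lambda - C = (I - B\,R(\lambda,A))(\lambda - A)$ together with a Neumann-series estimate --- are exactly the right ones, and your interpolation $A_k = A + \tfrac{k}{n}B$ to pass from $A$-bound $<\tfrac12$ to $A$-bound $<1$ is the classical bootstrapping step (your bound $\tfrac{a}{n(1-a)}$ on the $A_k$-bound of $\tfrac1n B$ and the choice $n > 2a/(1-a)$ are both correct). For the application in this paper only the Hilbert-lattice case $X=(\mathbb{L}^2(\Omega))^s$ is needed, and there your shortcut via Theorem~\ref{Remark 11.13} settles dispersivity of every $A + tB$ cleanly; you are also right that in a general Banach lattice the additivity of dispersivity is the delicate point and must be handled through the duality characterization with positive norming functionals on $f^+$, since the functional witnessing dispersivity of $A$ need not coincide with the one for $B$.
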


We recall the following definition of mild solutions for equation \eqref{Eq 1}.

\begin{definition} Let $0<a\leq +\infty$.
	A function $u(\cdot):[0,a)\to \left(\mathbb{L}^{2}(\Omega)\right)^{s}$ is called a mild solution of equation \eqref{Eq 1} on $[0,a)$ if
	\begin{equation}
	u(t)=S(t)u_0+\displaystyle\int_{0}^{t}S(t-\tau)F(\tau,u(\tau))d\tau \hspace{0.1cm} \text{ for } \hspace{0.1cm} t\in[0,a),
	\label{Eq 3}
	\end{equation}
	where $(S(t))_{t\geq 0}$ is the $C_0$-semigroup generated by $\mathcal{A}$ on $\left(\mathbb{L}^{2}(\Omega)\right)^{s}$.
\end{definition}

To demonstrate the existence of solutions for equation \eqref{Eq 1}, we require the following theorem.

\begin{theorem} \cite[Theorem 1.4]{Pazy} Let $f:\mathbb{R}^{+}\times X\to X$ be continuous w.r.t the first argument and locally Lipschitz continuous w.r.t the second argument. If $B$ is the infinitesimal generator of a $C_0$-semigroup $(T(t))_{t\geq 0}$ on $X$, then for every $v_0\in X$, there exists a $t_{\text{max}}\leq +\infty$ such that  the initial value problem:
	\begin{equation}
	\left\{\begin{array}{l}
	v'(t)=B v(t)+f(t,v(t)), \quad t\geq 0\\
	v(0)=v_0,
	\end{array}\right.
	\label{Eq 8}
	\end{equation}
	has a unique mild solution $v$ on $[0,t_{\text{max}})$. Moreover, if $t_{\text{max}}<+\infty$, then $\limsup\limits_{t\to t_{\text{max}}}\Vert v(t)\Vert_{X}=+\infty$.
	\label{existence}
\end{theorem}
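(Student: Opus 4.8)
The plan is to establish this classical result by the Picard contraction-mapping argument for local well-posedness, then to glue the local solutions into a maximal one, and finally to prove the blow-up alternative by a continuation argument.

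\emph{Local existence and uniqueness.} Fix $v_0\in X$ and $\rho>0$. Since $(T(t))_{t\ge 0}$ is a $C_0$-semigroup there are $M\ge 1$ and $\omega\in\mathbb{R}$ with $\|T(t)\|\le Me^{\omega t}$; put $M_\tau:=\sup_{0\le t\le\tau}\|T(t)\|<\infty$. Fixing a horizon $\tau_0>0$, continuity of $f$ in the first variable together with local Lipschitz continuity in the second yields, via a routine compactness argument on $[0,\tau_0]\times\overline{B}(v_0,\rho)$, constants $L,N\ge 0$ with $\|f(t,v)-f(t,w)\|\le L\|v-w\|$ and $\|f(t,v)\|\le N$ there. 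On the complete metric space $E_\tau=\{v\in C([0,\tau];X):v(0)=v_0,\ \sup_{[0,\tau]}\|v(t)-v_0\|\le\rho\}$ define
\[
(\Phi v)(t)=T(t)v_0+\int_0^t T(t-s)f(s,v(s))\,ds .
\]
Using the strong continuity of $T$ to make $\sup_{[0,\tau]}\|T(t)v_0-v_0\|$ small and the bound $N$, one checks $\Phi(E_\tau)\subseteq E_\tau$ for $\tau$ small, while $\|\Phi v-\Phi w\|_{C([0,\tau];X)}\le M_\tau L\tau\,\|v-w\|_{C([0,\tau];X)}$ makes $\Phi$ a contraction as soon as $M_\tau L\tau<1$. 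The Banach fixed point theorem then gives a unique mild solution on $[0,\tau]$.

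\emph{Maximal solution.} If $u,v$ are mild solutions on a common interval $[0,a]$, subtracting the two integral identities and applying Gronwall's inequality to $t\mapsto\|u(t)-v(t)\|$ forces $u\equiv v$; hence all local solutions are restrictions of a single one. Setting $t_{\max}:=\sup\{a>0:\eqref{Eq 8}\text{ admits a mild solution on }[0,a)\}$, we obtain a unique mild solution $v$ on $[0,t_{\max})$ that, by definition of the supremum, cannot be extended past $t_{\max}$.

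\emph{Blow-up alternative.} Suppose $t_{\max}<+\infty$ and, for contradiction, $\limsup_{t\to t_{\max}}\|v(t)\|<+\infty$, so $\|v(t)\|\le K$ on $[0,t_{\max})$ for some $K$, whence $f(\cdot,v(\cdot))$ is bounded on $[0,t_{\max})$. For $0<t<t'<t_{\max}$ the variation-of-constants formula gives
\[
v(t')-v(t)=(T(t'-t)-I)T(t)v_0+\int_t^{t'}T(t'-s)f(s,v(s))\,ds+\int_0^{t}\big(T(t'-s)-T(t-s)\big)f(s,v(s))\,ds ;
\]
the middle integral is $O(t'-t)$, and the first and last terms tend to $0$ as $t,t'\uparrow t_{\max}$ by strong continuity of $T$. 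Thus $v$ satisfies a Cauchy criterion and $v^\star:=\lim_{t\uparrow t_{\max}}v(t)$ exists in $X$. Extending $v$ by $v(t_{\max}):=v^\star$ and re-running the local existence step from the datum $v^\star$ at time $t_{\max}$ produces a mild solution on $[0,t_{\max}+\delta)$ for some $\delta>0$, contradicting maximality. Hence $t_{\max}<+\infty$ forces $\limsup_{t\to t_{\max}}\|v(t)\|=+\infty$.

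\emph{Expected main obstacle.} The delicate part is this last continuation step: since the semigroup is only strongly (not norm-) continuous and $t\mapsto T(t)v_0$ need not be differentiable, one cannot differentiate the integral equation, but must instead deduce convergence of $v(t)$ as $t\uparrow t_{\max}$ directly from the variation-of-constants formula. In particular, the term $\int_0^{t}\big(T(t'-s)-T(t-s)\big)f(s,v(s))\,ds$ must be handled by pulling $T(t'-t)-I$ out of the integral and splitting off a short final sub-interval, so that only the \emph{strong} continuity $T(h)\to I$ is invoked; the extraction of time-uniform Lipschitz and boundedness constants for $f$ in the local step is a similar, though minor, compactness point rather than an immediate consequence of ``locally Lipschitz''.
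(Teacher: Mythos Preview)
The paper does not supply its own proof of this theorem: it is quoted verbatim from Pazy and used only as a black box (see the citation \cite[Theorem 1.4]{Pazy} and the subsequent Remark~\ref{remark 3.8}). So there is no ``paper's proof'' to compare against; the relevant benchmark is Pazy's original argument.

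Your proposal is correct and follows the standard route. Two small remarks. First, in the local step the phrase ``routine compactness argument on $[0,\tau_0]\times\overline{B}(v_0,\rho)$'' is slightly misleading in infinite dimensions: the closed ball is not compact, and the constants $L,N$ come directly from the \emph{definition} of ``locally Lipschitz in the second variable'' adopted in Pazy (uniform Lipschitz constant on each product $[0,T]\times\{ \|v\|\le c\}$), together with continuity of $t\mapsto f(t,v_0)$. Second, your blow-up alternative via a Cauchy criterion works, but the cleanest way to handle the term $\int_0^{t}\big(T(t'-s)-T(t-s)\big)f(s,v(s))\,ds$ is simply dominated convergence after writing the integrand as $(T(t'-t)-I)T(t-s)f(s,v(s))$; alternatively, and this is what Pazy actually does, one bypasses the Cauchy argument entirely by observing that the local existence time $\tau$ from the contraction step depends only on the Lipschitz and growth constants on $[0,t_{\max}]\times\overline{B}(0,K+\rho)$, hence is uniform over all starting points $v(t_0)$ with $t_0<t_{\max}$, and restarting from some $t_0>t_{\max}-\tau$ immediately contradicts maximality. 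Your approach is equally valid, just marginally more delicate.
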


\begin{corollary} \cite{Batkai} Assume that conditions in Theorem \ref{existence} are satisfied.
	If $f$ is at most affine w.r.t the second argument, then $t_{\text{max}}=+\infty$.
	\label{affine}
\end{corollary}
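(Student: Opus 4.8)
The plan is to argue by contradiction, using the blow-up alternative already contained in Theorem~\ref{existence}: if $t_{\max}<+\infty$, then $\limsup_{t\to t_{\max}}\Vert v(t)\Vert_X=+\infty$. Hence it suffices to show that, under the affine-growth hypothesis, $\Vert v(t)\Vert_X$ stays bounded on $[0,t_{\max})$ whenever $t_{\max}<+\infty$. First I would spell out what ``$f$ is at most affine w.r.t. the second argument'' means: there exist functions $\alpha,\beta:\mathbb{R}^{+}\to\mathbb{R}^{+}$, continuous (hence bounded on every compact interval) since $f$ is continuous in its first argument, such that $\Vert f(t,v)\Vert_X\le \alpha(t)\Vert v\Vert_X+\beta(t)$ for all $t\ge 0$ and $v\in X$. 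I would also recall the standard semigroup bound: there are $M\ge 1$ and $\omega\in\mathbb{R}$ with $\Vert T(t)\Vert_{\mathcal{L}(X)}\le Me^{\omega t}$ for $t\ge 0$.

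Next, assume for contradiction that $t_{\max}<+\infty$, fix $t\in[0,t_{\max})$, and start from the mild-solution identity $v(t)=T(t)v_0+\int_{0}^{t}T(t-\tau)f(\tau,v(\tau))\,d\tau$. Taking norms and combining the semigroup bound with the affine estimate gives
\[
\Vert v(t)\Vert_X \;\le\; M_0\Vert v_0\Vert_X \;+\; M_0\int_{0}^{t}\bigl(\alpha(\tau)\Vert v(\tau)\Vert_X+\beta(\tau)\bigr)\,d\tau,
\]
where $M_0:=M\sup_{0\le s\le t_{\max}}e^{\omega s}<+\infty$ because $[0,t_{\max}]$ is compact. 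Putting $C_1:=M_0\Vert v_0\Vert_X+M_0\,t_{\max}\sup_{[0,t_{\max}]}\beta$ and $C_2:=M_0\sup_{[0,t_{\max}]}\alpha$, both finite, this reads $\Vert v(t)\Vert_X\le C_1+C_2\int_{0}^{t}\Vert v(\tau)\Vert_X\,d\tau$ for every $t\in[0,t_{\max})$.

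Finally, Gronwall's inequality yields $\Vert v(t)\Vert_X\le C_1 e^{C_2 t}\le C_1 e^{C_2 t_{\max}}$ for all $t\in[0,t_{\max})$, so $\limsup_{t\to t_{\max}}\Vert v(t)\Vert_X\le C_1 e^{C_2 t_{\max}}<+\infty$, which contradicts the blow-up alternative of Theorem~\ref{existence}. Therefore $t_{\max}=+\infty$, and the mild solution is global. The only genuinely delicate point is the bookkeeping that guarantees finiteness of the constants $M_0$, $C_1$, $C_2$ on the compact interval $[0,t_{\max}]$, together with observing that the affine hypothesis is exactly what turns the a~priori estimate into a \emph{linear} integral inequality in $\Vert v(\cdot)\Vert_X$ so that the Gronwall lemma applies; the rest is routine. $\square$
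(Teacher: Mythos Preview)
Your argument is correct and is exactly the standard route: assume $t_{\max}<+\infty$, use the mild formulation together with the affine growth bound to obtain a linear integral inequality, apply Gronwall, and contradict the blow-up alternative of Theorem~\ref{existence}. One small quibble: the claim that $\alpha$ and $\beta$ can be taken continuous ``since $f$ is continuous in its first argument'' is not quite justified as written; what you actually need (and use) is only that they be locally bounded on $[0,t_{\max}]$, which is the usual meaning of ``at most affine.''

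As for comparison with the paper: there is nothing to compare. The paper does not prove Corollary~\ref{affine}; it simply cites it from \cite{Batkai} and then invokes it in the proof of Theorem~\ref{thm 3.6} (case $\beta=1$). Your write-up therefore supplies a proof where the paper offers none, and it is the expected one.
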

\begin{remark}\label{remark 3.8}
	Based on the proofs of Theorems 1.4 and 1.2 in \cite{Pazy}, it follows that if $f$ is almost everywhere continuous and bounded with respect to the first argument, and locally Lipschitz continuous with respect to the second argument, then the conclusions of Theorem \ref{existence} still hold.
\end{remark}

The following Lemmas will be needed throughout this work.

\begin{lemma} \cite[Proposition A.47]{Batkai}
	Let $I\subset \mathbb{R}$ be an open interval and $f\in W^{2,p}_{0}(I;\mathbb{R})$, $p\geq 1$. Then, for any
	$\epsilon>0$ there is a constant $C(\epsilon)>0$ such that
	\begin{center}
		$	\Vert f^{'}\Vert_{\mathbb{L}^{p}}\leq \epsilon\Vert f^{''}\Vert_{\mathbb{L}^{p}}+C(\epsilon)\Vert f\Vert_{\mathbb{L}^{p}}$.
	\end{center}
	\label{Proposition A.47}
\end{lemma}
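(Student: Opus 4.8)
\noindent\emph{Proof plan.} The plan is to reduce the estimate to the whole real line by zero extension, and then to derive it from a second-order Taylor expansion with integral remainder. First I would observe that, since $f\in W^{2,p}_0(I;\mathbb{R})$, the function $\tilde f$ obtained by extending $f$ by $0$ outside $I$ belongs to $W^{2,p}(\mathbb{R};\mathbb{R})$, with $(\tilde f)'=\widetilde{f'}$ and $(\tilde f)''=\widetilde{f''}$ almost everywhere, and
\begin{equation*}
\Vert\tilde f\Vert_{\mathbb{L}^{p}(\mathbb{R})}=\Vert f\Vert_{\mathbb{L}^{p}(I)},\qquad \Vert(\tilde f)'\Vert_{\mathbb{L}^{p}(\mathbb{R})}=\Vert f'\Vert_{\mathbb{L}^{p}(I)},\qquad \Vert(\tilde f)''\Vert_{\mathbb{L}^{p}(\mathbb{R})}=\Vert f''\Vert_{\mathbb{L}^{p}(I)} .
\end{equation*}
This is clear for $f\in C_c^{\infty}(I)$ and then passes to $W^{2,p}_0(I)$ by density, so it is enough to prove the inequality for $g:=\tilde f\in W^{2,p}(\mathbb{R})$.

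For such a $g$, the derivative $g'$ is absolutely continuous on $\mathbb{R}$ with $g''\in\mathbb{L}^{p}(\mathbb{R})$, so for every $h>0$, every $s\in(0,h)$ and almost every $x\in\mathbb{R}$ Taylor's formula with integral remainder gives
\begin{equation*}
g(x\pm s)=g(x)\pm s\,g'(x)+\int_{0}^{s}(s-\sigma)\,g''(x\pm\sigma)\,d\sigma .
\end{equation*}
Taking $s=h$, subtracting the two identities and solving for $g'(x)$ yields
\begin{equation*}
g'(x)=\frac{g(x+h)-g(x-h)}{2h}-\frac{1}{2h}\int_{0}^{h}(h-\sigma)\bigl[g''(x+\sigma)-g''(x-\sigma)\bigr]\,d\sigma .
\end{equation*}
I would then take $\mathbb{L}^{p}(\mathbb{R})$ norms of both sides: the first term is bounded by $\tfrac{1}{h}\Vert g\Vert_{\mathbb{L}^{p}}$ by translation invariance of the norm together with the triangle inequality, while Minkowski's integral inequality bounds the second term by $\tfrac{1}{2h}\int_{0}^{h}(h-\sigma)\,2\Vert g''\Vert_{\mathbb{L}^{p}}\,d\sigma=\tfrac{h}{2}\Vert g''\Vert_{\mathbb{L}^{p}}$. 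Altogether,
\begin{equation*}
\Vert g'\Vert_{\mathbb{L}^{p}}\le\frac{h}{2}\Vert g''\Vert_{\mathbb{L}^{p}}+\frac{1}{h}\Vert g\Vert_{\mathbb{L}^{p}},
\end{equation*}
and choosing $h=2\epsilon$ gives the claim with $C(\epsilon)=\tfrac{1}{2\epsilon}$, after translating back to $f$ on $I$.

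I do not expect a serious obstacle in this argument; the two points requiring care are the zero-extension step, which is precisely where the hypothesis $f\in W^{2,p}_0(I)$ (vanishing of $f$ and $f'$ at $\partial I$), rather than merely $f\in W^{2,p}(I)$, is used, and the interchange of the $\mathbb{L}^{p}$ norm with the $\sigma$-integral via Minkowski's integral inequality. An alternative route, valid only for $p=2$, would be to invoke Plancherel's theorem together with the elementary Young-type bound $|\xi|\le\epsilon\,\xi^{2}+C(\epsilon)$ on the Fourier side; however, the Taylor argument above has the advantage of covering all $p\ge 1$ uniformly.
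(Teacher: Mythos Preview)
The paper does not prove this lemma at all; it is simply quoted from \cite[Proposition A.47]{Batkai} as a black-box tool, used afterwards in the proof of Proposition~\ref{positive} to show that the first-order part $\mathcal{A}_2$ is $\mathcal{A}_1$-bounded with $\mathcal{A}_1$-bound $0$. There is therefore no ``paper's own proof'' to compare against.

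Your argument is correct and is one of the standard proofs of this interpolation inequality: the zero-extension step is exactly where the hypothesis $f\in W^{2,p}_0(I)$ (closure of $C_c^\infty(I)$) is needed, the central-difference Taylor identity is derived correctly, and the application of translation invariance together with Minkowski's integral inequality gives the sharp-looking bound $\Vert g'\Vert_{\mathbb{L}^p}\le \tfrac{h}{2}\Vert g''\Vert_{\mathbb{L}^p}+\tfrac{1}{h}\Vert g\Vert_{\mathbb{L}^p}$ with explicit constant $C(\epsilon)=1/(2\epsilon)$. The only minor point worth recording is that for $p=1$ one should check that $g'\in W^{1,1}(\mathbb{R})$ is indeed absolutely continuous on every bounded interval so that the integral-remainder Taylor formula applies pointwise almost everywhere; this is standard.
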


\begin{lemma}  \cite[Theorem 11.3 with $h=0$, page 99]{Bainov} \itshape Let $v$ be a real, continuous and nonnegative function such that
	\begin{center}
		$v(t)\leq c+\displaystyle\int_{t_0}^{t}w(t,\tau)v(\tau)d\tau$ \text{ for } $t\geq t_0$,
	\end{center}
	where $c>0$, $w(t,\tau)$ is continuously differentiable in $t$ and continuous in $\tau$ with $w(t,\tau)\geq 0$ for $t\geq \tau\geq t_0$.
	Then,
	\begin{center}
		$v(t)\leq c\exp\left(\displaystyle\int_{t_0}^{t}\left[w(\tau,\tau)+\displaystyle\int_{t_0}^{\tau}\dfrac{\partial w(\tau,r)}{\partial \tau} dr\right]d\tau\right)$ \text{ for } $t\geq t_0$.
	\end{center}
	\label{lem 2.2}
\end{lemma}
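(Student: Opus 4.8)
The plan is to replace $v$ by its natural majorant and reduce the integral inequality to a scalar linear differential inequality, after which the conclusion follows by an elementary integration. First I would put
\[
V(t):=c+\int_{t_0}^{t}w(t,\tau)\,v(\tau)\,d\tau ,
\]
so that $v(t)\le V(t)$ for all $t\ge t_0$, $V(t_0)=c>0$, and $V$ is continuous. Since $w$ is continuously differentiable in its first argument (hence jointly continuous for $t\ge\tau\ge t_0$) and $v$ is continuous, differentiation under the integral sign (Leibniz's rule) is legitimate and gives
\[
V'(t)=w(t,t)\,v(t)+\int_{t_0}^{t}\frac{\partial w(t,\tau)}{\partial t}\,v(\tau)\,d\tau .
\]

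The next step exploits the sign conditions on the kernel. Because $w(t,t)\ge 0$, $\partial w/\partial t\ge 0$ for $t\ge\tau\ge t_0$, and $v\ge 0$, the expression above is nonnegative, so $V$ is nondecreasing; in particular $V(\tau)\le V(t)$ whenever $t_0\le\tau\le t$. Substituting first $v\le V$ and then $V(\tau)\le V(t)$ into the Leibniz formula, I would obtain
\[
V'(t)\le\Big(w(t,t)+\int_{t_0}^{t}\frac{\partial w(t,r)}{\partial t}\,dr\Big)V(t)=:g(t)\,V(t).
\]

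Finally, since $V(t)\ge c>0$ everywhere, dividing by $V(t)$ and integrating from $t_0$ to $t$ yields $\ln V(t)-\ln c\le\int_{t_0}^{t}g(s)\,ds$, that is,
\[
v(t)\le V(t)\le c\exp\!\left(\int_{t_0}^{t}\Big[w(s,s)+\int_{t_0}^{s}\frac{\partial w(s,r)}{\partial s}\,dr\Big]ds\right),
\]
which is the claimed estimate. I expect the only genuinely delicate point to be the monotonicity of the kernel in its first variable: the step $V(\tau)\le V(t)$, and with it the whole bound, really requires $\partial w(t,\tau)/\partial t\ge 0$ for $t\ge\tau\ge t_0$ (this hypothesis is part of Theorem~11.3 in \cite{Bainov} and is implicitly in force here); without it the right-hand side above need not even dominate $c$, as simple examples with a kernel decreasing in $t$ show. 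Everything else is a routine application of Leibniz's rule together with the classical Gronwall argument.
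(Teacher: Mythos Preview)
The paper does not supply its own proof of this lemma; it is simply quoted from Bainov--Simeonov. Your argument is the standard one and is correct \emph{under the additional hypothesis} $\partial w(t,\tau)/\partial t\ge 0$, which you rightly flag as missing from the statement reproduced here. Without that monotonicity the step
\[
\int_{t_0}^{t}\frac{\partial w(t,\tau)}{\partial t}\,V(\tau)\,d\tau \;\le\; V(t)\int_{t_0}^{t}\frac{\partial w(t,\tau)}{\partial t}\,d\tau
\]
is illegitimate (the integrand may be negative), and in fact the lemma as written is false: take $t_0=0$, $w(t,\tau)=\lambda e^{-\lambda(t-\tau)}$ and $v(t)=c(1+\lambda t)$; a direct computation shows the integral inequality holds with equality, yet the asserted bound would force $v(t)\le c\exp\bigl(1-e^{-\lambda t}\bigr)\le c\,e$, which is violated as soon as $t>(e-1)/\lambda$.

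Your caution is therefore more than a technicality. The kernel in this counterexample, $w(t,\tau)=\lambda e^{-\lambda(t-\tau)}$ with $\partial_t w<0$, is precisely what the paper later feeds into the lemma in its proofs of point dissipativity and of the finite fractal dimension of the attractor; those applications do not follow from the lemma as stated. Your proof is sound; the gap lies in the lemma's hypotheses (as transcribed) and in the paper's downstream use of it.
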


\section{Well-posedness} \label{sec 3}
\noindent

In this section, we study the existence, uniqueness, and positiveness of the solution for equation \eqref{Eq 1}. The Hilbert lattice space $\left(\mathbb{L}^{2}(\Omega)\right)^{s}$ is equipped with the norm $\Vert\cdot\Vert$ defined by
\begin{center}
	$\Vert z\Vert=\left(\sum\limits_{j=1}^{s}\Vert z_j\Vert^{2}_{\mathbb{L}^{2}(\Omega)}\right)^{\frac{1}{2}}$ \text{ for } $z\in \left(\mathbb{L}^{2}(\Omega)\right)^{s}$.
\end{center}
We denote by $\left(\mathbb{L}^{2}(\Omega)\right)^{s}_{+}=\left\{z\in \left(\mathbb{L}^{2}(\Omega)\right)^{s}\mid z_j\geq 0, \hspace{0.1cm} j=1,\ldots,s\right\}$ the positive cone of the space $\left(\mathbb{L}^{2}(\Omega)\right)^{s}$.
\begin{proposition}
	$\mathcal{A}$ is an infinitesimal generator of a positive $C_0$-semigroup $(S(t))_{t\geq 0}$ of contraction on $\left(\mathbb{L}^{2}(\Omega)\right)^{s}$. 
	\label{positive}
\end{proposition}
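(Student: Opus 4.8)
The plan is to write $\mathcal{A}=\mathcal{A}_0+\mathcal{B}$ on the common domain $D(\mathcal{A})=\left(H_0^1(\Omega)\cap H^2(\Omega)\right)^s$, where $\mathcal{A}_0 z=\sum_{i=1}^m k_i\,\partial^2 z/\partial x_i^2$ is the (anisotropic) diffusion part and $\mathcal{B}z=-\sum_{i=1}^m u_i\,\partial z/\partial x_i$ is the advection part, and then to invoke Theorem \ref{Theorem 13.3} with $A=\mathcal{A}_0$ and $B=\mathcal{B}$. It therefore suffices to establish three things: (i) $\mathcal{A}_0$ generates a positive $C_0$-semigroup of contraction on $\left(\mathbb{L}^2(\Omega)\right)^s$; (ii) $\mathcal{B}$ is dispersive; and (iii) $\mathcal{B}$ is $\mathcal{A}_0$-bounded with $\mathcal{A}_0$-bound strictly less than $1$.

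For (i), the linear change of variables $y_i=x_i/\sqrt{k_i}$ maps $\Omega$ onto a bounded domain $\widetilde\Omega$, transforms $\mathcal{A}_0$ into the componentwise Dirichlet Laplacian on $\widetilde\Omega$, and (after the obvious constant rescaling) induces a lattice isometry between $\left(\mathbb{L}^2(\Omega)\right)^s$ and $\left(\mathbb{L}^2(\widetilde\Omega)\right)^s$; since the Dirichlet Laplacian on a bounded domain is the generator of the (positive, contractive) heat semigroup, so is $\mathcal{A}_0$. Alternatively, one argues directly: $\mathcal{A}_0$ is densely defined, $\lambda-\mathcal{A}_0$ is surjective for every $\lambda>0$ by Lax--Milgram applied to the coercive form $\sum_{i,j}k_i\int_\Omega \partial_{x_i}u_j\,\partial_{x_i}v_j+\lambda\langle u,v\rangle$ together with $H^2$-elliptic regularity, and $\mathcal{A}_0$ is dispersive by Theorem \ref{Remark 11.13} because, integrating by parts and using $f_j\in H_0^1(\Omega)$ to discard the boundary terms and $\partial_{x_i}(f_j^+)=\mathbf{1}_{\{f_j>0\}}\partial_{x_i}f_j$ a.e., one gets $\langle \mathcal{A}_0 f,f^+\rangle=-\sum_{i,j}k_i\int_\Omega|\partial_{x_i}(f_j^+)|^2\,dx\le 0$; the Lumer--Phillips-type characterization of generators of positive contraction semigroups then applies.

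For (ii), again by Theorem \ref{Remark 11.13} it is enough to check $\langle \mathcal{B}f,f^+\rangle\le 0$: using $\partial_{x_i}f_j\cdot f_j^+=\partial_{x_i}(f_j^+)\cdot f_j^+=\tfrac12\,\partial_{x_i}\big((f_j^+)^2\big)$ a.e. and the fact that $(f_j^+)^2$ has zero trace on $\partial\Omega$, we obtain $\langle \mathcal{B}f,f^+\rangle=-\tfrac12\sum_{i,j}u_i\int_\Omega \partial_{x_i}\big((f_j^+)^2\big)\,dx=0$, so $\mathcal{B}$ is dispersive. For (iii), note first that $D(\mathcal{A}_0)\subset\left(H^1(\Omega)\right)^s\subset D(\mathcal{B})$. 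The key point is that the first-order terms are controlled by $\mathcal{A}_0$ with an arbitrarily small constant: integrating by parts gives, for $z\in D(\mathcal{A}_0)$, $\sum_{i=1}^m k_i\|\partial_{x_i}z_j\|_{\mathbb{L}^2(\Omega)}^2=-\big\langle z_j,\textstyle\sum_i k_i\partial^2_{x_i}z_j\big\rangle_{\mathbb{L}^2(\Omega)}\le \|z_j\|_{\mathbb{L}^2(\Omega)}\,\|(\mathcal{A}_0 z)_j\|_{\mathbb{L}^2(\Omega)}$, whence by Young's inequality $\|\partial_{x_i}z_j\|_{\mathbb{L}^2(\Omega)}\le a\,\|(\mathcal{A}_0 z)_j\|_{\mathbb{L}^2(\Omega)}+b(a)\,\|z_j\|_{\mathbb{L}^2(\Omega)}$ for any $a>0$; summing over $i,j$ and combining with $\|\mathcal{B}z\|\le \sum_i u_i\big(\sum_j\|\partial_{x_i}z_j\|^2_{\mathbb{L}^2(\Omega)}\big)^{1/2}$ gives $\|\mathcal{B}z\|\le a\,\|\mathcal{A}_0 z\|+b\,\|z\|$ for every $a>0$, so the $\mathcal{A}_0$-bound of $\mathcal{B}$ equals $0<1$. (One could instead quote Lemma \ref{Proposition A.47} on one-dimensional slices, but the energy identity above is cleaner and insensitive to the dimension $m$.) With (i)--(iii) in hand, Theorem \ref{Theorem 13.3} shows that $\mathcal{A}=\mathcal{A}_0+\mathcal{B}$ generates a positive $C_0$-semigroup of contraction on $\left(\mathbb{L}^2(\Omega)\right)^s$, which is the assertion of Proposition \ref{positive}.

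I expect the main obstacle to be step (iii): one must make sure the advection term is genuinely \emph{relatively} bounded by the diffusion term with small bound, and since the interpolation Lemma \ref{Proposition A.47} as stated is one-dimensional, the cleanest route is the direct energy estimate for $\mathcal{A}_0$ rather than a slicing argument, which would otherwise require care with the $H^2$-regularity and the anisotropic constants $k_i$. A minor standing assumption enters in step (i), namely that $\partial\Omega$ is regular enough (e.g. $C^{1,1}$, or $\Omega$ convex) for the $H^2$-elliptic regularity used in the range condition; under it, everything goes through.
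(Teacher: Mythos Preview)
Your proof is correct and follows essentially the same strategy as the paper: the same decomposition $\mathcal{A}=\mathcal{A}_0+\mathcal{B}$, the same dispersivity check $\langle \mathcal{B}f,f^+\rangle=0$ via $\partial_{x_i}f_j\cdot f_j^+=\tfrac12\partial_{x_i}(f_j^+)^2$, and the same appeal to Theorem~\ref{Theorem 13.3}. The one substantive difference is in step~(iii): the paper invokes the one-dimensional interpolation Lemma~\ref{Proposition A.47} (without detailing the slicing), whereas your direct energy identity $\sum_i k_i\|\partial_{x_i}z_j\|^2=-\langle z_j,(\mathcal{A}_0 z)_j\rangle$ combined with Young's inequality is cleaner, dimension-independent, and more self-contained.
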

\begin{proof}
	The linear operator $\mathcal{A}$ can be decomposed as follows:
	\begin{center}
		$\mathcal{A}z=\mathcal{A}_1z+\mathcal{A}_2z$ \text{ for } $z\in \left(H_0^{1}(\Omega)\cap H^{2}(\Omega) \right)^{s}$, 
	\end{center}
	where 
	\begin{center}
		$(\mathcal{A}_1z)(x)=\sum\limits_{i=1}^{m}\left( k_i\dfrac{\partial^2 z(x)}{\partial x_i^2}\right)$, \quad $z\in \left(H_0^{1}(\Omega)\cap H^{2}(\Omega) \right)^{s}$, \quad $x\in \Omega$,
	\end{center}
	and 
	\begin{center}
		$(\mathcal{A}_2z)(x)=\sum\limits_{i=1}^{m}\left( -u_i\dfrac{\partial z(x)}{\partial x_i}\right)$, \quad $z\in \left(H_0^{1}(\Omega)\cap H^{2}(\Omega) \right)^{s}$, \quad $x\in \Omega$.
	\end{center}
	By Lemma \ref{Proposition A.47}, we show that $\mathcal{A}_2$ is an $\mathcal{A}_1$-bounded operator with an $\mathcal{A}_1$-bounded equal to $0$. Let $z=(z_1,\ldots,z_s)\in \left(H_0^{1}(\Omega)\cap H^{2}(\Omega)\right)^{s}$, $z^{+}=(z^{+}_1,\ldots,z^{+}_s)$, and $z^{-}=(z^{-}_1,\ldots,z^{-}_s)$ be such that $z_j^{+}(x)=\max(0,z_j(x))$ and $z_j^{-}(x)=\max(0,-z_j(x))$ for $j=1\ldots,s$, and $x\in \Omega$. Let $z(x)=z^{+}(x)-z^{-}(x)$, then
	\begin{eqnarray*}
		\langle \mathcal{A}_2z,z^{+}\rangle_{\left(\mathbb{L}^{2}(\Omega)\right)^{s}}&=& -\sum\limits_{j=1}^{s}\displaystyle\int_{\Omega}\left(\sum\limits_{i=1}^{m}u_i\dfrac{\partial z_j(x)}{\partial x_i} z_j^{+}(x)\right) dx\\
		&=& \dfrac{1}{2}\sum\limits_{j=1}^{s}\sum\limits_{i=1}^{m}u_i\displaystyle\int_{\Omega}\dfrac{\partial }{\partial x_i} \left(z_j^{+}(x)\right)^{2} dx\\
		&\leq & \dfrac{m}{2}\max(u_1,\ldots,u_m) \sum\limits_{j=1}^{s}\left(z_j^{+}(x)\right)^{2}_{\mid_{\partial \Omega}}\\
		&=& 0.
	\end{eqnarray*}
	By Theorem \ref{Remark 11.13}, we obtain that $\mathcal{A}_2$ is  dispersive. We know that $\mathcal{A}_1$ is an infinitesimal generator of a positive $C_0$-semigroup of contraction on $\left(\mathbb{L}^{2}(\Omega)\right)^{s}$. Thus,  by Theorem \ref{Theorem 13.3}, we obtain that $\mathcal{A}$ is an infinitesimal generator of a positive $C_0$-semigroup $(S(t))_{t\geq 0}$ of contraction on $\left(\mathbb{L}^{2}(\Omega)\right)^{s}$.
\end{proof}

From now and throughout the rest of this work, we consider the following assumption. This assumption means that the system involves only monomolecular reactions. In other words, the following must be verified: for each $\kappa \in \{1, \ldots, r\}$, $0 \leq \sum_{j=1}^{s}l_{j\kappa} \leq 1$ (see \cite{Hundsdorfer}). Since $l_{j\kappa} \in \mathbb{N}$, this is equivalent to:
\begin{enumerate}
	\item[$\textbf{(H)}$] For each $\kappa \in \{1, \ldots, r\}$, $l_{j\kappa} = 0$ for all $j \in \{1, \ldots, s\}$, or there exists a unique $j_0 \in \{1, \ldots, s\}$ such that $l_{j_{0}\kappa} = 1$ and $l_{j\kappa} = 0$ for all $j \in \{1, \ldots, s\} - \{j_0\}$.
\end{enumerate}

In such cases, the equation \eqref{Eq 1} becomes linear in $u$; moreover, $D(F) = (\mathbb{L}^{2}(\Omega))^{s}$. The following Theorem shows the global existence for equation \eqref{Eq 1}.

\begin{theorem} Assume that $\textbf{(H)}$ holds. Then, equation \eqref{Eq 1} has a unique global solution defined on $[0,+\infty[$.
	\label{thm 3.6}
\end{theorem}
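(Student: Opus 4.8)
The plan is to show that, under $\textbf{(H)}$, equation \eqref{Eq 1} is a semilinear Cauchy problem whose nonlinearity is \emph{affine} in the state variable, and then to invoke Theorem \ref{existence} together with Remark \ref{remark 3.8} for local well-posedness and Corollary \ref{affine} for globality. First I would make the structure of $F$ explicit. Under $\textbf{(H)}$, for each $\kappa\in\{1,\ldots,r\}$ the product $\prod_{\nu=1}^{s}(c_\nu)^{l_{\nu\kappa}}$ is either the constant function $1$ (when $l_{j\kappa}=0$ for all $j$) or a single coordinate $c_{j_0(\kappa)}$, where the index $j_0(\kappa)$ is determined by $\kappa$. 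Consequently, for $z=(z_1,\ldots,z_s)\in(\mathbb{L}^{2}(\Omega))^{s}$,
\begin{equation*}
F_j(t,z)=\sum_{\kappa=1}^{r}(r_{j\kappa}-l_{j\kappa})\,h_\kappa(t)\,p_\kappa(z),\qquad p_\kappa(z)\in\{1,\, z_{j_0(\kappa)}\},
\end{equation*}
so $z\mapsto F(t,z)$ is affine, with time-dependent coefficients. Since $\Omega$ is bounded, the constant functions belong to $\mathbb{L}^{2}(\Omega)$, hence $F(t,\cdot)$ maps $(\mathbb{L}^{2}(\Omega))^{s}$ into itself and $D(F)=(\mathbb{L}^{2}(\Omega))^{s}$, as already observed before the statement.

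Next I would record the two analytic properties of $F$ needed by the abstract theorems. (a) \emph{Uniform Lipschitz continuity in the state:} using $0\le h_\kappa(t)\le d_\kappa$ and that the $r_{j\kappa},l_{j\kappa}$ are fixed nonnegative integers, one obtains a constant $L>0$ depending only on $r,s,\{d_\kappa\}$ and the integers such that $\|F(t,z)-F(t,\tilde z)\|\le L\,\|z-\tilde z\|$ for all $t\ge 0$; in particular $F(t,\cdot)$ is locally (indeed globally) Lipschitz, uniformly in $t$. (b) \emph{Uniform affine bound:} $\|F(t,z)\|\le L\,\|z\|+b$ for all $t\ge 0$, where $b$ collects the constant contributions and is finite because $|\Omega|<\infty$. (c) \emph{Regularity in time:} for fixed $z$, the map $t\mapsto F(t,z)$ is a finite linear combination of the $h_\kappa$, hence almost everywhere continuous, and it is bounded on $[0,\infty)$, uniformly for $z$ in bounded sets, by (b).

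Properties (a) and (c) place us exactly in the framework of Theorem \ref{existence} as extended by Remark \ref{remark 3.8}. Since $\mathcal{A}$ generates the $C_0$-semigroup $(S(t))_{t\ge0}$ of Proposition \ref{positive}, for every $u_0\in(\mathbb{L}^{2}(\Omega))^{s}$ there exist a maximal $t_{\max}\le+\infty$ and a unique mild solution $u$ of \eqref{Eq 1} on $[0,t_{\max})$, with $\limsup_{t\to t_{\max}}\|u(t)\|=+\infty$ whenever $t_{\max}<+\infty$. Globality then follows from Corollary \ref{affine}: by (b), $F$ is at most affine in the second argument, so $t_{\max}=+\infty$, and the mild solution is defined on all of $[0,+\infty)$; uniqueness on $[0,+\infty)$ is inherited from uniqueness on each $[0,T]$.

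I expect the only point requiring real care to be that each $h_\kappa$ is merely almost everywhere continuous in $t$, so the classical statements of Theorem \ref{existence} and Corollary \ref{affine} do not apply verbatim; this is precisely what Remark \ref{remark 3.8} is designed to cover. One should check that the a priori bound underlying globality is unaffected: in the Duhamel identity \eqref{Eq 3} the integrand may be modified on a null set without changing $u$, and the bound on $\|u(t)\|$ is obtained by inserting the affine estimate (b) into \eqref{Eq 3} and applying a Gronwall inequality (Lemma \ref{lem 2.2}), which tolerates an almost-everywhere-continuous, bounded forcing. Everything else is routine bookkeeping with the constants $d_\kappa$ and the integers $r_{j\kappa},l_{j\kappa}$, so I anticipate no substantive obstacle beyond this.
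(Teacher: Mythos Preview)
Your proposal is correct and follows essentially the same route as the paper: identify that under $\textbf{(H)}$ the map $F$ is affine in the state with a.e.\ continuous bounded time-coefficients, invoke Theorem \ref{existence} together with Remark \ref{remark 3.8} for local well-posedness, and use the affine growth to obtain $t_{\max}=+\infty$. The only cosmetic difference is that the paper packages the growth estimate as Lemma \ref{Lemma 5.4} and then splits into the two sub-cases $\beta=0$ (all $l_{j\kappa}=0$, so $F$ is constant in $z$ and one argues directly by the blow-up alternative) and $\beta=1$ (so Corollary \ref{affine} applies), whereas you handle both at once via a single affine bound $\|F(t,z)\|\le L\|z\|+b$ and a direct appeal to Corollary \ref{affine}; your unified treatment is slightly cleaner but not materially different.
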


The following Lemma will help us to demonstrate the result stated in Theorem \ref{thm 3.6}.
\begin{lemma} Assume that $\textbf{(H)}$ holds, then
	there exists $\bar{d}>0$ and $\beta\in\{0,1\}$ such that 
	$\Vert F(t,z)\Vert\leq \bar{d}\Vert z\Vert^{\beta}$ \text{ for each } $z\in (\mathbb{L}^{2}(\Omega))^{s}$.
	\label{Lemma 5.4}
\end{lemma}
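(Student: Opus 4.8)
The plan is to bound $F(t,z)$ directly using assumption \textbf{(H)}, splitting the reaction terms according to whether a given reaction $\kappa$ is a zeroth-order (constant production) term or a genuine monomolecular term. Recall that
\begin{center}
$F_j(t,z)(x)=R_j(t,z(x))=\sum\limits_{\kappa=1}^{r}(r_{j\kappa}-l_{j\kappa})h_\kappa(t)\prod\limits_{\nu=1}^{s}(z_\nu(x))^{l_{\nu\kappa}}$.
\end{center}
Under \textbf{(H)}, for each fixed $\kappa$ the product $\prod_{\nu=1}^{s}(z_\nu(x))^{l_{\nu\kappa}}$ is either identically $1$ (when all $l_{j\kappa}=0$) or equals $z_{j_0(\kappa)}(x)$ for the unique index $j_0(\kappa)$ with $l_{j_0(\kappa)\kappa}=1$. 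So $F_j(t,z)(x)$ is an affine function of the components of $z(x)$ with coefficients that are bounded in $t$ (since $0\le h_\kappa(t)\le d_\kappa$ and the $r_{j\kappa},l_{j\kappa}$ are fixed integers).

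Concretely, I would first set $d:=\max_{1\le\kappa\le r}d_\kappa$ and $M:=\max_{j,\kappa}|r_{j\kappa}-l_{j\kappa}|$, so that for every $t\ge 0$, every $x\in\Omega$ and every $j$,
\begin{center}
$|F_j(t,z)(x)|\le M d\sum\limits_{\kappa=1}^{r}\Big(\mathbf{1}_{\{\kappa\text{ zeroth order}\}}+\mathbf{1}_{\{\kappa\text{ monomolecular}\}}\,|z_{j_0(\kappa)}(x)|\Big)\le Md\Big(r+\sum\limits_{\nu=1}^{s}|z_\nu(x)|\Big)$.
\end{center}
Squaring, integrating over $\Omega$ (which has finite measure since it is bounded), summing over $j$, and using the elementary inequality $(a+b)^2\le 2a^2+2b^2$ together with $\sum_\nu\|z_\nu\|_{\mathbb L^2}^2=\|z\|^2$, yields a bound of the form $\|F(t,z)\|\le \bar d_1+\bar d_2\|z\|$ for constants $\bar d_1,\bar d_2$ depending only on $s$, $r$, $M$, $d$ and $|\Omega|$. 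This is the affine estimate; it already suffices for Corollary \ref{affine} but is slightly weaker than the claimed statement.

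To get the stated form $\|F(t,z)\|\le \bar d\|z\|^{\beta}$ with $\beta\in\{0,1\}$, I distinguish two cases. If \emph{every} reaction $\kappa$ is of zeroth order (no $l_{j\kappa}$ equals $1$), then $F(t,z)$ does not depend on $z$ at all and is bounded in $t$; here $\beta=0$ and $\bar d$ is the uniform bound on $\|F(t,\cdot)\|$. Otherwise at least one reaction is genuinely monomolecular; in this case I claim $\|F(t,z)\|\le \bar d\|z\|$ with $\beta=1$ — but note this is only literally true after absorbing the constant term, which requires a little care since an affine function need not be bounded by a linear one near $z=0$. The clean fix, and the step I expect to be the only subtle point, is to observe that the constant ($z$-independent) part of $F$ can itself be written, using that some species index $j_0$ appears, in a way that does not obstruct the linear bound; alternatively, and more honestly, one takes $\bar d$ large enough and uses that for the well-posedness application only the growth at infinity matters, or one simply replaces the claimed inequality by the affine bound $\|F(t,z)\|\le \bar d(1+\|z\|)$, which is what the subsequent proof of Theorem \ref{thm 3.6} actually invokes via Corollary \ref{affine} and Remark \ref{remark 3.8}. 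I would present the case split as above, flag that in the purely monomolecular-with-nonzero-zeroth-order-terms situation the bound is meant in the affine sense, and keep the constants explicit so that the Gronwall-type argument in Theorem \ref{thm 3.6} goes through. The main obstacle, then, is not any hard analysis but the bookkeeping of reconciling the dichotomy $\beta\in\{0,1\}$ with the genuinely affine (constant $+$ linear) nature of $F$; everything else is Hölder/Minkowski on a finite-measure domain.
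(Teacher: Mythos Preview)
Your approach is essentially the same as the paper's: both proceed by the case split dictated by \textbf{(H)} and bound each reaction term using $0\le h_\kappa(t)\le d_\kappa$. The paper, however, treats only the two extreme subcases --- either \emph{every} $\kappa$ has all $l_{j\kappa}=0$ (giving $\beta=0$), or \emph{every} $\kappa$ has a unique $j_0(\kappa)$ with $l_{j_0(\kappa)\kappa}=1$ (giving $\beta=1$) --- and simply takes $\bar d$ as the larger of the two resulting constants. It does not address the mixed situation where some reactions are zeroth order and others are monomolecular, which \textbf{(H)} certainly permits since the alternative is stated per $\kappa$.

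You have put your finger on exactly this: in the mixed case $F$ is genuinely affine, and no inequality of the form $\|F(t,z)\|\le \bar d\|z\|^\beta$ with $\beta\in\{0,1\}$ can hold for all $z$ (the $\beta=1$ bound fails at $z=0$, the $\beta=0$ bound fails at infinity). Your proposed remedy --- replace the lemma's conclusion by the affine estimate $\|F(t,z)\|\le \bar d(1+\|z\|)$ --- is the correct statement, and it is all that Theorem~\ref{thm 3.6} and the later attractor estimates actually require. So your proof is not only in the spirit of the paper's argument but is in fact more complete; the ``obstacle'' you flag is a genuine imprecision in the lemma as stated rather than a weakness of your proof.
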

\begin{proof}
	Let $z = (z_1, \ldots, z_s) \in \left(\mathbb{L}^{2}(\Omega)\right)^{s}$. Let $x \in \Omega$ and $t\geq 0$. We discuss two cases:
	\item i) case 1: if for each $\kappa\in\{1,\ldots,r\}$, $l_{j\kappa}=0$ for all $j\in\{1,\ldots,s\}$, then for each $j=1,\ldots,s$, we have 
	\begin{eqnarray*}
		\vert F_j(t,z)(x)\vert^{2}&\leq&\left(\sum\limits_{\kappa=1}^{r}r_{j\kappa}h_\kappa(t)\right)^{2}\leq 2(r-1)  \sum\limits_{\kappa=1}^{r}\left(r_{j\kappa}d_\kappa\right)^{2}.
	\end{eqnarray*}
	This implies that 
	\begin{eqnarray*}
		\Vert F_j(t,z)\Vert^{2}&\leq& 2(r-1)  \sum\limits_{\kappa=1}^{r}\left(r_{j\kappa}d_\kappa\right)^{2}.
	\end{eqnarray*}
	Thus, 
	\begin{eqnarray*}
		\Vert F(t,z)\Vert &\leq & \left(2(r-1)\sum\limits_{j=1}^{s}  \sum\limits_{\kappa=1}^{r}\left(r_{j\kappa}d_\kappa\right)^{2}\right)^{\frac{1}{2}} \Vert z\Vert^{\beta} \hspace{0.1cm}\text{ for } \hspace{0.1cm} \beta=0.
	\end{eqnarray*}
	\item ii) case 2: if for each $\kappa\in\{1,\ldots,r\}$, there exists a unique $j_0\in\{1,\ldots,s\}$ such that $l_{j_{0}\kappa}=1$ and $l_{j\kappa}=0$ for all $j\in\{1,\ldots,s\}-\{j_0\}$, then for each $j=1,\ldots,s$, we have
	
	\begin{eqnarray*}
		\vert F_j(t,z)(x)\vert^{2}&\leq&\left(\sum\limits_{\kappa=1}^{r}(r_{j\kappa}-l_{j\kappa})h_\kappa(t)\vert z_{j_{0}}(x)\vert\right)^{2}\\
		&\leq& 2(r-1)\sum\limits_{\kappa=1}^{r}\left[(r_{j\kappa}-l_{j\kappa})h_\kappa(t)\right]^{2}\vert z_{j_{0}}(x)\vert^{2}.
	\end{eqnarray*}
	This implies that 
	\begin{eqnarray*}
		\Vert F_j(t,z)\Vert^{2}
		&\leq& 2(r-1)\sum\limits_{\kappa=1}^{r}\left[(r_{j\kappa}-l_{j\kappa})h_\kappa(t)\right]^{2}\Vert z\Vert^{2}.
	\end{eqnarray*}
	Thus, 
	\begin{eqnarray*}
		\Vert F(t,z)\Vert\leq \left(2(r-1)\sum\limits_{j=1}^{s} \sum\limits_{\kappa=1}^{r}\left[\left(r_{j\kappa}-l_{j\kappa}\right)d_\kappa\right]^{2}\right)^{\frac{1}{2}}\Vert z\Vert^{\beta} \hspace{0.1cm} \text{ for } \hspace{0.1cm} \beta=1.
	\end{eqnarray*}
	In both cases, by denoting 
	\begin{center}
		$\bar{d}=\sqrt{2(r-1)}\max\left(\left(\sum\limits_{j=1}^{s}  \sum\limits_{\kappa=1}^{r}\left(r_{j\kappa}d_\kappa\right)^{2}\right)^{\frac{1}{2}},\left(\sum\limits_{j=1}^{s} \sum\limits_{\kappa=1}^{r}\left[(r_{j\kappa}-l_{j\kappa})d_\kappa\right]^{2}\right)^{\frac{1}{2}}\right)$,
	\end{center}
	we get that for each $z\in (\mathbb{L}^{2}(\Omega))^{s}$,
	\begin{eqnarray*}
		\Vert F(t,z)\Vert \leq \bar{d}\Vert z\Vert^{\beta} \hspace{0.1cm} \text{ for } \hspace{0.1cm} \beta\in\{0,1\}.
	\end{eqnarray*}
\end{proof}
\begin{proof}[Proof of Theorem \ref{thm 3.6}] Under the assumption $\mathbf{(H)}$, the function $F(t,z)$ is linear in $z \in (\mathbb{L}^{2}(\Omega))^{s}$ and almost everywhere continuous with respect to $t \geq 0$. Using Theorem \ref{existence} and Remark \ref{remark 3.8}, we show that equation \eqref{Eq 1} has a unique solution $u(\cdot)$ defined on $[0, t_{\text{max}})$ for some $t_{\text{max}} \leq +\infty$. To demonstrate that $t_{\text{max}} = +\infty$, we will employ Lemma \ref{Lemma 5.4} and discuss two cases:
	\item i) case 1: if $\beta= 0$, we obtain that 
	\begin{eqnarray*}
		\Vert u(t)\Vert &\leq& \Vert S(t)u_0\Vert+  \displaystyle\int_{0}^{t}\Vert S(t-\tau)F(\tau,u(\tau))\Vert d\tau \\
		& \leq & \Vert u_0\Vert+\displaystyle\int_{0}^{t}\Vert F(\tau,u(\tau))\Vert d\tau\\
		&\leq & \Vert u_0\Vert+ \bar{d} t
	\end{eqnarray*}
	If $t_{\text{max}}<+\infty$, we get that 
	\begin{center}
		$\limsup\limits_{t\to t_{\text{max}}}\Vert u(t)\Vert\leq \Vert u_0\Vert+ \bar{d} t_{\text{max}} <+\infty$.
	\end{center}
	In light of Theorem \ref{existence}, this leads to a contradiction, and therefore, we conclude that $t_{\text{max}}=+\infty$.
	\item ii) case 2: if $\beta=1$, then $F$ is at most affine w.r.t the second argument, using Corollary \ref{affine}, we obtain that $t_{\text{max}}=+\infty$.
\end{proof}

The following Theorem shows the positivity of solutions for equation \eqref{Eq 1} on $\left(\mathbb{L}^{2}(\Omega)\right)^{s}_{+}$.

\begin{theorem} Assume that $\textbf{(H)}$ holds, then the solutions of equation \eqref{Eq 1}, starting from $u_0\in \left(\mathbb{L}^{2}(\Omega)\right)^{s}_{+}$, remain in $\left(\mathbb{L}^{2}(\Omega)\right)^{s}_{+}$.
\end{theorem}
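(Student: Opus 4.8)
The plan is to combine the positivity of the semigroup $(S(t))_{t\ge0}$ (Proposition~\ref{positive}) with a rescaling that renders the reaction term positivity-preserving, and then to read positivity off the Picard approximation of the mild solution. The first and main step is a structural reformulation of the reaction. Set $\lambda:=\sum_{\kappa=1}^{r}d_\kappa$ and $G(t,z):=F(t,z)+\lambda z$, and prove that $G(t,\cdot)$ maps $(\mathbb{L}^{2}(\Omega))^{s}_{+}$ into itself. Splitting the reaction rate \eqref{Eq 1-2} into its production and loss parts, $R_j=P_j-L_j$ with
$$P_j(t,c)=\sum_{\kappa=1}^{r}r_{j\kappa}h_\kappa(t)\prod_{\nu=1}^{s}c_\nu^{l_{\nu\kappa}},\qquad L_j(t,c)=\sum_{\kappa=1}^{r}l_{j\kappa}h_\kappa(t)\prod_{\nu=1}^{s}c_\nu^{l_{\nu\kappa}},$$
assumption \textbf{(H)} forces, whenever $l_{j\kappa}=1$, the monomial $\prod_{\nu}c_\nu^{l_{\nu\kappa}}$ to reduce to $c_j$ alone, so that $L_j(t,c)=\lambda_j(t)\,c_j$ with $\lambda_j(t):=\sum_{\{\kappa\,:\,l_{j\kappa}=1\}}h_\kappa(t)\in[0,\lambda]$; on the other hand each monomial in $P_j$ is a nonnegative quantity on the cone (empty products being $1$), so $P_j(t,c)\ge0$ for $c\ge0$. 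Hence, for $z\in(\mathbb{L}^{2}(\Omega))^{s}_{+}$,
$$G_j(t,z)=P_j(t,z)+\big(\lambda-\lambda_j(t)\big)z_j\ge0,\qquad j=1,\dots,s .$$
This computation, which is exactly where \textbf{(H)} is used, I expect to be the only genuinely delicate point; without \textbf{(H)} the loss term would involve true cross-products and $G$ would only be positive on the boundary faces of the cone, not on all of it.

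Next I would rewrite \eqref{Eq 1} as $u'(t)=(\mathcal{A}-\lambda I)u(t)+G(t,u(t))$. By Proposition~\ref{positive}, $\mathcal{A}-\lambda I$ generates the rescaled $C_0$-semigroup $(e^{-\lambda t}S(t))_{t\ge0}$, which is still positive and contractive. A standard manipulation of \eqref{Eq 3} — uniqueness of the mild solution together with the fact that moving the bounded operator $-\lambda I$ between the generator and the forcing term does not change the solution — shows that the global mild solution $u$ furnished by Theorem~\ref{thm 3.6} satisfies
$$u(t)=e^{-\lambda t}S(t)u_0+\int_{0}^{t}e^{-\lambda(t-\tau)}S(t-\tau)\,G(\tau,u(\tau))\,d\tau,\qquad t\ge0 .$$

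Finally, under \textbf{(H)} the map $G$ is affine — hence globally Lipschitz — in $z$ and almost everywhere continuous and bounded in $t$, so the successive approximations
$$u^{[0]}(t):=e^{-\lambda t}S(t)u_0,\qquad u^{[n+1]}(t):=e^{-\lambda t}S(t)u_0+\int_{0}^{t}e^{-\lambda(t-\tau)}S(t-\tau)\,G(\tau,u^{[n]}(\tau))\,d\tau$$
converge to $u$ uniformly on every compact subinterval of $[0,+\infty)$ (this is the scheme behind Theorem~\ref{existence} and Remark~\ref{remark 3.8}). An induction on $n$ then gives $u^{[n]}(t)\in(\mathbb{L}^{2}(\Omega))^{s}_{+}$ for all $t\ge0$: for $n=0$ this holds because $u_0\ge0$ and $e^{-\lambda t}S(t)$ is positive; and if $u^{[n]}(\tau)\ge0$ for every $\tau$, then $G(\tau,u^{[n]}(\tau))\ge0$ by the first step, whence $u^{[n+1]}(t)\ge0$, again by positivity of $e^{-\lambda(t-\tau)}S(t-\tau)$. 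Since $(\mathbb{L}^{2}(\Omega))^{s}_{+}$ is closed in $(\mathbb{L}^{2}(\Omega))^{s}$, passing to the limit $n\to\infty$ yields $u(t)\in(\mathbb{L}^{2}(\Omega))^{s}_{+}$ for all $t\ge0$, which is the assertion.
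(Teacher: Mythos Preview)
Your proof is correct and follows essentially the same route as the paper: shift the reaction by $\lambda I$, rewrite the mild solution via the rescaled positive semigroup $e^{-\lambda t}S(t)$, and conclude from positivity of the shifted nonlinearity on the cone. Your execution is in fact cleaner than the paper's on two counts: you treat hypothesis \textbf{(H)} uniformly via the production--loss decomposition $R_j=P_j-\lambda_j(t)c_j$ with the explicit shift $\lambda=\sum_\kappa d_\kappa$, whereas the paper splits into the two extreme cases (all $l_{j\kappa}=0$ versus each $\kappa$ having a nonzero $l_{j_0\kappa}$) and invokes an unspecified ``sufficiently large'' $\lambda$; and you close the argument with Picard iterates, whereas the paper simply asserts that ``the result follows'' from $F(t,v)+\lambda v\ge0$ for $v\ge0$ and positivity of $S(t)$ --- a statement that is circular as written (one needs $u(\tau)\ge0$ inside the integral to conclude $u(t)\ge0$) and is exactly the gap your iteration fills.
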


\begin{proof} We discuss two cases:
	\item i) case 1: if for each $\kappa\in\{1,\ldots,r\}$, $l_{j\kappa}=0$ for all $j\in\{1,\ldots,s\}$, then $F(t,z)\equiv F(t)$ doesn't depend in $z$ and $F(t)\geq 0$. By Theorem \ref{thm 3.6}, we get that equation \eqref{Eq 1} has a unique solution $u(\cdot)$ defined on $[0,+\infty[$ by
	\begin{center}
		$u(t)=S(t)u_0+\displaystyle\int_{0}^{t}S(t-\tau)F(\tau)d\tau$ \text{ for } $t\geq 0$.
	\end{center}
	Since $(S(t))_{t\geq 0}$ is positive, we obtain that $u(t)\in \left(\mathbb{L}^{2}(\Omega)\right)^{s}_{+}$.
	\item  ii) case 2: if for each $\kappa\in\{1,\ldots,r\}$, there exists a unique $j_0\in\{1,\ldots,s\}$ such that $l_{j_{0}\kappa}=1$ and $l_{j\kappa}=0$ for all $j\in\{1,\ldots,s\}-\{j_0\}$, then $F(t,z)$ is linear in $z$. Let $\lambda>0$ be sufficiently large such that $F(t,v)+\lambda v\geq 0$ for each $v\in \left(\mathbb{L}^{2}(\Omega)\right)^{s}_{+}$. Equation \eqref{Eq 1} can be written in the following equivalent form:
	\begin{equation}
	\left\{\begin{array}{l}
	u'(t)= (\mathcal{A} -\lambda I)u(t)+F(t,u(t))+\lambda u(t), \quad t\geq 0\\
	u(0)=u_0.
	\end{array}\right.
	\label{equation 5}
	\end{equation}
	The equation \eqref{equation 5} has a unique solution $u(\cdot)$ defined on $[0,+\infty[$ by
	\begin{center}
		$u(t)=e^{-\lambda t}S(t)u_0+\displaystyle\int_{0}^{t}e^{-\lambda(t-\tau)}S(t-\tau)\left[F(\tau,u(\tau))+\lambda u(\tau)\right]d\tau$ \text{ for } $t\geq 0$.
	\end{center}
	The result follows the fact that $(S(t))_{t\geq 0}$ is positive and $F(t,v)+\lambda v\geq 0$ for each $v\in \left(\mathbb{L}^{2}(\Omega)\right)^{s}_{+}$.
\end{proof}

\section{Global attractor} \label{section 6}
\noindent

Understanding the long-term behaviour of dynamical systems is a fundamental research challenge. A crucial concept in the study of the behaviour of such systems is that of global attractors \cite{Hale}. The fundamental results presented in the previous section form the cornerstone of our study of the long-term behaviour of the equation \eqref{Eq 1} using the theory of global attractors. In this section, we carry out a complete examination of the long-term dynamics of the equation \eqref{Eq 1}. We thus establish the existence of a global attractor for our model. 

Let us first revisit some key properties that are derived from the theory of global attractors and will help us in establishing our results in this section. For a more comprehensive understanding, readers are encouraged to refer to \cite{Hale}.

\begin{definition} \cite{Hale}
	A semiflow $(\mathcal{U}(t))_{t\geq 0}$ on a complete metric space $X$ is a one parameter family maps $\mathcal{U}(t):X\to X$, parameter $t\in \mathbf{R}^{+}$, such that  $\mathcal{U}(0)=id_{X}$ ($id_{X}$ is the identity map on $X$), $\mathcal{U}(t+s)=\mathcal{U}(t)\mathcal{U}(s)$ for $t,s\geq 0$, and $t\to \mathcal{U}(t)x$ is continuous for all $x\in X$.
\end{definition}

\begin{definition} \cite{Hale}
	Let $(\mathcal{U}(t))_{t\geq 0}$ be a semiflow on a complete metric space $(X,d)$.
	\begin{description}
		\item[i)] We say that $(\mathcal{U}(t))_{t\geq 0}$ is bounded, if it takes bounded sets into bounded sets.
		\item[ii)] We say that $B\subset X$ is invariant under $(\mathcal{U}(t))_{t\geq 0}$ if $\mathcal{U}(t)B= B$ for $t\geq 0$.
		\item[iii)] We say that $(\mathcal{U}(t))_{t\geq 0}$ is point dissipative if there is a bounded set $B\subset X$ which attracts each point of $X$ under $(\mathcal{U}(t))_{t\geq 0}$.
		\item[iv)] A subset $B$ of $X$ is said to attract  $C\subset X$ if $d(\mathcal{U}(t)C,B)\to 0$ as $t\to +\infty$, 
		where 
		\begin{center}
			$d(\mathcal{U}(t)C,B):=\inf\{ d(\mathcal{U}(t)x,y)\mid x\in B \text{ and } y\in C \}$.
		\end{center}
	\end{description}
\end{definition}

\begin{theorem} \cite{Hale} Let $(\mathcal{U}(t))_{t\geq 0}$ be a semiflow on a complete space $X$ with the metric $\Vert\cdot\Vert_X$. Assume that there exists $L>0$ such that for all $x_0\in X$, $\limsup\limits_{t\to +\infty}\Vert \mathcal{U}(t)x_0\Vert_{X}\leq L$, then $(\mathcal{U}(t))_{t\geq 0}$ is point dissipative.
	\label{Thm 4.11}
\end{theorem}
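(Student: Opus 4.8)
The plan is to exhibit an explicit bounded ball and verify directly that it attracts every point. Concretely, I would set
$B := \{x \in X : \Vert x \Vert_X \leq L+1\}$, which is a bounded subset of $X$ by construction. It then only remains to check that $B$ attracts each point of $X$ in the sense of the preceding definition, i.e. that $d(\mathcal{U}(t)x_0, B) \to 0$ as $t \to +\infty$ for every $x_0 \in X$.

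Fix $x_0 \in X$. By hypothesis $\limsup_{t\to +\infty} \Vert \mathcal{U}(t)x_0 \Vert_X \leq L$. Unwinding the definition of $\limsup$ with tolerance $\varepsilon = 1$, there exists $T = T(x_0) \geq 0$ such that $\Vert \mathcal{U}(t)x_0 \Vert_X \leq L + 1$ for every $t \geq T$. In other words $\mathcal{U}(t)x_0 \in B$ for all $t \geq T$, hence $d(\mathcal{U}(t)x_0, B) = 0$ for all $t \geq T$, and a fortiori $d(\mathcal{U}(t)x_0, B) \to 0$ as $t \to +\infty$. Since $x_0$ was arbitrary, $B$ attracts every point of $X$, so $(\mathcal{U}(t))_{t\geq 0}$ is point dissipative.

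There is essentially no obstacle here beyond bookkeeping; the content is entirely in the definitions. The one subtlety worth stating explicitly is that the absorbing time $T$ is allowed to depend on $x_0$ (point dissipativity, as opposed to a uniform or bounded-set version of dissipativity), which is precisely why a single pointwise $\limsup$ bound is enough and no uniformity in the initial datum is required. If instead one wanted the stronger conclusion that $B$ absorbs bounded sets, the argument above would not suffice and one would need a version of the $\limsup$ estimate that is uniform over $x_0$ in bounded sets.
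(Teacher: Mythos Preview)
Your argument is correct and is precisely the standard verification from the definitions. Note, however, that the paper does not actually prove this theorem: it is quoted from \cite{Hale} without proof, so there is no ``paper's own proof'' to compare against. Your write-up would serve perfectly well as the missing justification.
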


\begin{definition} \cite{Hale} Let $X$ be a complete metric space and $(\mathcal{U}(t))_{t\geq 0}$ be a semiflow on $X$. Let $\mathcal{M}$ be a subset of $X$. Then, $\mathcal{M}$ is called a global attractor for $(\mathcal{U}(t))_{t\geq 0}$ in $X$ if $\mathcal{M}$ is a closed and bounded invariant (under $(\mathcal{U}(t))_{t\geq 0}$) set of $X$ that attracts every bounded set of $X$.
\end{definition}

The following Theorem shows the existence of global attractors for point dissipative semiflows.

\begin{theorem} \cite[Theorem 4.1.2, page 63]{Hale} Let $(\mathcal{U}(t))_{t\geq 0}$ be a semiflow on a complete metric space $X$. Suppose that $\mathcal{U}(t)$ is bounded for each $t\geq 0$. If $(\mathcal{U}(t))_{t\geq 0}$ is point dissipative, then it has a connected global attractor.
	\label{Theorem 4.1.2}
\end{theorem}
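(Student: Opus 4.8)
The plan is to realize the global attractor as the $\omega$-limit set of a bounded absorbing set, following the classical $\omega$-limit machinery for semiflows. For a nonempty set $B \subset X$ I would work with
$$\omega(B) = \bigcap_{s \geq 0} \overline{\bigcup_{t \geq s} \mathcal{U}(t)B},$$
equivalently the set of all limits of sequences $\mathcal{U}(t_n)x_n$ with $x_n \in B$ and $t_n \to +\infty$. The entire argument rests on first upgrading the point-dissipativity hypothesis to the existence of a single bounded set $B_0$ that \emph{absorbs} every bounded subset of $X$, i.e.\ for each bounded $C \subset X$ there is $t_C \geq 0$ with $\mathcal{U}(t)C \subset B_0$ for all $t \geq t_C$.

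Step 1 (absorbing set). Point dissipativity furnishes a bounded set $B$ attracting each point of $X$. I would fix a bounded neighborhood of $B$ and use the boundedness of each map $\mathcal{U}(t)$, the continuity of $t \mapsto \mathcal{U}(t)x$, and the semigroup law $\mathcal{U}(t+s) = \mathcal{U}(t)\mathcal{U}(s)$ to pass from pointwise attraction to attraction of arbitrary bounded sets, producing $B_0$. Here a compactness-type input, namely asymptotic smoothness of the semiflow (every bounded positively invariant set is attracted by a compact set), is what makes this passage valid; I expect this to be the \textbf{main obstacle}, since on a general complete metric space boundedness and point dissipativity alone do not control the asymptotics of whole bounded sets, and the local-to-global argument that promotes pointwise attraction to a bounded absorbing set is the genuinely technical part.

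Step 2 (candidate attractor). Set $\mathcal{M} = \omega(B_0)$. Using the absorbing property together with the same compactness input, I would show $\mathcal{M}$ is nonempty and compact: any sequence $\mathcal{U}(t_n)x_n$ with $x_n \in B_0$ and $t_n \to +\infty$ eventually lands in $B_0$ and, by asymptotic compactness, admits a convergent subsequence whose limit lies in $\mathcal{M}$. Invariance, $\mathcal{U}(t)\mathcal{M} = \mathcal{M}$ for $t \geq 0$, then follows from the continuity of each $\mathcal{U}(t)$ and the semigroup law by the standard diagonal argument applied to the defining sequences.

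Step 3 (attraction and connectedness). To see that $\mathcal{M}$ attracts every bounded set $C$, I would argue by contradiction: if some $C$ were not attracted, there would exist $\varepsilon > 0$ and sequences $t_n \to +\infty$, $x_n \in C$ with $\mathrm{dist}(\mathcal{U}(t_n)x_n, \mathcal{M}) \geq \varepsilon$; since $B_0$ absorbs $C$, these points eventually lie in $B_0$, so a subsequence converges to a point of $\omega(B_0) = \mathcal{M}$, a contradiction. Combined with closedness (a compact set is closed) and boundedness, this shows $\mathcal{M}$ is a global attractor in the sense of the preceding definition. For connectedness I would use that the ambient space is connected (a Banach space in the intended application), which allows $B_0$ to be chosen connected, e.g.\ a ball or the closed convex hull of the absorbing set; then each $\overline{\bigcup_{t \geq s} \mathcal{U}(t)B_0}$ is connected by continuity of the flow, and $\mathcal{M}$, being a nested intersection of compact connected sets, is connected.
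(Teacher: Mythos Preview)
The paper does not supply a proof of this statement: it is quoted verbatim as \cite[Theorem 4.1.2, page 63]{Hale} and used as a black box to deduce the existence of a global attractor for the particular semiflow $(\mathcal{U}(t))_{t\geq 0}$ once boundedness (Theorem~\ref{bounded}) and point dissipativity (Theorem~\ref{dissipative}) have been verified. There is therefore nothing in the paper to compare your argument against.

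That said, your outline is the standard $\omega$-limit-set construction from Hale's monograph, and you have put your finger on the right issue. As stated in the paper, the hypotheses ``$\mathcal{U}(t)$ bounded for each $t$'' and ``point dissipative'' on a general complete metric space are \emph{not} sufficient to produce a compact global attractor; Hale's actual theorem also assumes the semiflow is asymptotically smooth (equivalently, some form of eventual compactness). You correctly flag this as the main obstacle in Step~1, and you are right that without it the passage from pointwise attraction to absorption of bounded sets, and the compactness of $\omega(B_0)$ in Step~2, both fail. So your proposal is an honest sketch of Hale's argument, with the missing compactness hypothesis made explicit rather than swept under the rug; the paper's citation simply elides that hypothesis.
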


\subsection{Why do we need to look for an attractor for equation \eqref{Eq 1}?}
\noindent

The set of (constant and non-constant) equilibrium points of equation \eqref{Eq 1} is given by:
\begin{center}
	$\mathcal{S}=\left\{u^{*}=(u_1^*,\ldots,u_s^{*})\in D(\mathcal{A})\mid \text{ such that } \mathcal{A}u^*+F(t,u^*)=0 \text{ for } t\geq 0 \right\}$.
\end{center}
As we can see, these equilibrium points are not isolated. This lack of isolation means that we cannot apply techniques such as linearization or Lyapunov's methods, typically used to study the asymptotic behavior of systems near isolated equilibrium points. Furthermore, we do not anticipate observing convergence towards these equilibrium points, which complicates the understanding of the asymptotic behavior of our system as it approaches these equilibrium points. Because of these challenges, our primary focus is on establishing the existence of a finite fractal dimensional global attractor for equation \eqref{Eq 1}.
\subsection{ Existence of a global attractor for equation \eqref{Eq 1} }
\noindent

Let us define the family $(\mathcal{U}(t))_{t\geq 0}$ on $(\mathbb{L}^{2}(\Omega))^{s}_{+}$ as follows: for $t\geq 0$ and $u_0\in (\mathbb{L}^{2}(\Omega))^{s}_{+}$, we set $\mathcal{U}(t)u_0:=u(t,u_0):=u(t)$, where $u(\cdot,u_0)$ represents the unique solution  of equation \eqref{Eq 1} corresponding to $u_0\in (\mathbb{L}^{2}(\Omega))^{s}_{+}$. It is clear that $\mathcal{U}(0)u_0=u_0$, and for each $u_0\in (\mathbb{L}^{2}(\Omega))^{s}_{+}$, the map $t\to \mathcal{U}(t)u_0$ is continuous. Additionally, due to the Lipschitz property of the function $F$ with respect to the second argument, we can demonstrate that $\mathcal{U}(t+\tau)u_0=\mathcal{U}(t)\mathcal{U}(\tau)u_0$ for all $t,\tau\geq 0$ and $u_0\in (\mathbb{L}^{2}(\Omega))^{s}_{+}$. Consequently, we conclude that the family $(\mathcal{U}(t))_{t\geq 0}$ constitutes a semiflow on $(\mathbb{L}^{2}(\Omega))^{s}_{+}$. The following Theorem shows the boundnesse of the semiflow $(\mathcal{U}(t))_{t\geq 0}$ on $(\mathbb{L}^{2}(\Omega))^{s}_{+}$.

\begin{theorem} Assume that $\textbf{(H)}$ holds, then $\mathcal{U}(t)$ is bounded for each $t\geq 0$. Moreover, 
	\begin{center}
		$\Vert \mathcal{U}(t)u_0\Vert\leq e^{\bar{d}t}\left[\Vert u_0\Vert+1\right]$ \text{ for } $t\geq 0$,  \text{ and } $u_0\in (\mathbb{L}^{2}(\Omega))^{s}_{+}$.
	\end{center}
	\label{bounded}
\end{theorem}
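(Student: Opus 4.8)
The plan is to start from the mild-solution formula \eqref{Eq 3} and estimate $\Vert \mathcal{U}(t)u_0\Vert = \Vert u(t)\Vert$ directly, using the contraction property of the semigroup $(S(t))_{t\geq 0}$ together with the growth bound on $F$ from Lemma \ref{Lemma 5.4}. First I would write, for $t\geq 0$,
\begin{equation*}
\Vert u(t)\Vert \leq \Vert S(t)u_0\Vert + \int_0^t \Vert S(t-\tau)F(\tau,u(\tau))\Vert\, d\tau \leq \Vert u_0\Vert + \int_0^t \Vert F(\tau,u(\tau))\Vert\, d\tau,
\end{equation*}
where the last inequality uses that $(S(t))_{t\geq 0}$ is a contraction semigroup (Proposition \ref{positive}). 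Then Lemma \ref{Lemma 5.4} gives $\Vert F(\tau,u(\tau))\Vert \leq \bar{d}\,\Vert u(\tau)\Vert^{\beta}$ with $\beta\in\{0,1\}$, and the uniform bound $\Vert u(\tau)\Vert^{\beta} \leq 1 + \Vert u(\tau)\Vert$ (valid for both $\beta=0$ and $\beta=1$) lets me write $\Vert F(\tau,u(\tau))\Vert \leq \bar{d}\,(1+\Vert u(\tau)\Vert)$ uniformly in the two cases.

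Substituting this in yields
\begin{equation*}
\Vert u(t)\Vert \leq \Vert u_0\Vert + \bar{d}t + \bar{d}\int_0^t \Vert u(\tau)\Vert\, d\tau \leq \Vert u_0\Vert + \bar{d}t(1+\Vert u_0\Vert) + \bar{d}\int_0^t \Vert u(\tau)\Vert\, d\tau,
\end{equation*}
but it is cleaner to set $\varphi(t) := 1 + \Vert u(t)\Vert$ and observe that the inequality above gives $\varphi(t) \leq \varphi(0) + \bar{d}\int_0^t \varphi(\tau)\, d\tau$ after adding $1$ to both sides and absorbing the $\bar{d}t$ term: indeed $1 + \Vert u_0\Vert + \bar{d}t + \bar{d}\int_0^t\Vert u(\tau)\Vert d\tau \leq \varphi(0) + \bar{d}\int_0^t(1+\Vert u(\tau)\Vert)\,d\tau = \varphi(0) + \bar{d}\int_0^t\varphi(\tau)\,d\tau$, since $\bar{d}t \leq \bar{d}\int_0^t 1\, d\tau$. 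Now I apply the classical Gronwall inequality (or Lemma \ref{lem 2.2} with $w(t,\tau)\equiv \bar{d}$, $c = \varphi(0)$, $t_0=0$, noting $\partial w/\partial t = 0$) to conclude $\varphi(t) \leq \varphi(0)e^{\bar{d}t}$, i.e.
\begin{equation*}
\Vert \mathcal{U}(t)u_0\Vert \leq e^{\bar{d}t}\big[\Vert u_0\Vert + 1\big]\big) - 1 \leq e^{\bar{d}t}\big[\Vert u_0\Vert + 1\big].
\end{equation*}
From this explicit bound, boundedness of each map $\mathcal{U}(t)$ is immediate: if $\Vert u_0\Vert \leq \rho$ then $\Vert \mathcal{U}(t)u_0\Vert \leq e^{\bar{d}t}(\rho+1)$, so $\mathcal{U}(t)$ maps bounded sets to bounded sets.

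The only delicate point is making sure the continuous nonnegative function $\tau \mapsto \Vert u(\tau)\Vert$ really does satisfy the hypotheses needed to invoke Gronwall/Lemma \ref{lem 2.2} on all of $[0,+\infty)$ — this is fine because Theorem \ref{thm 3.6} already guarantees the solution is global, so $u(\cdot)$ is defined and continuous on $[0,+\infty)$ and the integral inequality holds for every $t\geq 0$. I do not expect any genuine obstacle here; the argument is essentially the global-existence estimate from the proof of Theorem \ref{thm 3.6} repackaged to produce the explicit exponential bound, with the two cases $\beta=0,1$ unified via $\Vert u\Vert^{\beta}\leq 1+\Vert u\Vert$.
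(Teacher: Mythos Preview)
Your proof is correct and follows essentially the same path as the paper: mild-solution formula, contraction of $(S(t))_{t\geq 0}$, the growth bound from Lemma \ref{Lemma 5.4}, and Gronwall. The only cosmetic difference is that the paper treats the cases $\beta=0$ and $\beta=1$ separately (obtaining $\Vert u_0\Vert + \bar{d}t$ and $e^{\bar{d}t}\Vert u_0\Vert$ respectively, then majorizing both by $e^{\bar{d}t}[\Vert u_0\Vert+1]$), whereas you unify them at the outset via $\Vert u\Vert^\beta \leq 1+\Vert u\Vert$ and apply Gronwall to $\varphi(t)=1+\Vert u(t)\Vert$.
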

\begin{proof} We employ Lemma \ref{Lemma 5.4}.
	Under assumption $\textbf{(H)}$, we have two cases: $\beta=0$ or $\beta=1$. We recall that 
	\begin{center}
		$\mathcal{U}(t)u_0=S(t)u_0+\displaystyle\int_{0}^{t}S(t-\tau)F(\tau,\mathcal{U}(\tau)u_0)d\tau$ \text{ for } $t\geq 0$.
	\end{center}
	\begin{enumerate}
		\item[1)] Case 1: $\beta=0$. In this case, for $t\geq 0$ and $u_0\in (\mathbb{L}^{2}(\Omega))^{s}_{+}$, by Lemma \ref{Lemma 5.4}, we obtain that
		\begin{eqnarray*}
			\Vert \mathcal{U}(t)u_0\Vert &\leq& \Vert u_0\Vert+\displaystyle\int_{0}^{t}\Vert F(\tau,\mathcal{U}(\tau)u_0)\Vert d\tau\\
			&\leq & \Vert u_0\Vert+\bar{d}t.
		\end{eqnarray*}
		\item[2)] Case 2: $\beta=1$. In this case, for $t\geq 0$ and $u_0\in (\mathbb{L}^{2}(\Omega))^{s}_{+}$, by Lemma \ref{Lemma 5.4}, we obtain that
		\begin{eqnarray*}
			\Vert \mathcal{U}(t)u_0\Vert &\leq& \Vert u_0\Vert+\displaystyle\int_{0}^{t}\bar{d}\Vert \mathcal{U}(\tau)u_0\Vert d\tau.
		\end{eqnarray*} 
		By Gronwall's lemma, we conclude that 
		\begin{eqnarray*}
			\Vert \mathcal{U}(t)u_0\Vert &\leq& e^{\bar{d}t}\Vert u_0\Vert.
		\end{eqnarray*}
	\end{enumerate}
	In both cases, we have
	\begin{center}
		$\Vert \mathcal{U}(t)u_0\Vert\leq e^{\bar{d}t}\left[\Vert u_0\Vert+1\right]$ \text{ for } $t\geq 0$,  \text{ and } $u_0\in (\mathbb{L}^{2}(\Omega))^{s}_{+}$.
	\end{center}
\end{proof}

The following result shows the point dissipativeness of $(\mathcal{U}(t))_{t\geq 0}$ on $(\mathbb{L}^{2}(\Omega))^{s}_{+}$.
\begin{theorem} Assume that $\textbf{(H)}$ holds,
	the semiflow  $\mathcal{U}(t)_{t\geq 0}$ is point dissipative on $(\mathbb{L}^{2}(\Omega))^{s}_{+}$.
	\label{dissipative}
\end{theorem}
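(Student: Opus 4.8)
The plan is to check the hypothesis of Theorem~\ref{Thm 4.11}: it is enough to produce a single constant $L>0$ with $\limsup_{t\to+\infty}\Vert\mathcal{U}(t)u_0\Vert\leq L$ for every $u_0\in(\mathbb{L}^{2}(\Omega))^{s}_{+}$. The bound in Theorem~\ref{bounded} does not suffice, since its right-hand side grows like $e^{\bar d t}$; the additional ingredient I would use is that, because of the Dirichlet condition on the bounded domain $\Omega$, the semigroup $(S(t))_{t\geq 0}$ generated by $\mathcal{A}$ is not merely contractive but uniformly exponentially stable.

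So the first step is to show that there is $\omega>0$ with $\Vert S(t)z\Vert\leq e^{-\omega t}\Vert z\Vert$ for all $t\geq 0$ and $z\in(\mathbb{L}^{2}(\Omega))^{s}$. For $z\in D(\mathcal{A})$, writing $w(t)=S(t)z$,
\begin{equation*}
\frac{1}{2}\frac{d}{dt}\Vert w(t)\Vert^{2}=\langle\mathcal{A}w(t),w(t)\rangle=\langle\mathcal{A}_{1}w(t),w(t)\rangle+\langle\mathcal{A}_{2}w(t),w(t)\rangle ,
\end{equation*}
and integration by parts together with $w_j|_{\partial\Omega}=0$ gives $\langle\mathcal{A}_{1}w,w\rangle=-\sum_{j=1}^{s}\sum_{i=1}^{m}k_i\Vert\partial_{x_i}w_j\Vert_{\mathbb{L}^{2}(\Omega)}^{2}$ and $\langle\mathcal{A}_{2}w,w\rangle=\tfrac{1}{2}\sum_{j=1}^{s}\sum_{i=1}^{m}u_i\int_{\partial\Omega}w_j^{2}n_i\,d\sigma=0$, exactly as in the proof of Proposition~\ref{positive}. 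By Poincaré's inequality on $H_0^{1}(\Omega)$ one then gets $\langle\mathcal{A}w,w\rangle\leq-\omega\Vert w\Vert^{2}$ with $\omega=(\min_{1\leq i\leq m}k_i)\,\lambda_1(\Omega)>0$, where $\lambda_1(\Omega)$ is the first eigenvalue of $-\Delta$ on $\Omega$ with Dirichlet condition; Gronwall's lemma applied to $t\mapsto\Vert w(t)\Vert^{2}$ and a density argument yield the claim.

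Next I would insert $\Vert S(t-\tau)\Vert\leq e^{-\omega(t-\tau)}$ into the mild formula $\mathcal{U}(t)u_0=S(t)u_0+\int_0^t S(t-\tau)F(\tau,\mathcal{U}(\tau)u_0)\,d\tau$ and use Lemma~\ref{Lemma 5.4}. If $\beta=0$, then $\Vert F\Vert\leq\bar d$ gives $\Vert\mathcal{U}(t)u_0\Vert\leq e^{-\omega t}\Vert u_0\Vert+\frac{\bar d}{\omega}(1-e^{-\omega t})$, hence $\limsup_{t\to+\infty}\Vert\mathcal{U}(t)u_0\Vert\leq\bar d/\omega$. If $\beta=1$, then $v(t):=e^{\omega t}\Vert\mathcal{U}(t)u_0\Vert$ satisfies $v(t)\leq\Vert u_0\Vert+\bar d\int_0^t v(\tau)\,d\tau$, so Gronwall's lemma (Lemma~\ref{lem 2.2} with $w(t,\tau)\equiv\bar d$) gives $\Vert\mathcal{U}(t)u_0\Vert\leq\Vert u_0\Vert e^{(\bar d-\omega)t}$, which tends to $0$ whenever $\omega>\bar d$. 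In both situations $L:=\max(\bar d/\omega,1)$ is a uniform asymptotic bound, and Theorem~\ref{Thm 4.11} yields point dissipativeness of $(\mathcal{U}(t))_{t\geq 0}$ on $(\mathbb{L}^{2}(\Omega))^{s}_{+}$.

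I expect the case $\beta=1$ to be the delicate point: the crude estimate $\Vert F(t,z)\Vert\leq\bar d\Vert z\Vert$ dominates the diffusive decay only under the smallness condition $\bar d<\omega=(\min_i k_i)\lambda_1(\Omega)$, i.e. when the reaction rates $d_\kappa$ are small compared with the diffusion coefficients and $\Omega$ is not too large; removing this would require a sharper bound on the linear map $z\mapsto F(t,z)$ reflecting the mass-balance structure of the coefficients $r_{j\kappa},l_{j\kappa}$, or an equivalent norm adapted to it. A more routine, but necessary, point is to justify the integration-by-parts identities above by approximating elements of $(H_0^{1}(\Omega)\cap H^{2}(\Omega))^{s}$ by smooth functions.
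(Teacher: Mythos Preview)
Your route is genuinely different from the paper's. You exploit an ingredient the paper never invokes: the exponential decay $\Vert S(t)\Vert\le e^{-\omega t}$ coming from Poincar\'e's inequality on $H_0^1(\Omega)$. The paper uses only the contractivity $\Vert S(t)\Vert\le 1$ and instead rewrites the equation in the shifted form~\eqref{equation 5}, so that the Duhamel formula carries the factor $e^{-\lambda(t-\tau)}$; it then splits the integral at a time $t_0=t_0(u_0)$, controls the piece on $[0,t_0]$ via Theorem~\ref{bounded}, and applies the generalized Gronwall inequality of Lemma~\ref{lem 2.2} with the time-dependent kernel $w(t,\tau)=(\bar d+\lambda)e^{-\lambda(t-\tau)}$, whose special structure makes the exponent $\int_{t_0}^{t}\bigl[w(\tau,\tau)+\int_{t_0}^{\tau}\partial_\tau w(\tau,r)\,dr\bigr]d\tau$ uniformly bounded in $t$. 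For $\beta=0$ both arguments succeed; yours is shorter and gives the explicit absorbing radius $\bar d/\omega$, while the paper's bound $(1+\bar d)e^{1+\bar d}$ is coarser but uses less about $\mathcal A$.

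Your hesitation in the case $\beta=1$ is well placed, and it is worth saying plainly that the smallness condition $\bar d<\omega$ is not an artifact of your method. The paper's argument in that case hinges on choosing $t_0$ so that condition~\eqref{eq 10} holds for all $t>t_0$, but at $t=t_0$ that inequality reads $e^{\bar d t_0}(\Vert u_0\Vert+1)\le 1$, which is never satisfied; consequently the hypothesis of Lemma~\ref{lem 2.2} (an inequality valid on the whole interval from the lower limit of the integral) is not met with $c=1$. More decisively, the $\lambda$-shift uses nothing about $\mathcal A$ beyond $\Vert S(t)\Vert\le 1$, so if it worked it would equally prove point dissipativeness for the linear ODE $u'=\bar d\,u$ in $(\mathbb L^2(\Omega))^s$, which is false. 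Concretely, with $s=m=1$, $\Omega=(0,\pi)$, $u_1=0$, and $F(t,z)=\bar d\,z$ (which satisfies \textbf{(H)}), the solution from $u_0=\sin x$ is $e^{(\bar d-k_1)t}\sin x$ and blows up once $\bar d>k_1=\omega$. So your restriction $\bar d<\omega$ is exactly what is needed, and the ``sharper bound reflecting the mass-balance structure'' you mention would have to impose additional constraints on the coefficients $r_{j\kappa}$ to go further.
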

\begin{proof}
	As in the proof of Theorem \ref{bounded}, assumption $\textbf{(H)}$ imply that $\beta=0$ or $\beta=1$. Let $u_0\in (\mathbb{L}^{2}(\Omega))^{s}_{+}$ be fixed, $\lambda>1$ be sufficiently large, and $t_0:=t_0(u_0)>0$ such that
	\begin{equation}
	\dfrac{e^{(\bar{d}+\lambda)t_0} \left[\Vert u_0\Vert+1\right]}{e^{\lambda t}}\leq 1 \hspace{0.1cm} \text{ for } \hspace{0.1cm} t> t_0.
	\label{eq 10}
	\end{equation}
	We recall that equation \eqref{Eq 1} can take the equivalent form \eqref{equation 5}, and therefore:
	
	\begin{center}
		$\mathcal{U}(t)u_0=e^{-\lambda t}S(t)u_0\!+\!\displaystyle\int_{0}^{t}e^{-\lambda(t-\tau)}S(t-\tau)\left[F(\tau,\mathcal{U}(\tau)u_0)\!+\!\lambda \mathcal{U}(\tau)u_0\right]d\tau$,\hspace{0.2cm} $t\geq 0$.
	\end{center}
	\item i) If $\beta=0$, for $t\geq t_0$, using Lemma \ref{Lemma 5.4}, we have
	\begin{eqnarray*}
		\Vert \mathcal{U}(t)u_0\Vert & \leq & e^{-\lambda t}\Vert u_0\Vert+\displaystyle\int_{0}^{t}e^{-\lambda(t-\tau)}\left[\bar{d}+\lambda\Vert \mathcal{U}(\tau)u_0\Vert\right]d\tau\\
		&=& e^{-\lambda t}\Vert u_0\Vert+\bar{d}\displaystyle\int_{0}^{t}e^{-\lambda(t-\tau)}d\tau+\displaystyle\int_{0}^{t}\lambda e^{-\lambda(t-\tau)} \Vert \mathcal{U}(\tau)u_0\Vert d\tau\\
		&=& e^{-\lambda t}\Vert u_0\Vert+\frac{\bar{d}}{\lambda}\left[1-e^{-\lambda t}\right]+\displaystyle\int_{0}^{t}\lambda e^{-\lambda(t-\tau)} \Vert \mathcal{U}(\tau)u_0\Vert d\tau.
	\end{eqnarray*}
	By Theorem \ref{bounded}, we obtain that 
	\begin{eqnarray*}
		\Vert \mathcal{U}(t)u_0\Vert & \leq & J_{1,\lambda}(t_0,t)+\displaystyle\int_{t_0}^{t}\lambda e^{-\lambda(t-\tau)} \Vert \mathcal{U}(\tau)u_0\Vert d\tau,
	\end{eqnarray*}
	where
	\begin{eqnarray*}
		J_{1,\lambda}(t_0,t) &=& e^{-\lambda t}\Vert u_0\Vert+\frac{\bar{d}}{\lambda}\left[1-e^{-\lambda t}\right]+\lambda e^{-\lambda t}\displaystyle\int_{0}^{t_0} e^{(\bar{d}+\lambda)\tau}\left[\Vert u_0\Vert+1\right] d\tau\\
		&\leq & e^{-\lambda t}\Vert u_0\Vert+\frac{\bar{d}}{\lambda}\left[1-e^{-\lambda t}\right]+\frac{\lambda}{\bar{d}+\lambda}e^{-\lambda t}\left[\Vert u_0\Vert+1\right](e^{(\bar{d}+\lambda)t_0}-1)\\
		&\leq & \dfrac{e^{(\bar{d}+\lambda)t_0} \left[\Vert u_0\Vert+1\right]}{e^{\lambda t}}+ \frac{\bar{d}}{\lambda}.
	\end{eqnarray*}
	By \eqref{eq 10}, we get that 
	\begin{eqnarray*}
		\Vert \mathcal{U}(t)u_0\Vert & \leq & \left(1+\frac{\bar{d}}{\lambda}\right)+\displaystyle\int_{t_0}^{t}w(t,\tau) \Vert \mathcal{U}(\tau)u_0\Vert d\tau,
	\end{eqnarray*}
	where 
	\begin{center}
		$w(t,\tau)=\lambda e^{-\lambda(t-\tau)}$ \text{ for } $t\geq \tau>t_0$.
	\end{center}
	Using Lemma \ref{lem 2.2}, we obtain that
	\begin{eqnarray*}
		\Vert \mathcal{U}(t)u_0\Vert & \leq & \left(1+\frac{\bar{d}}{\lambda}\right)\exp\left(\displaystyle\int_{t_0}^{t}\left[\lambda-\lambda\displaystyle\int_{t_0}^{\tau}\lambda e^{-\lambda(\tau-r)}dr\right]d\tau\right)\\
		&=& \left(1+\frac{\bar{d}}{\lambda}\right)\exp\left(\displaystyle\int_{t_0}^{t}\left[\lambda-\lambda(1-e^{-\lambda(\tau-t_0)})\right]d\tau\right)\\
		&=& \left(1+\frac{\bar{d}}{\lambda}\right)\exp\left(\displaystyle\int_{t_0}^{t}\lambda e^{-\lambda(\tau-t_0)}d\tau\right)\\
		&=& \left(1+\frac{\bar{d}}{\lambda}\right)\exp\left(1-e^{-\lambda(t-t_0)}\right).
	\end{eqnarray*}
	\item ii) If $\beta=1$, for $t\geq t_0$, using Lemma \ref{Lemma 5.4}, we obtain that
	\begin{eqnarray*}
		\Vert \mathcal{U}(t)u_0\Vert & \leq & e^{-\lambda t}\Vert u_0\Vert+\displaystyle\int_{0}^{t}(\bar{d}+\lambda)e^{-\lambda(t-\tau)}\Vert \mathcal{U}(\tau)u_0\Vert d\tau.
	\end{eqnarray*}
	By Theorem \ref{bounded}, we get that 
	\begin{eqnarray*}
		\Vert \mathcal{U}(t)u_0\Vert & \leq & J_{2,\lambda}(t_0,t) + \displaystyle\int_{t_0}^{t}(\bar{d}+\lambda)e^{-\lambda(t-\tau)}\Vert \mathcal{U}(\tau)u_0\Vert d\tau,
	\end{eqnarray*}
	where 
	\begin{eqnarray*}
		J_{2,\lambda}(t_0,t) &=& e^{-\lambda t}\Vert u_0\Vert+e^{-\lambda t}\displaystyle\int_{0}^{t_0}(\bar{d}+\lambda)e^{(\lambda+\bar{d})\tau}\left[\Vert u_0\Vert+1\right] d\tau\\
		& = & e^{-\lambda t}\Vert u_0\Vert+e^{-\lambda t}\left[\Vert u_0\Vert+1\right]\left(e^{(\lambda+\bar{d})t_0}-1\right)\\
		&\leq & \dfrac{e^{(\bar{d}+\lambda)t_0} \left[\Vert u_0\Vert+1\right]}{e^{\lambda t}}.
	\end{eqnarray*}
	By \eqref{eq 10}, we have 
	\begin{eqnarray*}
		\Vert \mathcal{U}(t)u_0\Vert &\leq & 1 + \displaystyle\int_{0}^{t}w(t,\tau)\Vert \mathcal{U}(\tau)u_0\Vert d\tau,
	\end{eqnarray*}
	where 
	\begin{center}
		$ w(t,\tau)=(\bar{d}+\lambda)e^{-\lambda(t-\tau)}$ \text{ for } $t\geq \tau>t_0$.
	\end{center}
	Using Lemma \ref{lem 2.2}, we obtain that 
	\begin{eqnarray*}
		\Vert \mathcal{U}(t)u_0\Vert &\leq & \exp\left(\displaystyle\int_{t_0}^{t}\left[(\bar{d}+\lambda)-(\bar{d}+\lambda)\displaystyle\int_{t_0}^{\tau}\lambda e^{-\lambda(\tau-r)}dr\right]d\tau\right)\\
		&= & \exp\left(\displaystyle\int_{t_0}^{t}\left[(\bar{d}+\lambda)-(\bar{d}+\lambda)(1-e^{-\lambda (\tau-t_0)})\right]d\tau\right)\\
		&= & \exp\left(\dfrac{(\bar{d}+\lambda)}{\lambda}(1-e^{-\lambda (t-t_0)})\right).
	\end{eqnarray*}
	In both cases, we obtain that 
	\begin{center}
		$\Vert \mathcal{U}(t)u_0\Vert \leq (1+\bar{d})e^{(1+\bar{d})} $ \text{ for } $t>t_0$.
	\end{center}
	As a consequence, 
	\begin{center}
		$\limsup\limits_{t\to +\infty}\Vert \mathcal{U}(t)u_0\Vert \leq (1+\bar{d})e^{(1+\bar{d})}$.
	\end{center}
	By Theorem \ref{Thm 4.11}, we conclude that $(\mathcal{U}(t))_{t\geq 0}$ is point dissipative on $(\mathbb{L}^{2}(\Omega))^{s}_{+}$.
\end{proof}

The main result in this section is the following, which directly follows from Theorem \ref{Theorem 4.1.2}, Theorem \ref{bounded}, and Theorem \ref{dissipative}.
\begin{theorem} Assume that $\textbf{(H)}$ holds, then
	the semiflow $\mathcal{U}(t)_{t\geq 0}$ has a unique connected global attractor $\mathcal{M}$ on $(\mathbb{L}^{2}(\Omega))^{s}_{+}$.
\end{theorem}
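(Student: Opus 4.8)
The plan is to assemble the three ingredients already established and feed them into Hale's abstract attractor theorem (Theorem~\ref{Theorem 4.1.2}). First I would check that the phase space is admissible: $(\mathbb{L}^{2}(\Omega))^{s}_{+}$ is a closed convex cone in the Hilbert space $(\mathbb{L}^{2}(\Omega))^{s}$, hence a closed subset of a complete space, hence itself a complete metric space for the induced distance $d(u,v)=\Vert u-v\Vert$. By the positivity theorem of the previous section, every solution issued from $u_0\in(\mathbb{L}^{2}(\Omega))^{s}_{+}$ remains in $(\mathbb{L}^{2}(\Omega))^{s}_{+}$, so $(\mathcal{U}(t))_{t\geq 0}$ is well-defined on this space; as already observed just before Theorem~\ref{bounded}, it is a semiflow, the semigroup identity $\mathcal{U}(t+\tau)=\mathcal{U}(t)\mathcal{U}(\tau)$ coming from uniqueness of mild solutions together with the Lipschitz dependence of $F$ on its second argument, and the continuity of $t\mapsto\mathcal{U}(t)u_0$ from the continuity of mild solutions.

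Next I would invoke the two structural results of this section. Theorem~\ref{bounded} states that $\mathcal{U}(t)$ sends bounded sets to bounded sets for each fixed $t\geq 0$, with the explicit estimate $\Vert\mathcal{U}(t)u_0\Vert\leq e^{\bar{d}t}[\Vert u_0\Vert+1]$; thus $\mathcal{U}(t)$ is bounded for every $t\geq 0$. Theorem~\ref{dissipative} states that the semiflow is point dissipative, since $\limsup_{t\to+\infty}\Vert\mathcal{U}(t)u_0\Vert\leq(1+\bar{d})e^{1+\bar{d}}$ for every $u_0$, so the ball of radius $(1+\bar{d})e^{1+\bar{d}}$ in $(\mathbb{L}^{2}(\Omega))^{s}_{+}$ attracts each point of the phase space. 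These are precisely the hypotheses of Theorem~\ref{Theorem 4.1.2}, which therefore yields the existence of a connected global attractor $\mathcal{M}$ for $(\mathcal{U}(t))_{t\geq 0}$ on $(\mathbb{L}^{2}(\Omega))^{s}_{+}$.

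For uniqueness I would run the standard argument: if $\mathcal{M}_1$ and $\mathcal{M}_2$ are both global attractors, then each is bounded and invariant, so $\mathcal{M}_1=\mathcal{U}(t)\mathcal{M}_1$ is attracted by $\mathcal{M}_2$ as $t\to+\infty$, which (using that $\mathcal{M}_2$ is closed) forces $\mathcal{M}_1\subseteq\mathcal{M}_2$; by symmetry $\mathcal{M}_2\subseteq\mathcal{M}_1$, hence $\mathcal{M}_1=\mathcal{M}_2$. This gives the asserted uniqueness, and connectedness is part of the conclusion of Theorem~\ref{Theorem 4.1.2}.

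There is essentially no remaining obstacle in this final statement: all of the genuine analytical work was carried out in Theorem~\ref{bounded} and Theorem~\ref{dissipative}. The one point deserving a word of care is that the complete metric space to which Hale's theorem is applied is the positive cone $(\mathbb{L}^{2}(\Omega))^{s}_{+}$ rather than the whole space $(\mathbb{L}^{2}(\Omega))^{s}$; this is legitimate precisely because the positivity theorem guarantees that the cone is invariant under the dynamics, so $(\mathcal{U}(t))_{t\geq 0}$ restricts to a genuine semiflow there. Everything else is a direct citation of the quoted abstract results.
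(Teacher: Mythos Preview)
Your proposal is correct and follows exactly the approach of the paper, which simply notes that the result ``directly follows from Theorem~\ref{Theorem 4.1.2}, Theorem~\ref{bounded}, and Theorem~\ref{dissipative}.'' You in fact supply more detail than the paper does---verifying that the positive cone is a complete metric space, recalling why $(\mathcal{U}(t))_{t\geq 0}$ is a semiflow there, and spelling out the standard uniqueness argument---none of which the authors make explicit.
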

\subsection{ Finite fractal dimensional }
\noindent

The main focus of the previous subsection was centered on the presence of the global attractor $\mathcal{M}$ for the semiflow $(\mathcal{U}(t))_{t\geq 0}$ in the space $(\mathbb{L}^{2}(\Omega))^{s}$. In the realm of infinite-dimensional dynamical systems, determining the fractal dimension of the global attractor is of great importance. This significance arises from Mañé's Theorem, which states that if the global attractor has a finite fractal dimension, it becomes possible to reduce the infinite-dimensional dynamical system to a finite-dimensional counterpart existing within the attractor.
Our goal in this section is to establish the finite fractal dimension property of the global attractor $\mathcal{M}$ of the semiflow $(\mathcal{U}(t))_{t\geq 0}$.

From \cite{Efendiev}, we recall the following definition.
\begin{definition} The fractal dimension of $\mathcal{M}$ is defined by
	\begin{center}
		$\dim_{f}(\mathcal{M})=\limsup\limits_{\varepsilon\to 0}ln(N_{\varepsilon})/ln(1/\varepsilon)$,
	\end{center} 
where $N_{\varepsilon}$ is the minor $n\in\mathbb{N}$ such that there exists open balls $B_1,\ldots,B_n$ with radius $\varepsilon$ such that $\mathcal{M}\subset \bigcup\limits_{i=1}^{n}B_i$.
\end{definition}

To show that $\mathcal{M}$ has a finite fractal dimension, we employ the following Theorem.
\begin{theorem} \cite[Theorem 2.1]{Czaja} Let $X$, $Y$, and $Z$ be Banach spaces such that $Y$ is compactly embedded in $Z$ and let $\mathcal{B}$ be a bounded set of $X$, invariant under a map $\mathcal{R}$ such that $\mathcal{R}(\mathcal{B})=\mathcal{B}$. Assume that there exist a map $\mathcal{T}:\mathcal{B}\to Y$, $\delta<\frac{1}{2}$ ($\delta\geq 0$), $\mu>0$, and $k>0$ such that
	\begin{equation*}
	\Vert \mathcal{R}(u)-\mathcal{R}(v)\Vert_{X}\leq \delta \Vert u-v\Vert_{X}+\mu\Vert \mathcal{T}(u)-\mathcal{T}(v)\Vert_{Z},
	\end{equation*} 
	and
	\begin{equation*}
	\Vert \mathcal{T}(u)-\mathcal{T}(v)\Vert_{Y}\leq k\Vert u-v\Vert_{X},
	\end{equation*}
	for all $u,v\in \mathcal{B}$. Under the above assumptions, $\mathcal{B}$ is a precompact set in $X$ and it has a finite fractal dimension, more precisely, for any $\nu\in (0,\frac{1}{2}-\delta)$, 
	\begin{equation*}
	\dim_{f}(\mathcal{B})\leq \frac{\ln\left(N_{\frac{\nu}{k\mu}}^{Z}\left(B_Y(0,1)\right)\right)}{\ln\left(\frac{1}{2(\delta+\nu)}\right)},
	\end{equation*}
	where $N_{\frac{\nu}{k\mu}}^{Z}\left(B_Y(0,1)\right)$ is the minimal number of $\frac{\nu}{k\mu}$-balls in $Z$ need to cover $B_Y(0,1)$ the open unit ball in $Y$.
	\label{finit fractal}
\end{theorem}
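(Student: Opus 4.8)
The plan is to use the classical smoothing (squeezing) plus covering-number argument. First I would establish that $\mathcal{B}$ is precompact in $X$. Since $\mathcal{B}$ is bounded in $X$ and $\mathcal{T}$ is Lipschitz from $\mathcal{B}$ into $Y$, the set $\mathcal{T}(\mathcal{B})$ is bounded in $Y$, hence precompact in $Z$ because $Y$ is compactly embedded in $Z$. Fixing $\varepsilon>0$, covering $\mathcal{T}(\mathcal{B})$ by finitely many $Z$-balls of radius $\varepsilon$, pulling these back through $\mathcal{T}$, and combining with $\Vert\mathcal{R}(u)-\mathcal{R}(v)\Vert_X\leq\delta\Vert u-v\Vert_X+\mu\Vert\mathcal{T}(u)-\mathcal{T}(v)\Vert_Z$ and the invariance $\mathcal{R}(\mathcal{B})=\mathcal{B}$, one covers $\mathcal{B}$ by finitely many sets of $X$-diameter at most $\delta\,\mathrm{diam}_X(\mathcal{B})+2\mu\varepsilon$. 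Since $\delta<\tfrac12<1$, iterating this with $\varepsilon$ decreasing at each stage drives the mesh of the covering to $0$; hence $\mathcal{B}$ is totally bounded, and since $X$ is a Banach space, $\mathcal{B}$ is precompact.

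Next I would make the estimate quantitative. Fix $\nu\in(0,\tfrac12-\delta)$ and suppose $\mathcal{B}$ is covered by $N$ balls $B_X(c_1,\rho),\dots,B_X(c_N,\rho)$. On each piece $\mathcal{B}_j:=\mathcal{B}\cap B_X(c_j,\rho)$ the second inequality gives $\mathrm{diam}_Y\mathcal{T}(\mathcal{B}_j)\leq 2k\rho$, so $\mathcal{T}(\mathcal{B}_j)$ lies in a translate of $2k\rho\,B_Y(0,1)$; rescaling a minimal $Z$-ball cover of $B_Y(0,1)$ shows that $\mathcal{T}(\mathcal{B}_j)$ can be covered by $N^Z_{\nu/(k\mu)}(B_Y(0,1))$ $Z$-balls whose common radius is a fixed multiple of $\rho$. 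On each sub-piece $\mathcal{B}_{j,i}$, formed by the points of $\mathcal{B}_j$ whose $\mathcal{T}$-image lies in the $i$-th such $Z$-ball, the smoothing inequality then yields $\Vert\mathcal{R}(u)-\mathcal{R}(v)\Vert_X\leq 2(\delta+\nu)\rho$ for all $u,v\in\mathcal{B}_{j,i}$, so $\mathcal{R}(\mathcal{B}_{j,i})$ lies in an $X$-ball of radius $2(\delta+\nu)\rho$. Since $\mathcal{R}(\mathcal{B})=\mathcal{B}=\bigcup_{j,i}\mathcal{R}(\mathcal{B}_{j,i})$, this produces a new cover of $\mathcal{B}$, again by balls, of radius $2(\delta+\nu)\rho$ and cardinality at most $N\cdot P$, where $P:=N^Z_{\nu/(k\mu)}(B_Y(0,1))$ is independent of $\rho$.

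Finally I would iterate. Starting from any cover of $\mathcal{B}$ by $N_0$ balls of radius $\rho_0$, the previous step yields, for every $n$, a cover of $\mathcal{B}$ by $N_0\,P^n$ balls of radius $\rho_n=(2(\delta+\nu))^n\rho_0$, and $\rho_n\to0$ because $2(\delta+\nu)<1$. Substituting $\varepsilon=\rho_n$ into $\dim_f(\mathcal{B})=\limsup_{\varepsilon\to0}\ln N_\varepsilon/\ln(1/\varepsilon)$ gives
$$\dim_f(\mathcal{B})\leq\lim_{n\to\infty}\frac{\ln N_0+n\ln P}{n\ln\frac{1}{2(\delta+\nu)}-\ln\rho_0}=\frac{\ln P}{\ln\frac{1}{2(\delta+\nu)}},$$
which is the asserted bound.

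I expect the quantitative step to be the main obstacle: one has to track the length scale carefully through the Lipschitz map $\mathcal{T}$ into $Y$, rescale the covering number of the unit ball $B_Y(0,1)$, and reapply the smoothing inequality so that (i) the resulting pieces are genuine balls — so the iteration can restart with the same constant $P$ — and (ii) the constants combine to give exactly the radius ratio $2(\delta+\nu)$ and the argument $\nu/(k\mu)$ inside $N^Z$. Choosing the $Z$-ball radius proportional to the current $X$-scale $\rho$ is precisely what keeps $P$ constant along the iteration; the compact embedding $Y\hookrightarrow Z$ is needed only once, to seed the precompactness argument, and the hypothesis $\delta<\tfrac12$ is exactly what makes $2(\delta+\nu)<1$ for suitable $\nu$, so that the covering radius actually contracts.
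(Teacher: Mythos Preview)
The paper does not prove this theorem: it is quoted verbatim from \cite[Theorem~2.1]{Czaja} and used as a black box in the proof of the subsequent finite-fractal-dimension result. There is therefore no ``paper's own proof'' to compare against.

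That said, your sketch is the standard smoothing/covering iteration that underlies the Czaja--Efendiev result, and the overall strategy is correct: use the Lipschitz bound on $\mathcal{T}$ to confine $\mathcal{T}(\mathcal{B}_j)$ to a small $Y$-ball, transfer a rescaled $Z$-cover of $B_Y(0,1)$, feed the resulting $Z$-smallness back through the squeezing inequality, and iterate using the invariance $\mathcal{R}(\mathcal{B})=\mathcal{B}$. One point to tighten in the quantitative step: to land exactly on the radius ratio $2(\delta+\nu)$ and the covering argument $\nu/(k\mu)$, you should take the centers $c_j$ in $\mathcal{B}$ so that $\mathcal{T}(\mathcal{B}_j)\subset B_Y(\mathcal{T}(c_j),k\rho)$ (radius $k\rho$, not $2k\rho$); then a $\tfrac{\nu}{k\mu}$-cover of $B_Y(0,1)$ rescales to a cover of $\mathcal{T}(\mathcal{B}_j)$ by $Z$-balls of radius $\tfrac{\nu\rho}{\mu}$, and the squeezing inequality gives $\Vert\mathcal{R}(u)-\mathcal{R}(v)\Vert_X\le 2\delta\rho+2\nu\rho=2(\delta+\nu)\rho$ on each sub-piece, matching the stated bound exactly. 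With that bookkeeping fixed, your iteration and the $\limsup$ computation are fine.
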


The following Theorem is the main result in this subsection.  It shows that the attractor $\mathcal{M}$ has a finite fractal dimension.
\begin{theorem} Assume that $\textbf{(H)}$ holds, then the global attractor $\mathcal{M}$ of the semiflow $(\mathcal{U}(t))_{t\geq 0}$ has a finite fractal dimension.
\end{theorem}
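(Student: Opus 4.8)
The plan is to apply Theorem~\ref{finit fractal} with $X=Z=(\mathbb{L}^{2}(\Omega))^{s}$, $\mathcal{B}=\mathcal{M}$, and $\mathcal{R}=\mathcal{U}(t^{*})$ for a conveniently chosen large time $t^{*}>0$; since the global attractor is invariant, $\mathcal{R}(\mathcal{M})=\mathcal{M}$, and $\mathcal{M}$ is bounded in $X$. For the regularizing space I would take $Y:=D((-\mathcal{A})^{1/2})$ with its graph norm, and for the auxiliary map
\[
\mathcal{T}(u_{0}):=\mathcal{U}(t^{*})u_{0}-S(t^{*})u_{0}=\int_{0}^{t^{*}}S(t^{*}-\tau)\,F(\tau,\mathcal{U}(\tau)u_{0})\,d\tau,\qquad u_{0}\in\mathcal{M}.
\]
Two ingredients are needed: a uniform exponential decay of $(S(t))_{t\ge0}$, which controls the ``contraction constant'' $\delta$, and the parabolic smoothing of $(S(t))_{t\ge0}$, which places $\mathcal{T}(\mathcal{M})$ inside $Y$ with a Lipschitz bound measured in $X$.

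First I would establish the decay. Splitting $\mathcal{A}=\mathcal{A}_{1}+\mathcal{A}_{2}$ as in the proof of Proposition~\ref{positive} (diffusion part plus first-order advection part), integration by parts with the Dirichlet condition gives $\langle\mathcal{A}_{2}z,z\rangle=0$, hence for $z\in D(\mathcal{A})$
\[
\langle\mathcal{A}z,z\rangle=-\sum_{i=1}^{m}k_{i}\|\partial_{x_{i}}z\|^{2}\le-k_{\min}\lambda_{1}\|z\|^{2},
\]
where $k_{\min}=\min_{i}k_{i}$ and $\lambda_{1}>0$ is the first Dirichlet eigenvalue of $-\Delta$ on $\Omega$ (Poincaré inequality). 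Thus $\|S(t)\|_{\mathcal{L}(X)}\le e^{-k_{\min}\lambda_{1}t}$, and I fix $t^{*}>\ln 2/(k_{\min}\lambda_{1})$ so that $\delta:=\|S(t^{*})\|_{\mathcal{L}(X)}<\tfrac12$. Since $\mathcal{A}_{1}$ is strongly elliptic and $\mathcal{A}_{2}$ is $\mathcal{A}_{1}$-bounded with $\mathcal{A}_{1}$-bound $0$ (Lemma~\ref{Proposition A.47}), $\mathcal{A}$ generates an analytic semigroup, has spectral bound $<0$, and has compact resolvent (because $D(\mathcal{A})\subset(H^{2}(\Omega))^{s}$ embeds compactly into $X$); consequently $(-\mathcal{A})^{1/2}$ is well defined, $Y=D((-\mathcal{A})^{1/2})$ is compactly embedded in $X$, and $\|(-\mathcal{A})^{1/2}S(t)\|_{\mathcal{L}(X)}\le C\,t^{-1/2}$ for $t\in(0,t^{*}]$ (standard analytic-semigroup estimates, cf.\ \cite{Pazy}).

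It then remains to check the two hypotheses of Theorem~\ref{finit fractal}. For well-definedness: $\mathcal{M}$ is bounded and invariant, so $\sup\{\|\mathcal{U}(\tau)u_{0}\|:\tau\in[0,t^{*}],\,u_{0}\in\mathcal{M}\}<\infty$ by Theorem~\ref{bounded}, hence $\|F(\tau,\mathcal{U}(\tau)u_{0})\|\le C_{0}$ by Lemma~\ref{Lemma 5.4}, and $\|\mathcal{T}(u_{0})\|_{Y}\le C\,C_{0}\int_{0}^{t^{*}}(t^{*}-\tau)^{-1/2}d\tau<\infty$. The first inequality of Theorem~\ref{finit fractal} is then immediate from the triangle inequality, $\|\mathcal{R}(u_{0})-\mathcal{R}(v_{0})\|_{X}\le\delta\|u_{0}-v_{0}\|_{X}+\|\mathcal{T}(u_{0})-\mathcal{T}(v_{0})\|_{Z}$ with $Z=X$ and $\mu=1$. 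For the second inequality, under \textbf{(H)} the map $F(t,\cdot)$ is linear with Lipschitz constant $\le\bar d$ (Lemma~\ref{Lemma 5.4}), so Gronwall's lemma applied to the variation-of-constants formula yields $\|\mathcal{U}(\tau)u_{0}-\mathcal{U}(\tau)v_{0}\|\le e^{\bar d\tau}\|u_{0}-v_{0}\|$, whence
\[
\|\mathcal{T}(u_{0})-\mathcal{T}(v_{0})\|_{Y}\le\Big(C\bar d\int_{0}^{t^{*}}(t^{*}-\tau)^{-1/2}e^{\bar d\tau}\,d\tau\Big)\|u_{0}-v_{0}\|_{X}=:k\,\|u_{0}-v_{0}\|_{X},\qquad k<\infty.
\]
Since $\delta<\tfrac12$, $\mu>0$, $k>0$, and $Y$ is compactly embedded in $Z$ (so every covering number $N^{Z}_{\nu/(k\mu)}(B_{Y}(0,1))$ is finite), Theorem~\ref{finit fractal} gives that $\mathcal{M}$ has finite fractal dimension, together with the explicit bound stated there. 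I expect the main obstacle to be precisely the step that leaves the $C_{0}$-contraction-semigroup setting used so far: one has to confirm that $\mathcal{A}$, despite the non-self-adjoint advection term, is sectorial with compact resolvent and obeys $\|(-\mathcal{A})^{1/2}S(t)\|\le Ct^{-1/2}$, and one must keep the regularity exponent strictly below $1$ so that $(t^{*}-\tau)^{-1/2}$ stays integrable (which rules out the naive choice $Y=D(\mathcal{A})$). The remaining steps only recombine Theorem~\ref{bounded}, Lemma~\ref{Lemma 5.4}, Gronwall's lemma, and the triangle inequality. (When $\beta=0$ the map $\mathcal{T}$ is constant, the second inequality is trivial, and $\mathcal{M}$ reduces to a single point.)
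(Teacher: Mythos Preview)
Your argument is correct and is actually the standard ``parabolic smoothing'' route to finite fractal dimension, but it is \emph{not} the route the paper takes. The paper applies Theorem~\ref{finit fractal} with the trivial choice $\mathcal{T}=0$: it rewrites the flow via the shifted equation~\eqref{equation 5}, uses the Lipschitz bound $\Vert F_\lambda(s,\cdot)-F_\lambda(s,\cdot)\Vert\le(\lambda+C)\Vert\cdot\Vert$ on the bounded invariant set $\mathcal{M}$, and then invokes the Gronwall-type Lemma~\ref{lem 2.2} to obtain directly $\Vert\mathcal{U}(t^{*})u_{0}-\mathcal{U}(t^{*})v_{0}\Vert\le\delta\sqrt{e}\,\Vert u_{0}-v_{0}\Vert$ with $\delta\sqrt{e}<\tfrac12$, i.e.\ the \emph{full} nonlinear map is a strict contraction on $\mathcal{M}$. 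No analyticity, no fractional powers, no smoothing estimate, and the regularizing space $Y=(H^{1}(\Omega))^{s}$ plays no role beyond being compactly embedded in $Z$.

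The trade-off is this: the paper's argument stays entirely within the contraction-semigroup framework already built in Section~\ref{sec 3}, but it exploits the linearity of $F$ under \textbf{(H)} in an essential way (and in fact, since $\mathcal{U}(t^{*})$ is a strict contraction on the invariant set $\mathcal{M}$, it implicitly yields $\dim_{f}(\mathcal{M})=0$). Your approach requires the extra machinery of sectoriality and the estimate $\Vert(-\mathcal{A})^{1/2}S(t)\Vert\le Ct^{-1/2}$, which you correctly justify via the $\mathcal{A}_{1}$-bound $0$ of $\mathcal{A}_{2}$, the Poincar\'e inequality, and compact resolvent; in exchange it is robust, works verbatim for genuinely nonlinear Lipschitz $F$, and delivers a dimension bound that can be nontrivial. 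Both proofs land on the same abstract Theorem~\ref{finit fractal}; they differ in where the compactness is hidden --- for you in the smoothing $S(t):X\to Y$, for the paper nowhere (the $\mu$-term vanishes).
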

\begin{proof} Let $u_0,v_0\in \mathcal{M}$, and $\lambda:=\lambda(\mathcal{M})>0$ such that $e^{-\lambda t}\leq \delta$ for $0<\delta<\frac{1}{2\sqrt{e}}$ and $t>0$.  Writing equation \eqref{Eq 1} in the equivalent form \eqref{equation 5}, implies that the semiflow starting from $u_0$ and $v_0$ satisfies
	\begin{eqnarray*}
		& &\Vert \mathcal{U}(t)u_0-\mathcal{U}(t)v_0\Vert\\  &\leq&  e^{-\lambda t}\Vert u_0-v_0\Vert+\displaystyle\int_{0}^{t}e^{-\lambda (t-s)}\Vert F_\lambda(s,\mathcal{U}(s)u_0)-F_\lambda(s,\mathcal{U}(s)v_0)\Vert \, ds,
	\end{eqnarray*} 
	where $F_\lambda(t,v)=F(t,v)+\lambda v$ for $v\in (\mathbb{L}^{2}(\Omega))^{s}_{+}$.
	Since $u_0$ and $v_0$ are in $\mathcal{M}$ which is invariant under $(\mathcal{U}(t))_{t\geq 0}$, it follows that there exists $C:=C(\mathcal{M})>0$ such that 
	\begin{equation*}
	\Vert F_\lambda(s,\mathcal{U}(s)u_0)-F_\lambda(s,\mathcal{U}(s)v_0)\Vert\leq (\lambda+C)\Vert \mathcal{U}(s)u_0-\mathcal{U}(s)v_0\Vert.
	\end{equation*}
	Therefore,
	\begin{eqnarray*}
		\Vert \mathcal{U}(t)u_0-\mathcal{U}(t)v_0\Vert & \leq & \delta \Vert u_0-v_0\Vert +\displaystyle\int_{0}^{t}w(t,s)\Vert \mathcal{U}(s)u_0-\mathcal{U}(s)v_0\Vert ds,
	\end{eqnarray*}
	where $w(t,s)=e^{-\lambda (t-s)} (\lambda+C)$ for $t\geq s\geq 0$.
	By Lemma \ref{lem 2.2}, we get the following
	\begin{eqnarray*}
		 \Vert \mathcal{U}(t)u_0-\mathcal{U}(t)v_0\Vert
		 \!\!\!\!& \leq &\!\!\!\! \delta \exp\left(\displaystyle\int_{0}^{t}\!\left[\lambda+C-(\lambda+C)e^{-\lambda \tau}\!\!\displaystyle\int_{0}^{\tau} \lambda e^{\lambda r}dr\right]\!d\tau\right) \Vert u_0-v_0\Vert\\
		\!\!\!\!&=&\!\!\!\! \delta \exp\left(\displaystyle\int_{0}^{t}\left[\lambda+C-(\lambda+C)e^{-\lambda \tau}\left(e^{\lambda \tau}-1\right)\right]d\tau\right) \Vert u_0-v_0\Vert\\
		\!\!\!\!&=&\!\!\!\! \delta \exp\left((\lambda+C)\displaystyle\int_{0}^{t}e^{-\lambda \tau}d\tau\right) \Vert u_0-v_0\Vert\\
		\!\!\!\!&=&\!\!\!\! \delta \exp\left(\dfrac{(\lambda+C)}{\lambda}\left[1-e^{-\lambda t}\right]\right) \Vert u_0-v_0\Vert.
	\end{eqnarray*}
	Let $t^{*}>0$ such that $(1-e^{-\lambda t^*})\leq \frac{\lambda}{2(\lambda+C)}$. Then,
	\begin{equation*}
	\Vert \mathcal{U}(t^{*})u_0-\mathcal{U}(t^{*})v_0\Vert  \leq  \delta\sqrt{e} \Vert u_0-v_0\Vert \hspace{0.1cm} \text{ for all } \hspace{0.1cm} u_0,v_0\in \mathcal{M}.
	\end{equation*}
	Since $0<\delta\sqrt{e}<\frac{1}{2}$,  by applying Theorem \ref{finit fractal} for $X=Z=(\mathbb{L}^{2}(\Omega))^{s}$, $Y=(H^{1}(\Omega))^{s}$, $\mathcal{R}=\mathcal{U}(t^*)$, $\mathcal{T}=0$, $\mathcal{B}=\mathcal{M}$, and $\mu=k=1$, we obtain that $\mathcal{M}$ has a finite fractal dimension, moreover, for any $\nu\in (0,\frac{1}{2}-\delta\sqrt{e})$, 
	\begin{equation*}
	\dim_{f}(\mathcal{M})\leq \frac{\ln\left(N_{\nu}\left(B_{H^1}(0,1)\right)\right)}{\ln\left(\frac{1}{2(\delta\sqrt{e}+\nu)}\right)},
	\end{equation*}
	where $N_{\nu}\left(B_{H^1}(0,1)\right)$ is the minimal number of ${\nu}$-balls in $(\mathbb{L}^{2}(\Omega))^{s}$ needed to cover $B_{H^1}(0,1)$ the open unit ball in $(H^{1}(\Omega))^{s}$.
\end{proof}

\section{Analytical solution of an Advection-Diffusion Equation in 2-D using separation of variables} \label{sec 4}
\noindent

Finding an analytical solution to the Advection-Diffusion-Reaction (ADR) equation, with constant advection and diffusion coefficients, Dirichlet boundary conditions, and an initial condition, which considers the reaction in its general form, is often infeasible. This is due to the non-linearity added by the reaction term $R$, the sum of reactions that are products of concentrations. The analytical solution exists for certain forms of $R$ that can be simplified. In the literature, the Laplace Transform method is used to solve the ADR equation, with constant advection and diffusion coefficients, Dirichlet boundary conditions, and an initial condition, with a constant first-order reaction and a source function \cite{KIM}. Green’s function is used when partial differential equations are linear, and the integral involving the reaction term in the Green function approach converges \cite{Parhizi}. The analytical solution of the Advection-Diffusion equation, with constant advection and diffusion coefficients, Dirichlet boundary conditions, and an initial condition, using the separation of variables approach while considering there is no reaction between species, is revisited in this section. This analytical solution is one of a subset of ADR equations, with constant advection and diffusion coefficients, Dirichlet boundary conditions, and an initial condition.
\subsection{Analytical solution in 2-D}
\noindent

The solution of the following two-dimensional advection-diffusion equation with constant advection and diffusion coefficients:

\begin{equation}
\left\{\begin{array}{l}
\dfrac{\partial c}{\partial t}=k \dfrac{\partial^2 c}{\partial x^2}+k \dfrac{\partial^2 c}{\partial y^2}-u \dfrac{\partial c}{\partial x}-u\dfrac{\partial c}{\partial y},\quad \Omega=[0,1]\times[0,1],\\[10pt]
c_{\mid \partial \Omega}=0, \quad\\[10pt]
c(0,x, y,)=f(x, y). \quad
\end{array}\right.
\label{equation 6}
\end{equation}
is discussed in \cite{Buckman}. The solution is of the separable form
\begin{equation}
c(t,x, y)=X(x) Y(y) T(t).
\label{eq_sep}
\end{equation}
Substituting into equation \eqref{equation 6} and dropping $x, y, t$ terms
$$
X Y T^{\prime}=\left(k X^{\prime \prime}-u X^{\prime}\right) Y T+X\left(k Y^{\prime \prime}-u Y\right) T.
$$
Rearranging leads to
$$
\frac{T^{\prime}}{T}=\frac{k X^{\prime \prime}-u X^{\prime}}{X}+\frac{k Y^{\prime \prime}-u Y^{\prime}}{Y}.
$$
The variables $t$, $x$ and $y$ are supposed independent, then each function variable in the above equation is a constant. Meaning this equation is only possible if each fraction is equal to a number $\lambda$, $\alpha$, and $\beta$ where
$\lambda=\alpha+\beta$. From symmetry, the solutions for the $x$-dependent differential equation and $y$-dependent differential equation will be the same with different constants (eigenvalues). The eigenproblem gets reduced to two first order differential equations with

\begin{gather}
T^{\prime}(t)=\lambda T(t), \quad T(0)=1, \label{equation 7}\\[10pt]
k X^{\prime \prime}(x)-u X^{\prime}(x)=\alpha X(x). \label{equation 8}
\end{gather}
Solving equation \eqref{equation 7}, we get
\begin{equation}
T(t)=e^{\lambda t}. \label{equation t}
\end{equation}
Below the solution of equation \eqref{equation 8} is worked out, and the characteristic equation for equation \eqref{equation 8} is presented:
\begin{gather}
k r^2-u r-\alpha=0;  \hspace{0.1cm} \mu=u^2+4 k \alpha; \hspace{0.1cm}
r_1=\frac{u+\sqrt{u^2+4 k \alpha}}{2 k} ; \hspace{0.1cm} r_2=\frac{u-\sqrt{u^2+4 k \alpha}}{2 k}.
\label {equation 9}
\end{gather}
The solution will depend on the discriminant \eqref{equation 9} of the characteristic equation. The three possible solutions are: 
\begin{gather*}
X(x)=\left\{\begin{array}{l}
E e^{r_1 x}+F e^{r_2 x}, \quad \mu>0 \\
C e^{r_1 x}+D, \quad \mu=0 \\
e^{\frac{u}{2k} x}\left[A \sin \left(\frac{\sqrt{-u^2-4 k \alpha}}{2 k} x\right)+B \cos \left(\frac{\sqrt{-u^2-4 k \alpha}}{2 k} x\right)\right], \quad \mu<0.
\end{array}\right.
\end{gather*}
However the only equation, from which we can construct a non-trivial solution, is when $\mu<0$ and thus $r$ is imaginary:
\begin{equation*}
X(x)=e^{\frac{u}{2 k} x}\left[A \sin \left(\frac{\sqrt{-u^2-4 k \alpha}}{2 k} x\right)+B \cos \left(\frac{\sqrt{-u^2-4 k \alpha}}{2 k} x\right)\right].
\end{equation*}
The following steps determine the solution that satisfies the Dirichlet boundary conditions and then the initial condition. We have,
\begin{align*}
& c(t,0,y)=0\implies X(0)Y(y)T(t)=0,\\
& c(t,1,y)=0\implies X(1)Y(y)T(t)=0.\\ \\
\text{Hence,} \qquad & X(0)=0 \text{ and } X(1)=0,\\
& X(0) = 0 \implies B=0 \hspace{0.1cm}\text{ and }\hspace{0.1cm} X(1)=0 \implies\sin\left(\frac{\sqrt{-u^2-4 k \alpha}}{2 k}\right)=0.\\
\text{Thus,}  \qquad & \frac{\sqrt{-u^2-4 k \alpha}}{2 k}=m \pi \text { for } m=1,2,3, \ldots.
\end{align*}
The solution is reduced to
\begin{align*}
X_m(x)=A_m e^{\frac{u}{2 k} x} \sin (m \pi x) \text { for } m=1,2,3 \ldots,\hspace{0.1cm} \text{ with }\hspace{0.1cm} \alpha_m=-k m^2 \pi^2-\frac{u^2}{4 k}.
\end{align*}
Based on the superposition principle, any linear combination of $X(x)$ is a solution of a linear homogeneous problem. Similarly, the eigenfunctions $X_m(x)$ and their corresponding eigenvalues will have the same form for $Y(y)$. The product of the solutions for the $x$-dependent and $y$-dependent differential equations will take the form of a Fourier sine series, satisfying Dirichlet boundary conditions:

$$
X(x) Y(y)=\sum_{m=1}^{\infty} \sum_{n=1}^{\infty} A_{m, n} e^{\frac{u}{2 k} x} e^{\frac{u}{2 k} y} \sin (m \pi x) \sin (n \pi y). \notag
$$
Now, remains finding the Fourier coefficient $A_{m, n}$ such that this expression holds true under the initial condition. Using the orthogonality properties of the eigenfunctions and by combining equations \eqref{eq_sep}, \eqref{equation t} and the initial condition, we get\\
$$
c(0,x, y)=f(x, y)=X(x) Y(y) T(0)=X(x) Y(y),
$$
which implies that
$$
 f(x, y)=X(x) Y(y)=\sum_{n=1}^{\infty} \sum_{n=1}^{\infty} A_{m, n} e^{\frac{u}{2k} x} e^{\frac{u}{2 k} y} \sin (m \pi x) \sin (n \pi y).
$$
Based on Fubini's Theorem, under the condition of absolute convergence, integrating yields
\begin{align*}
&\int_0^1 \int_0^1 f(x, y) d x d y=\int_0^1 \int_0^1 \sum_{m=1}^{\infty} \sum_{n=1}^{\infty} A_{m, n} e^{\frac{u}{2 k} x} e^{\frac{u}{2k} y} \sin (m \pi x) \sin (n \pi y) d x d y,
\end{align*}
which implies that
\begin{align}
&\int_0^1 \int_0^1 f(x, y) e^{-\frac{u}{2 k} x} e^{-\frac{u}{2 k} y} \sin (l \pi x) \sin (k \pi y) d x d y= \notag\\
&\sum_{m=1}^{\infty} \sum_{n=1}^{\infty} \int_0^1 \int_0^1 A_{m,n} \sin (m \pi x) \sin (n \pi y) \sin (l \pi x) \sin (k \pi y) d x d y,
\label{eq_multiply}
\end{align}
\begin{gather*}
\left\{\begin{array}{l}
\text{when } k,l \neq m,n \implies \eqref{eq_multiply}=0\\
\text{when }k,l= m, n \implies \eqref{eq_multiply}=\frac{1}{4} A_{k, l}.
\end{array}\right.
\end{gather*}
The previous implication occurs when $k,l= m, n$ and stems from:
\begin{align*}
&\sum_{m=1}^{\infty} \sum_{n=1}^{\infty} \int_0^1 \int_0^1 A_{m,n} \sin (m \pi x) \sin (n \pi y) \sin (l \pi x) \sin (k \pi y) d x d y\notag\\
&=\frac{1}{4} A_{k, l} \int_0^1 \int_0^1 2 \sin ^2\left(k \pi x\right) 2 \sin ^2(l \pi y) d x d y\\
& =\frac{1}{4} A_{k, l} \int_0^1(1-\cos (2 k \pi x)) d x \int_0^1(1-\cos (2 k \pi y)) d y \\
& =\frac{1}{4} A_{k, l} \left[\left(x-\left.\frac{\sin (2 k \pi x)}{2 k \pi}\right|_0 ^1\right)\left(y-\left.\frac{\sin (2 l \pi y)}{2 k \pi}\right|_0 ^1\right)\right] \\
& =\frac{1}{4} A_{k,l}.
\end{align*}
Therefore \eqref{eq_multiply} reduces to:
\begin{equation}
A_{m, n}=4 \int_0^1 \int_0^1 f(x, y) e^{-\frac{u}{2 k} x} e^{-\frac{u}{2 k} y} \sin (m \pi x) \sin (n \pi y) d x d y, \quad m, n=1,2,3 \ldots \label{equation 10}
\end{equation}
Now back to the time-dependent solution:
\begin{align*}
T(t)=e^{\lambda t},\quad \lambda_{m,n}=\alpha_m+\beta_n=-k m^2 \pi^2-\frac{u^2}{4 k}-k n^2 \pi^2-\frac{u^2}{4 k}.
\end{align*}
Combining the time-dependent solution with the spatial solution we get the final solution to the advection-diffusion equation:
\begin{equation*}
c(t,x, y) =\sum_{m=1}^{\infty} \sum_{n=1}^{\infty} A_{m, n} e^{\left(-k(m^2+n^2)\pi^2-\frac{u^2}{2 k}\right) t} e^{\frac{u}{2 k}(x+y)} \sin (m \pi x) \sin (n \pi y),
\end{equation*}
where $A_{m, n}$ is given by \eqref{equation 10}.

\subsection{Analytical Example}
\noindent

We will specify the following advection-diffusion equation:
\begin{equation*}
\left\{\begin{array}{l}
\dfrac{\partial c}{\partial t}=0.5 \dfrac{\partial^2 c}{\partial x^2}+0.5 \dfrac{\partial^2 c}{\partial y^2}-5 \dfrac{\partial c}{\partial x}-5 \dfrac{\partial c}{\partial y}, \\[10pt]
c(0,x, y)=f(x, y)=\sin (\pi x) \sin (\pi y).
\end{array}\right.
\end{equation*}
Applying these new advection-diffusion coefficients and initial conditions, the solutions are:
$$
\begin{aligned}
c(t,x,y) =\sum_{m=1}^{\infty} \sum_{n=1}^{\infty} A_{m, n} e^{\left(-0.5\pi^2( m^2 + n^2)-25\right) t} e^{5(x+y)} \sin (m \pi x) \sin (n \pi y),
\end{aligned}
$$
where 
\begin{center}
	$A_{m, n} =4 \displaystyle\int_0^1 \displaystyle\int_0^1 e^{-5(x+y)} \sin (\pi x) \sin (\pi y) \sin (m \pi x) \sin (n \pi y) d x d y$, \quad $m,n\geq 1$.
\end{center}
Graphed in Figure \ref{fig:analytic_sol} are the solutions found using initial conditions and coefficients.
The coefficients of the Fourier sine series can be calculated by computer, in this example taking the first 40 terms. We graphed these solutions using Python 3.10.5 at different times stamps. The analytical solutions diffuse over time and move away from the origin. In the next section, we will discuss a numerical simulation of this same example, and compare the results (convergence and graphs) to these analytical solutions.
\begin{figure}[H]
	\centering
	\includegraphics[scale=0.3]{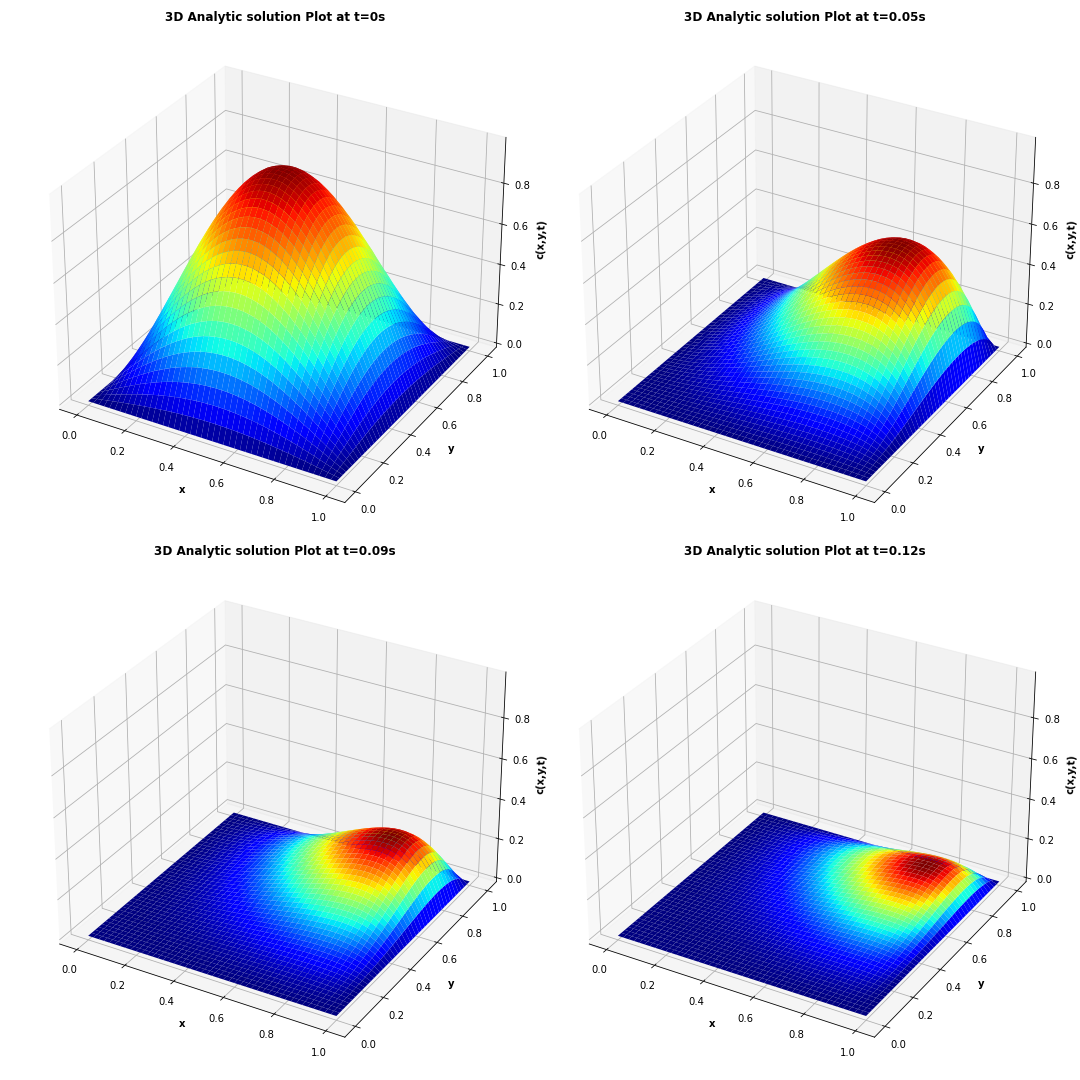}
	\caption{Analytical solutions at times 0, 0.05, 0.09, 0.12, with $u=5$ and $k=0.5$.}
	\label{fig:analytic_sol}
\end{figure}

\section{Numerical Simulations with Finite Difference Methods} \label{sec 5}
\noindent

Numerical methods, such as finite difference, can provide accurate and efficient approximations to the analytic solution, allowing for the study of complex systems and realistic scenarios that do not admit simple analytical solutions. In this section, we will solve numerically the example presented above, and present a new example including the reaction between Ozone, $\mathrm{NO}$ and $\mathrm{NO}_2$ in clean atmosphere. 
\subsection{Numerical simulations in 2-D using explicit centered difference method}
\noindent

Considering the same advection-diffusion equation and initial conditions from the analytic example above, we demonstrate the discretization using the explicit centered difference method. The time derivative is approximated using a Forward Difference scheme, while the advection and diffusion terms are approximated using a Central Difference scheme. Here, $i$ and $j$ represent the spatial indices, and $n$ represents the temporal index. The solution at the grid point $\left(x_i, y_j\right)$ at time $n$ is denoted by $c_{i,j}^n$:
\begin{equation*}
\begin{aligned}
\frac{c_{i, j}^{n+1}-c_{i, j}^n}{\delta t}= & -u \frac{c_{i+1, j}^n-c_{i-1, j}^n}{2\delta x} -u \frac{c_{i+1, j}^n-c_{i, j-1}^n}{2\delta y}\\
& + k \frac{c_{i+1, j}^n-2 c_{i, j}^n+c_{i-1, j}^n}{\delta x^2} +k \frac{c_{i, j+1}^n-2 c_{i, j}^n+c_{i, j-1}^n}{\delta y^2}.
\end{aligned}
\end{equation*}

\subsubsection{Stability}
\noindent

Consider a uniform grid with constant advection-diffusion coefficients. We introduce the Diffusion number and the Cell Peclet number as follows:
\[
\begin{gathered}
R_x = k \frac{\delta t}{\delta x^2}= R_y = k \frac{\delta t}{\delta y^2}, \quad
P_x = \frac{u \delta x}{2k}, \quad \text{and} \quad P_y = \frac{u \delta y}{2k}.
\end{gathered}\]

The finite difference scheme rearranged gives:
\begin{equation*}
\begin{aligned}
c_{i, j}^{n+1}=c_{i, j}^n+ & R_x\left(c_{i+1, j}^n-2 c_{i, j}^n+c_{i-1, j}^n\right)+R_y\left(c_{i, j+1}^n-2 c_{i, j}^n+c_{i, j-1}^n\right)\\
&-P_x R_x\left(c_{i+1, j}^n-c_{i-1, j}^n\right) -P_y R_y\left(c_{i, j+1}^n-c_{i, j-1}^n\right).
\end{aligned}
\end{equation*}
Grouping like terms,
\begin{equation}
\begin{aligned}
c_{i, j}^{n+1}= & \left(1-2 R_x -2 R_y\right) c_{i, j}^n+ \left(R_x-P_xR_x\right) c_{i+1, j}^n +\left(R_x+P_xR_x\right) c_{i-1, j}^n \\
& + \left(R_y-P_yR_y\right) c_{i, j+1}^n+\left(R_y + P_yR_y \right) c_{i, j-1}^j.
\end{aligned}
\label{equ_8}
\end{equation}
The solution will be stable if the coefficients are all positive. The following constraints on $R_x$ and $P_x$ rise:
$$
1-2 R_x-2 R_y=1-4 R_x>0 \quad or \quad R_x<\frac{1}{4} \quad and \quad
R_x-P_x R_x>0 \quad or \quad P_x<1.
$$
\subsubsection{Convergence}
\noindent

Next, the convergence of the explicit centered method is analyzed using a Taylor expansion for each term:
\begin{align*}
& c_{i, j}^{n+1}=c_{i, j}^n+\left(\frac{\partial c}{\partial t}\right)_{i, j}^n \delta t+\left(\frac{\partial^2 c}{\partial t^2}\right)_{i, j}^n \frac{\delta \mathrm{t}^2}{2 !}+\ldots, \\
& c_{i+1, j}^j=c_{i, j}^n+\left(\frac{\partial c}{\partial x}\right)_{i,j}^n \delta x+\left(\frac{\partial^2 c}{\partial x^2}\right)_{i, j}^n \frac{\delta x^2}{2 !}+\left(\frac{\partial^3 c}{\partial x^3}\right)_{i, j}^n \frac{\delta x^3}{3 !} \ldots, \\
& c_{i-1, j}^n=c_{i, j}^n-\left(\frac{\partial c}{\partial x}\right)_{i, j}^n \delta x+\left(\frac{\partial^2 c}{\partial x^2}\right)_{i, j}^n \frac{\delta x^2}{2 !}-\left(\frac{\partial^3 c}{\partial x^3}\right)_{i, j}^n \frac{\delta x^3}{3 !} \ldots, \\
& c_{i, j+1}^n=c_{i, j}^n+\left(\frac{\partial c}{\partial y}\right)_{i, j}^n \delta y+\left(\frac{\partial^2 c}{\partial y^2}\right)_{i, j}^n \frac{\delta y^2}{2 !}+\left(\frac{\partial^3 c}{\partial y^3}\right)_{i, j}^n \frac{\delta y^3}{3 !} \ldots, \\
& c_{i, j-1}^n=c_{i, j}^n-\left(\frac{\partial c}{\partial y}\right)_{i, j}^n \delta y+\left(\frac{\partial^2 c}{\partial y^2}\right)_{i, j}^n \frac{\delta y^2}{2 !}-\left(\frac{\partial^3 c}{\partial y^3}\right)_{i, j}^n \frac{\delta y^3}{3 !} \ldots.
\end{align*}
Plugging into \eqref{equ_8}, simplifying and dropping the subscripts
$$
\begin{gathered}
\frac{\partial c}{\partial t}=k \frac{\partial^2 c}{\partial x^2}+\mathrm{k} \frac{\partial^2 c}{\partial y^2}-u \frac{\partial c}{\partial t}-u \frac{\partial c}{\partial x}-\frac{\delta t}{2 !}\left(\frac{\partial^2 c}{\partial t^2}\right)-\frac{u \delta x^2}{3 !} \frac{\partial^3 c}{\partial x^3} -\frac{u \delta y^2}{3 !} \frac{\partial^3 c}{\partial y^3}.
\end{gathered}
$$
The leading error terms are
$$
E=-\frac{\delta t}{2!}\left(\frac{\partial^2 c}{\partial t^2}\right)-\frac{u \delta x^2}{3 !} \frac{\partial^3 c}{\partial x^3}-\frac{u \delta y^2}{3 !}\frac{\partial^3 c}{\partial y^3}.
$$
So the error is second order in space and first order in time Error $\sim \delta t, \delta x^2, \delta y^2$.
In addition, as $\delta t, \delta x^2, \delta y^2 \rightarrow 0$ the numerical solution converges to the analytical solution showing consistency.
\subsubsection{Numerical Example}
\noindent

Using the explicit centered method, relatively small step sizes are chosen in the $x$, $y$, and $t$ directions, with $nx=46$ and $ny = 46$ representing the number of grid points in the $x$- and $y$-directions. The final resolution is $\delta x = \delta y = \frac{L}{nx-1} = \frac{L}{ny-1} = 0.02222$, and $\delta t = 0.0001$, which are chosen to verify the stability conditions: $R_x = 0.10125 < \frac{1}{4}$ and $P_x = 0.1112 < 1$. In Figure \ref{fig:numerical_sol}, the numerical solutions diffuse over time and move away from the origin. As shown in the previous section, the numerical solution converges to the analytical solution, as evidenced by the similar graphs, which also show the properties of diffusion and dispersion.

\subsubsection{Error Analysis}
\noindent

Figure \ref{fig:error_plot} shows that the explicit centered numerical scheme accurately approximates the analytical solution, given the coefficients that satisfy the stability conditions of the scheme.
\subsection{Numerical simulations in 3-D using the Explicit Upwind Scheme}
\noindent

Ozone is produced through two reactions involving four species in a clean atmosphere. However, in the presence of pollution, ozone primarily forms as a result of a chemical reaction between $\mathrm{NO}_2$ and $\mathrm{VOC}$, which accounts for 99\% of its formation. The example presented below does not aim to model the reactions in detail but instead seeks to demonstrate the mathematical and physical principles by carrying out simulations in a clean atmosphere.
Consider the following three-dimensional Advection-Diffusion-Reaction equation:
\begin{equation*}
\frac{\partial c}{\partial t} + u_x \frac{\partial c}{\partial x} + u_y \frac{\partial c}{\partial y} + u_z \frac{\partial c}{\partial z} = k_x \frac{\partial^2 c}{\partial x^2} + k_y \frac{\partial^2 c}{\partial y^2} + k_z \frac{\partial^2 c}{\partial z^2} + R(c).
\end{equation*}
Discretization is performed using the explicit First-Order Upwind Scheme. The time derivative is approximated using the Forward Difference method, the advection term is approximated using a One-Sided Difference method (for $u > 0$), and the diffusion term is approximated using the Central Difference method.
\begin{figure}[H]
	\centering
	\includegraphics[scale=0.3]{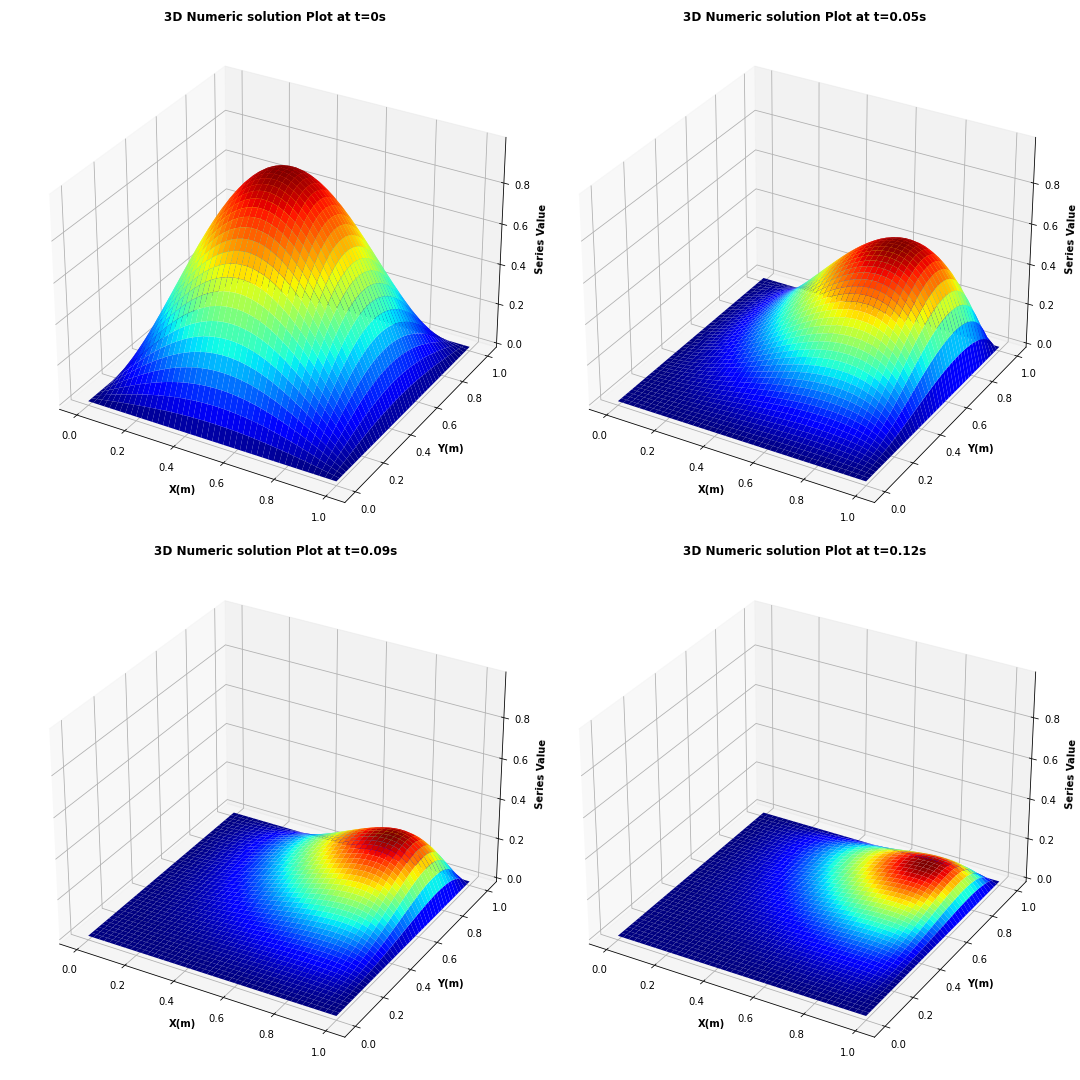}
	\caption{Numerical solutions at times 0, 0.05, 0.09, 0.12, with $u=5$ and $k=0.5$.}
	\label{fig:numerical_sol}
\end{figure}
\noindent
 Here, $i$, $j$, and $k$ represent the spatial indices, and $n$ represents the temporal index. $c_{i,j,k}^n$ denotes the concentration of each pollutant at grid point $\left(x_i, y_j, z_k\right)$ and time $n$.

\begin{eqnarray*}
	 c_{i, j, k}^{n+1} & \!\!\!=\!\!\! & c_{i, j, k}^{n}+\delta t \left[ -u_x \frac{c_{i, j, k}^n -c_{i-1, j, k}^n}{\delta x}-u_y \frac{c_{i, j, k}^n -c_{i, j-1, k}^n}{\delta y}-u_x \frac{c_{i, j, k}^n -c_{i, j, k-1}^n}{\delta z}\right.\\[10pt]
	& & \left. + k_x \frac{c_{i+1, j, k}^n -2c_{i, j, k}^n+ c_{i-1, j, k}^n}{\delta x^2} +k_y \frac{c_{i, j+1, k}^n -2c_{i, j, k}^n+ c_{i, j-1, k}^n}{\delta y^2}\right.\\[10pt]
	& & \left. +k_z \frac{c_{i, j, k+1}^n-2 c_{i, j, k}^n+c_{i, j, k-1}^n}{\delta z^2} +R(c_{i, j, k}^n) \right].
\end{eqnarray*}
Considering 2 reactions between  $\mathrm{NO},\mathrm{NO}_2, \mathrm{O}_3$ and $\mathrm{O}_2$:
\[
\mathrm{NO}_2+\mathrm{O}_2+h v \stackrel{\mathrm{k}_1}{\longrightarrow} \mathrm{NO}+\mathrm{O}_3 \quad  \text{and} \quad
\mathrm{NO}+\mathrm{O}_3 \stackrel{\mathrm{k}_2}{\longrightarrow} \mathrm{O}_2+\mathrm{NO}_2.
\]
These reactions are basic to tropospheric air pollution models. The first reaction is photochemical and states that $\mathrm{NO}$ and $\mathrm{O}_3$ are formed from the photo-dissociation $hv$ caused by solar radiation of $\mathrm{NO}_2$ and $\mathrm{O}_2$. This depends on the time of the day and therefore $k_l = k_l(t)$. Let the concentrations of the pollutants be $c_1=[\mathrm{N O}], c_2=\left[\mathrm{NO}_2\right]$ and $c_3=\left[\mathrm{O}_3\right]$. $\left[O_2\right]$ is constant, and consider a constant source term $\sigma_2$ at the cell $(1,1,1)$ simulating emissions of $\mathrm{NO}$. The speed of reactions are:
\[
\begin{aligned}
& R_1(t,c_1(t))=f\left(t, c_1(t)\right)=\frac{\partial c_1(t)}{\partial t}=k_1(t) c_2(t)-k_2 c_1(t) c_3(t)+\sigma_2, \\
& R_2(t,c_2(t))=f\left(t, c_2(t)\right)=\frac{\partial c_2(t)}{\partial t}=k_2 c_1(t) c_3(t)-k_1(t) c_2(t), \\
& R_3(t,c_3(t))=f\left(t, c_3(t)\right)=\frac{\partial c_3(t)}{\partial t}=k_1(t) c_2(t)-k_2 c_1(t) c_3(t).
\end{aligned}
\]
The system of ODEs can be written in the following form:
\[
R(t,c(t))=c^{\prime}(t)=S g(t, c(t)), \quad c(0) \text { is given },
\]
where $S=\left(r_{i j}-l_{i j}\right)$ is an $s \times r$ matrix, of the loss and gain of the number of molecules $U_i$ in the $j^{th}$ reaction, where $s$ is the number of species, $r$ is the number of reactions, $g(t, c)=\left(g_j(t, c)\right) \in \mathbb{R}^r$,
\[g(t, c)=\left(\begin{array}{c}
	k_1(t) c_2(t) \\
	k_2 c_1(t) c_3(t) \\
	\end{array}\right), \quad S=\left(\begin{array}{rrr}
	1 & -1  \\
	-1 & 1 \\
	1 & -1
	\end{array}\right).\]
\begin{figure}[htbp]
	\centering
	\includegraphics[scale=0.85]{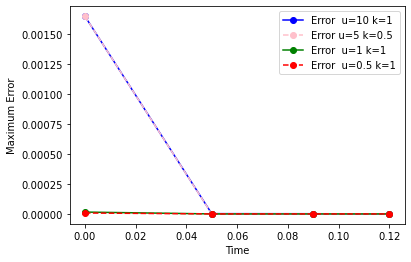}
	\caption{Maximum error between Analytical and Numerical solutions at times 0, 0.05, 0.09, 0.12, with $u=5$, $k=0.5$, and $m=n=40$.}
	\label{fig:error_plot}
\end{figure}
\noindent
\hspace{-0.12cm}Let \( c(0,(1,1,1)) = \left( 1.3 \times 10^8, 5.0 \times 10^{11}, 8.0 \times 10^{11} \right)^T \) be the initial values of the concentrations of the three respective pollutants, in molecules per \( \text{cm}^3 \), at the initial cell \( (1,1,1) \) for each time-step \( \delta t = 1 \, \text{s} \). The reaction coefficient \( \sigma_2 = 10^6 \) is given, and the reaction coefficients are as follows:  
\[
k_2 = 10^{-16}, \quad k_1(t) = \begin{cases} 
10^{-5} e^{7 \sec (t)} & \text{in daytime}, \\
10^{-40} & \text{during nighttime},
\end{cases}
\]  
where
\[
\sec (t) = \left( \sin \left( \frac{1}{16} \pi \left( \bar{t}_h - 4 \right) \right) \right)^{0.2}, \quad \bar{t}_h = t_h - 24 \left\lfloor \frac{t_h}{24} \right\rfloor, \quad t_h = \frac{t}{3600}.
\]  
Daytime is defined between 4 am and 8 pm, and \( \left\lfloor \frac{t_h}{24} \right\rfloor \) represents the largest integer less than or equal to \( \frac{t_h}{24} \). The function \( \sec(t) \) is periodic with a period of 24 hours but is defined only during daytime. The maximum value for \( k_1 \) occurs at noon, with \( k_1 \approx 0.01 \). The solution will be stable with the following constraint:
\[
2 R_x+2 R_y+2R_z+P_xR_x+P_yR_y+P_zR_z<1,
\]
where
\[
R_x = \frac{k_x \delta t}{\delta x^2}, \quad R_y = \frac{k_y \delta t}{\delta y^2}, \quad R_z = \frac{k_z \delta t}{\delta z^2},
\]
and
\[
P_x = \frac{u_x \delta x}{k}, \quad P_y = \frac{u_y \delta y}{k}, \quad P_z = \frac{u_z \delta z}{k}.
\]
The computational parameters for our simulations are as follows:
\[
\begin{aligned}
& \delta t=1s,\\
& \delta x=\delta y=\delta z=10m.\\
& \text{Initial conditions}=c(0,(1,1,1))=\left(1.3 \times 10^8, 5.0 \times 10^{11}, 8.0 \times 10^{11}\right)^T.\\
& \text{Boundary conditions: }c=(0,0,0)^T \text{at $x$, $y$ or $z = 0 m$ and at $x, y$ or $z = 1000 m$},\\
& u_x=u_y=u_z=1m/s,\\
& k_x=k_y=k_z=0.00002 m^2/s.\\
\end{aligned}
\]
For \( \delta x = \delta y = \delta z = 10 \, \text{m} \), each cell's volume is \( 10 \, \text{m}^3 \). We consider a time step of \( 1 \, \text{s} \) due to the stability constraint of the First-Order Upwind Scheme, known as the CFL (Courant-Friedrichs-Lewy) condition:  
\[
\max \left( \frac{u_x \delta t}{\delta x}, \frac{u_y \delta t}{\delta y}, \frac{u_z \delta t}{\delta z} \right) \leq \alpha,
\]
where \( \alpha \) is a certain value less than $1$. Dirichlet boundary conditions are applied at \( x, y, z = 0 \) and \( 1000 \, \text{m} \). Figures \ref{fig:example}-\ref{fig:example3} display horizontal slice plots at \( z=1 \) showing the distribution of pollutant concentrations of \( \mathrm{NO}_2 \), \( \mathrm{O}_3 \), and \( \mathrm{NO} \) at times \( 0 \, \text{s}, 20 \, \text{s}, 50 \, \text{s}, 100 \, \text{s}, 200 \, \text{s}, 300 \, \text{s}, 400 \, \text{s}, 500 \, \text{s}, \) and \( 600 \, \text{s} \), with \( u_x=u_y=u_z = 1 \, \text{m/s} \) and \( k_x=k_y=k_z = 0.00002 \, \text{m}^2/\text{s} \). The plots reveal the concentration distribution of the dispersed gases across the horizontal plane at \( z=1 \), highlighting areas of high and low concentration. These plots illustrate regions where the gas is more or less prevalent. The multiple slices at different time steps demonstrate how gas concentrations evolve over time due to advection, diffusion, and reactions. With \( u_x = u_y = u_z = 1 \, \text{m/s} \), air currents significantly impact gas transport. The concentration gradients show how gases are transported from regions of higher concentration to lower concentration due to advection. Diffusion is evident in the gradual spreading of gas concentrations away from their sources. In Figure \ref{fig:example}, the maximum concentration of \( \mathrm{NO}_2 \) decreases from \( 5 \times 10^{18} \, \text{molecules}/10 \, \text{m}^3 \) at \( t = 0 \, \text{s} \) to \( 74.385 \, \text{mol.}/10 \, \text{m}^3 \) at \( 200 \, \text{s} \), \( 0.078 \, \text{mol.}/10 \, \text{m}^3 \) at \( 300 \, \text{s} \), and \( 1.57 \times 10^{-19} \, \text{mol.}/10 \, \text{m}^3 \) at \( 600 \, \text{s} \). In Figure \ref{fig:example2}, the maximum concentration of \( \mathrm{O}_3 \) decreases from \( 8 \times 10^{18} \, \text{mol.}/10 \, \text{m}^3 \) at \( t = 0 \, \text{s} \) to \( 119.017 \, \text{mol.}/10 \, \text{m}^3 \) at \( 200 \, \text{s} \), \( 0.126 \, \text{mol.}/10 \, \text{m}^3 \) at \( 300 \, \text{s} \), and \( 2.52 \times 10^{-19} \, \text{mol.}/10 \, \text{m}^3 \) at \( 600 \, \text{s} \). In Figure \ref{fig:example3}, the maximum concentration of \( \mathrm{NO} \) starts at \( 1.3 \times 10^{15} \, \text{mol.}/10 \, \text{m}^3 \) at \( t = 0 \, \text{s} \) and decreases to \( 3.3 \times 10^{13} \, \text{mol.}/10 \, \text{m}^3 \) at \( 200 \, \text{s} \), remaining approximately constant at \( 300 \, \text{s} \) and \( 600 \, \text{s} \). In Figure \ref{fig:example3}, a constant source of \( \mathrm{NO} \) is present at cell \( (1, 1, 1) \). This explains why the concentration plot exhibits a relatively uniform concentration, with distinct variations around the source point due to interactions and dispersion processes. Close to the source point, higher concentrations are observed due to the immediate release of \( \mathrm{NO} \).
\begin{landscape}
	\begin{figure}[H]
		\centering
		\includegraphics[width=1.213\textwidth]{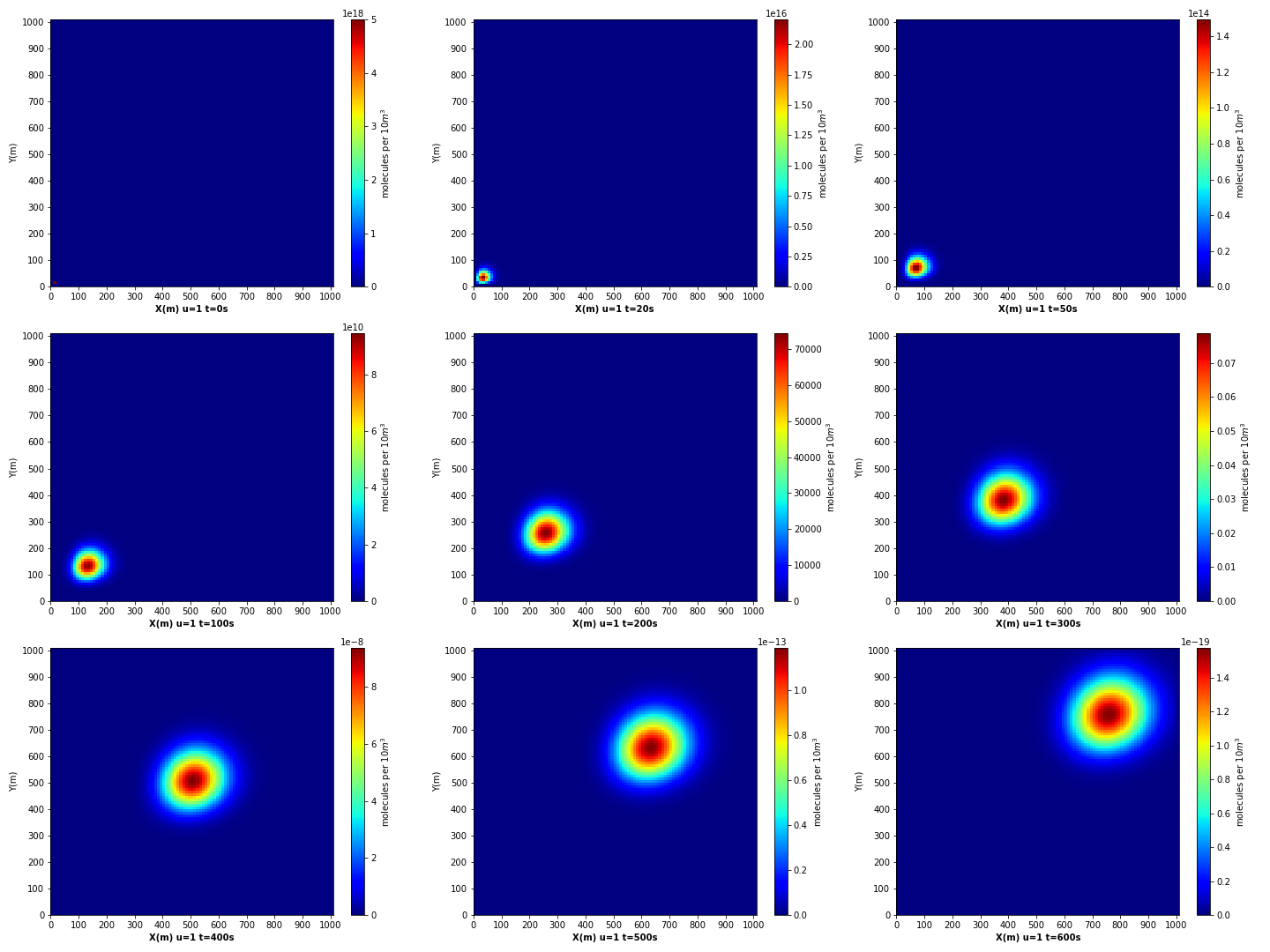}
		\caption{Horizontal slice plots at \( z = 1 \) showing \( \mathrm{NO}_2 \) concentrations at times \( 0 \, \text{s}, 20 \, \text{s}, 50 \, \text{s}, 100 \, \text{s}, 200 \, \text{s}, 300 \, \text{s}, 400 \, \text{s}, 500 \, \text{s}, \) and \( 600 \, \text{s} \), with \( u = 1 \, \text{m/s} \) and \( k = 0.00002 \, \text{m}^2/\text{s} \).}
		\label{fig:example}
	\end{figure}
	
	\begin{figure}[H]
		\centering
		\includegraphics[width=1.213\textwidth]{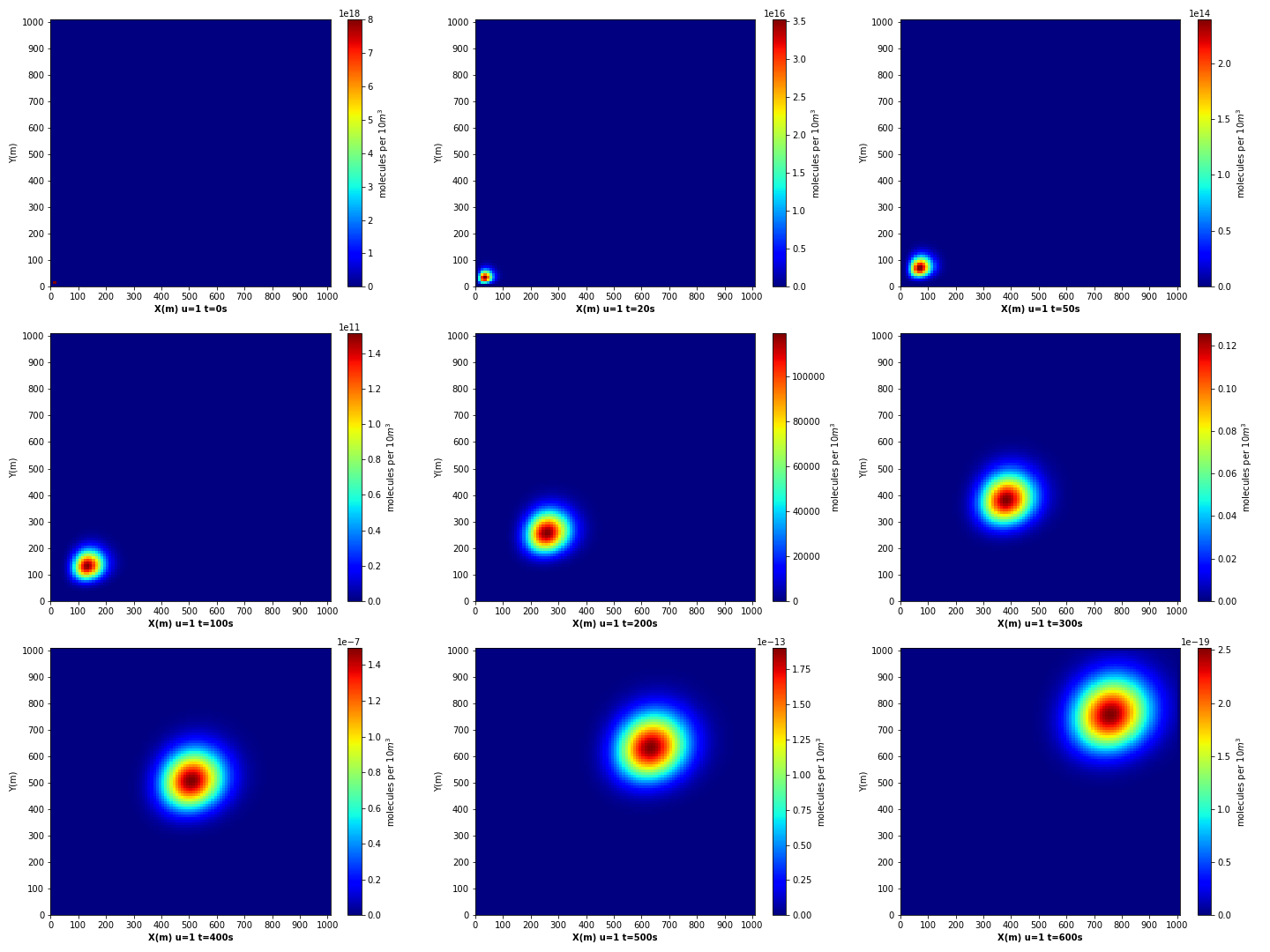}
		\caption{Horizontal slice plots at \( z=1 \) showing \( \mathrm{O}_3 \) concentrations at times \( 0 \, \text{s}, 20 \, \text{s}, 50 \, \text{s}, 100 \, \text{s}, 200 \, \text{s}, 300 \, \text{s}, 400 \, \text{s}, 500 \, \text{s}, \) and \( 600 \, \text{s} \), with \( u = 1 \, \text{m/s} \) and \( k = 0.00002 \, \text{m}^2/\text{s} \).}
		\label{fig:example2}
	\end{figure}
	
	\begin{figure}[H]
		\centering
		\includegraphics[width=1.213\textwidth]{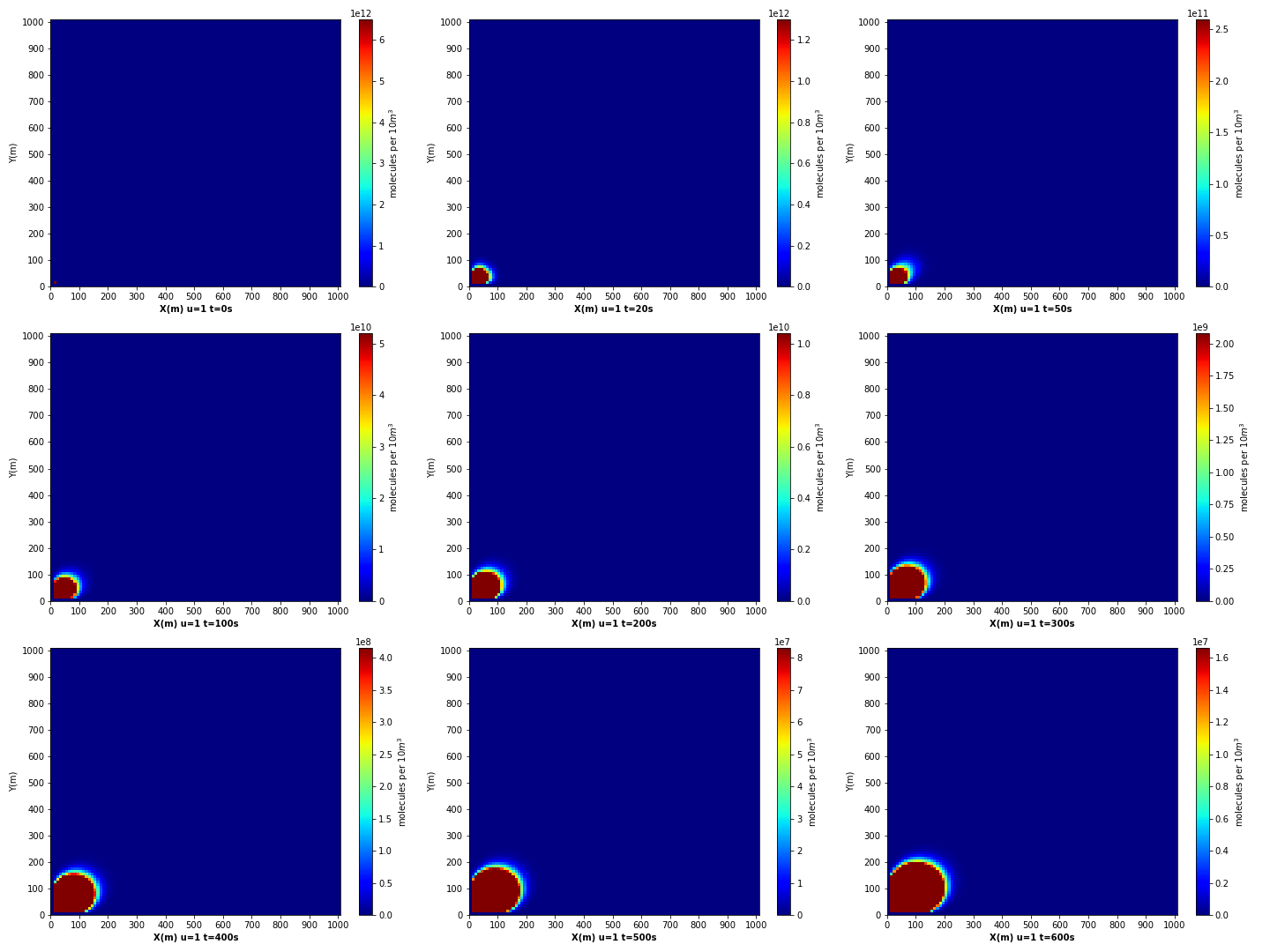}
		\caption{Horizontal slice plots at \( z = 1 \) showing \( \mathrm{NO} \) concentrations at times \( 0 \, \text{s}, 20 \, \text{s}, 50 \, \text{s}, 100 \, \text{s}, 200 \, \text{s}, 300 \, \text{s}, 400 \, \text{s}, 500 \, \text{s}, \) and \( 600 \, \text{s} \), with \( u = 1 \, \text{m/s} \) and \( k = 0.00002 \, \text{m}^2/\text{s} \).}
		\label{fig:example3}
	\end{figure}
\end{landscape}
\subsection{Attractor visualization}
\noindent 

In Figure \ref{Attractor2}, a compelling observation arises as we discern the convergence of all concentrations towards a shared region, accompanied by noticeable oscillations within its confines. This distinct region, identified as our attractor, stands out prominently against the backdrop of surrounding data points. Remarkably, it encapsulates the trajectories of the concentrations of each species, in each grid cell, in function of the other two species through time, serving as a central hub for their dynamic behavior. This emphasizes the pivotal role played by the attractor in dictating the enduring patterns and long-term dynamics of the system, shedding light on its significant influence on the overall system behavior.
\begin{figure}[H]
	\centering
	\includegraphics[scale=0.29]{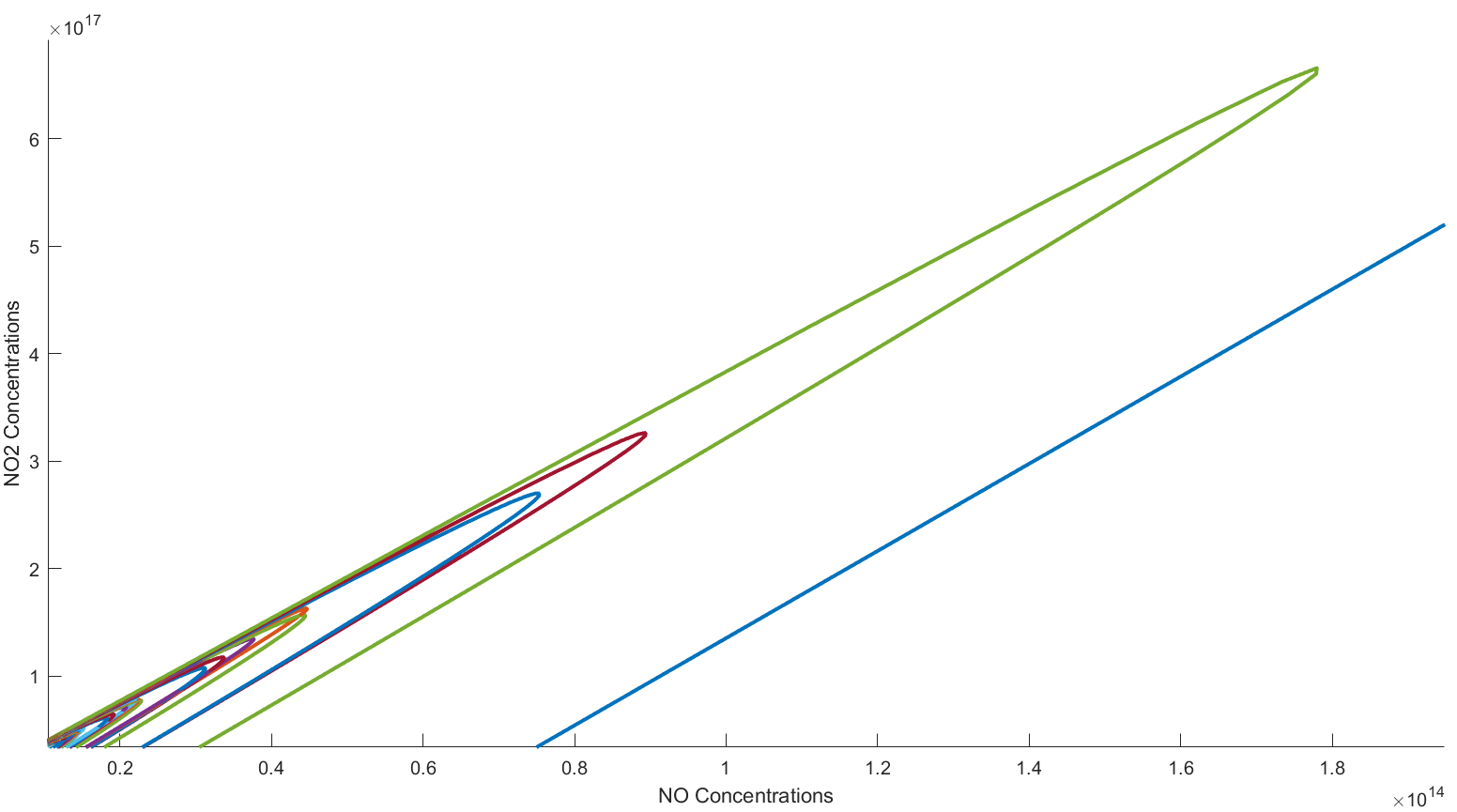}\\
	
	\vspace{2cm}
	\includegraphics[scale=0.29]{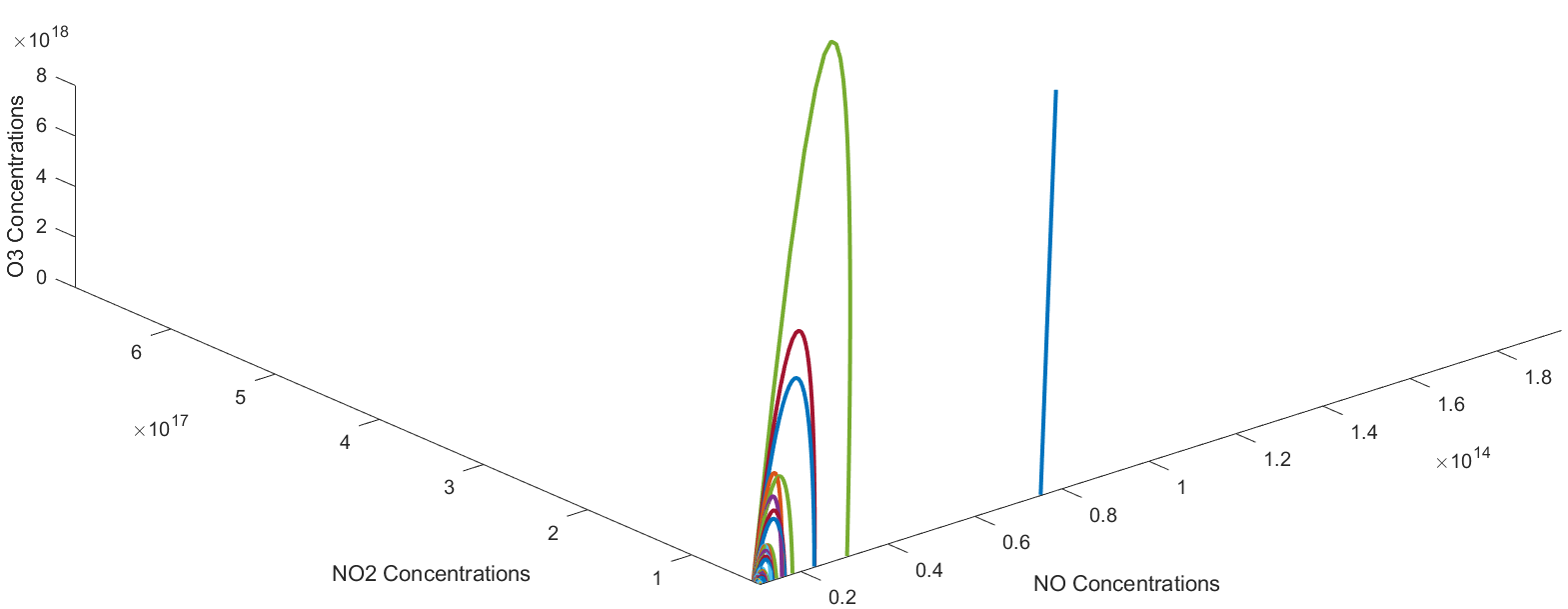}
	\caption{The long-term evolution of the trajectories of the concentration of each species in each cell of the grid.}
	\label{Attractor2}
\end{figure}
\section{Conclusion}
\noindent

In this work, several results concerning the existence, uniqueness, and positivity of solutions as well as the asymptotic behavior through a finite fractal dimensional global attractor for an Advection-Diffusion-Reaction (ADR) equation, have been established. To achieve this goal, various results from semigroup and global attractors theories were used. An analytical solution in 2-D was presented and numerical simulations were conducted using two Explicit Finite Difference schemes: the Explicit Centered scheme to solve the two-dimensional Advection-Diffusion equation and the First Order Upwind numerical scheme to solve the three-dimensional Advection-Diffusion-Reaction equation, which includes reactions involving $\mathrm{NO}$, $\mathrm{NO}_2$, $\mathrm{O}_3$, and $\mathrm{O}_2$, with specified initial and boundary conditions. The resulting plots illustrate the solutions of the two-dimensional equation and later the concentration distribution of the dispersed gases across the horizontal plane at $z=1$.

\section*{CRediT Authors Contribution Statement}
\begin{description}
	\item[M. ELGHANDOURI:] Mathematical proofs, manuscript writing, review, and editing.
	\item[K. EZZINBI:] Mathematical methodology and supervision, manuscript review.
	\item[L. SAIDI:] Analytical solution, numerical simulation, and manuscript writing.
	\item[M. ELGHANDOURI \& L. SAIDI:] Discussion and attractor visualization.
\end{description}

\section*{Acknowledgments}\noindent
The research described in this article was made possible by the support of the I-MAROC project. The authors would like to thank the anonymous referees for their careful reading of the manuscript and their insightful feedback.

\section*{Financial disclosure.}\noindent
The authors assert that they do not possess any identifiable conflicting financial interests or personal relationships that might have seemed to impact the research presented in this paper.

\section*{Conflict of interest.}\noindent
This work does not have any conflicts of interest.


\begin{thebibliography}{00}
\bibitem{Batkai}
A. Batkai, M. K. Fijavz, et A. Rhandi. Positive operator semigroups. Operator Theory: advances and applications, 2017, vol. 257.

\bibitem{Bear}
J. Bear. Hydraulics of Groundwater, Dover, Minneola, 2007.
\bibitem{Bainov}
D. D. Bainov and P. S. Simeonov. Integral inequalities and applications, vol. 57, Springer Science and Business Media, 2013.

\bibitem{Buckman}
N. M. Buckman. The linear convection-diffusion equation in two dimensions.

\bibitem{Burden}
Burden, R., Faires, J., Burden, A.: Numerical analysis, 10th edn. Cengage Learning Inc., Boston (2015).

\bibitem{Czaja}
R. Czaja et M. Efendiev. A note on attractors with finite fractal dimension. Bulletin of the London Mathematical Society, 2008, vol. 40, no 4, \url{https://doi.org/10.1112/blms/bdn044}.

\bibitem{ELAMVAZHUTHI}
K. Elamvazhuthi, H. Kuiper, M. Kawski, et al. Bilinear controllability of a class of advection–diffusion–reaction systems. IEEE Transactions on Automatic Control, 2018, vol. 64, no 6, p. 2282-2297, \url{https://doi.org/10.1109/TAC.2018.2885231}.

\bibitem{Efendiev}
M. Efendiev, Finite and Infinite Dimensional Attractors for Evolution Equations of Mathematical Physics, Gakuto International Series. Mathematical Sciences and Applications, vol. 33, Gakkotosho Co., Ltd., Tokyo, 2010.

\bibitem{Genuchten}
M. T. Van Genuchten, W. J. Alves. Analytical Solutions of the One-Dimensional Convective-Dispersive Solute Transport Equation, U.S. Department of Agriculture, Agricultural Research Service Technical Bulletin, vol. 1661: Government Printing Office; 1982.

\bibitem{Gustafson}
B. Gustafson, H. Kreiss, and J. Oliger. Time Dependent Problems and Difference Methods, John Wiley \& Sons, New York, 1995.

\bibitem{Hale}
J. Hale. Asymptotic behavior of dissipative systems (providence, ri: American mathematical society) go to reference in article (1988).

\bibitem{Hetrick}
D. K. Hetrick. Dynamics of Nuclear Reactors, University of Chicago, Chicago, 1971.

\bibitem{Hundsdorfer}
W. H. Hundsdorfer, and G. J. Verwer. Numerical solution of time-dependent advection-diffusion-reaction equations. Vol. 33. Berlin: Springer, 2003.

\bibitem{KIM}
Kim, A.S. Complete analytic solutions for convection-diffusion-reaction-source equations without using an inverse Laplace transform. Sci Rep 10, 8040 (2020), \url{https://doi.org/10.1038/s41598-020-63982-w}.

\bibitem{LANSER}
D. Lanser et G. J. Verwer. Analysis of operator splitting for advection–diffusion–reaction problems from air pollution modelling. Journal of computational and applied mathematics, 1999, vol. 111, no 1-2, p. 201-216, \url{https://doi.org/10.1016/S0377-0427(99)00143-0}.

\bibitem{Lewis}
P. E. Lewis and J. P. Ward. The Finite Element Method: Principles and Applications (Addison-Wesley, Wokingham, 1991).

\bibitem{MCLEAN}
W. Mclean, K. Mustapha, R. Ali et al. Well-posedness of time-fractional advection-diffusion-reaction equations. Fractional Calculus and Applied Analysis, 2019, vol. 22, no 4, p. 918-944, \url{https://doi.org/10.1515/fca-2019-0050}.

\bibitem{Mphephu}
N. Mphephu. Numerical solution of 1-D convection-diffusion-reaction equation. M.S. thesis, University of Venda, African Institute for Mathematical Sciences, 2013.

\bibitem{Murray}
J. D. Murray. Mathematical Biology I, Springer-Verlag, Berlin, 2002.

\bibitem{Parhizi}
Parhizi M., Kilaz G., Ostanek J. K., Jain A., Analytical solution of the convection-diffusion-reaction-source (CDRS) equation using Green's function technique, International Communications in Heat and Mass Transfer, Volume 131, 2022, 105869, ISSN 0735-1933,
\url{https://doi.org/10.1016/j.icheatmasstransfer.2021.105869}.

\bibitem{Pazy}
A. Pazy. Semigroups of linear operators and applications to partial differential equations. Vol. 44. Springer Science \& Business Media, 2012.

\bibitem{Peng}
L. Peng, Y. Zhou. The existence of mild and classical solutions for time fractional Fokker–Planck equations. Monatsh Math 199, 377–410 (2022), \url{https://doi.org/10.1007/s00605-022-01710-4}.

\bibitem{Polyanin}
A. D. Polyanin, V. E. Nazaikinskii. Handbook of Linear Partial Differential Equations for Engineers and Scientist. CRC Press Taylor \& Francis Group LLC; 2016.

\bibitem{Schmidt}
H. Schmidt, C. Derognat, R. Vautard et al. A comparison of simulated and observed ozone mixing ratios for the summer of 1998 in Western Europe. Atmospheric Environment, 2001, vol. 35, no 36, p. 6277-6297, \url{https://doi.org/10.1016/S1352-2310(01)00451-4}.

\bibitem{Shih}
T. M. Shih. Numerical Heat Transfer, Springer-Verlag, Berlin, 1984.

\bibitem{Shu}
C. W. Shu. TVD time discretizations, SIAM J. Sci. Stat. Comput., 9 (1988), pp. 1073-1084, \url{https://doi.org/10.1137/090907}.

\bibitem{Steinfeld}
J. I. Steinfeld. Atmospheric chemistry and physics: from air pollution to climate change. Environment: Science and Policy for Sustainable Development, 1998, vol. 40, no 7, p. 26-26, \url{https://doi.org/10.1080/00139157.1999.10544295}.

\bibitem{Zavaleta}
A. U. Zavaleta, A. L. de Bortoli, M. Thompson. Analysis and simulation for a system of chemical reaction equations with a vortex formulation, Applied Numerical Mathematics, Volume 47, Issues 3–4, 2003, Pages 559-573, \url{https://doi.org/10.1016/S0168-9274(03)00083-7}.

\bibitem{ZHANG}
Z. Zhang, M. V. Tretyakov, B. Rozovskii et al. Wiener chaos versus stochastic collocation methods for linear advection-diffusion-reaction equations with multiplicative white noise. SIAM Journal on Numerical Analysis, 2015, vol. 53, no 1, p. 153-183, \url{https://doi.org/10.1137/130932156}.

\bibitem{WHEELER}
M. F. Wheeler, et C. N. Dawson. An operator-splitting method for advection-diffusion-reaction problems. 1987.
\end{thebibliography}
\end{document}